\documentclass[a4paper,twoside,11pt]{article}
\usepackage[utf8]{inputenc}
\usepackage[english]{babel}
\usepackage{amssymb, amsfonts, amsthm, amsmath}
\usepackage[hidelinks]{hyperref}
\usepackage{pgf,tikz} 
\usetikzlibrary{math,arrows,calc,through}
\usepackage[margin=2.5cm]{geometry} 
\usepackage[mathscr]{euscript}
\usepackage{graphicx,accents}
\usepackage{placeins}
\usepackage{caption} 
\usepackage{mathtools} 
\usepackage{subcaption}
\usepackage{enumitem}
\usepackage{bm} 
\usepackage[]{changepage} 

\numberwithin{equation}{section}


\theoremstyle{plain}
\newtheorem{theorem}{Theorem}[section]
\newtheorem{corollary}[theorem]{Corollary}
\newtheorem{lemma}[theorem]{Lemma}

\theoremstyle{definition}
\newtheorem{definition}[theorem]{Definition}
\newtheorem{remark}[theorem]{Remark}

\theoremstyle{remark}

\newenvironment{statement}
{	\itshape
	\vspace{1mm}
	\begin{adjustwidth}{5mm}{5mm}
		\begin{center}
			\large
}
{ 
		\end{center}
	\end{adjustwidth}
	\vspace{1mm}
}

\newcommand{\N}{\mathbb{N}}
\newcommand{\Z}{\mathbb{Z}}
\newcommand{\R}{\mathbb{R}}

\newcommand{\RP}{{\mathbb{R}\mathrm{P}}}


\DeclareMathOperator{\cro}{cr} 

\newcommand{\pah}[1]{\raisebox{0pt}[0pt][0pt]{$#1^\vee_{\scalebox{0.6}[1]{$\bm{-}$}}$}}
\newcommand{\pav}[1]{#1^\vee_{\,\bm{\shortmid}}}
\newcommand{\pa}[1]{#1^\vee}

\newcommand{\inc}{\mathcal{C}}

\newcommand{\p}{\varphi}

\DeclareMathOperator{\North}{
	\begin{tikzpicture}[scale=1.4, line cap=round]
		\draw (0,0) -- (0,1ex) -- (1ex,1ex) -- (1ex,0) -- (0,0);
		\fill (0ex,0.5ex) rectangle ++(1ex,0.5ex);
	\end{tikzpicture}\kern-1pt
}
\DeclareMathOperator{\East}{
	\begin{tikzpicture}[scale=1.4, line cap=round]
		\draw (0,0) -- (0,1ex) -- (1ex,1ex) -- (1ex,0) -- (0,0);
		\fill (0.5ex,0ex) rectangle ++(0.5ex,1ex);
	\end{tikzpicture}\kern-1pt
}
\DeclareMathOperator{\South}{
	\begin{tikzpicture}[scale=1.4, line cap=round]
		\draw (0,0) -- (0,1ex) -- (1ex,1ex) -- (1ex,0) -- (0,0);
		\fill (0ex,0ex) rectangle ++(1ex,0.5ex);
	\end{tikzpicture}\kern-1pt
}
\DeclareMathOperator{\West}{
	\begin{tikzpicture}[scale=1.4, line cap=round]
		\draw (0,0) -- (0,1ex) -- (1ex,1ex) -- (1ex,0) -- (0,0);
		\fill (0ex,0ex) rectangle ++(0.5ex,1ex);
	\end{tikzpicture}\kern-1pt
}
\DeclareMathOperator{\NE}{
	\begin{tikzpicture}[scale=1.4, line cap=round]
		\draw (0,0) -- (0,1ex) -- (1ex,1ex) -- (1ex,0) -- (0,0);
		\fill (0.5ex,0.5ex) rectangle ++(0.5ex,0.5ex);
	\end{tikzpicture}\kern-1pt
}
\DeclareMathOperator{\SE}{
	\begin{tikzpicture}[scale=1.4, line cap=round]
		\draw (0,0) -- (0,1ex) -- (1ex,1ex) -- (1ex,0) -- (0,0);
		\fill (0.5ex,0) rectangle ++(0.5ex,0.5ex);
	\end{tikzpicture}\kern-1pt
}
\DeclareMathOperator{\SW}{
	\begin{tikzpicture}[scale=1.4, line cap=round]
		\draw (0,0) -- (0,1ex) -- (1ex,1ex) -- (1ex,0) -- (0,0);
		\fill (0ex,0ex) rectangle ++(0.5ex,0.5ex);
	\end{tikzpicture}\kern-1pt
}
\DeclareMathOperator{\NW}{
\begin{tikzpicture}[scale=1.4, line cap=round]
	\draw (0,0) -- (0,1ex) -- (1ex,1ex) -- (1ex,0) -- (0,0);
	\fill (0ex,0.5ex) rectangle ++(0.5ex,0.5ex);
\end{tikzpicture}\kern-1pt
}
\DeclareMathOperator{\WEmid}{
	\begin{tikzpicture}[scale=1.4, line cap=round]
		\draw (0,0) -- (0,1ex) -- (1ex,1ex) -- (1ex,0) -- (0,0);
		\fill (0.25ex,0ex) rectangle ++(0.5ex,1ex);
	\end{tikzpicture}\kern-1pt
}

\DeclareMathOperator{\NSmid}{
	\begin{tikzpicture}[scale=1.4, line cap=round]
		\draw (0,0) -- (0,1ex) -- (1ex,1ex) -- (1ex,0) -- (0,0);
		\fill (0ex,0.25ex) rectangle ++(1ex,0.5ex);
	\end{tikzpicture}\kern-1pt
}

\DeclareMathOperator{\WE}{
\begin{tikzpicture}[scale=1.4, line cap=round]
	\draw (0,0) -- (0,1ex);%
	\draw (0,1ex) -- (1ex,1ex);%
	\draw (1ex,1ex) -- (1ex,0);%
	\draw (1ex,0) -- (0,0);%
	\draw (0.5ex,0) -- (0.5ex,1ex);%
\end{tikzpicture}\kern-1pt
}

\DeclareMathOperator{\NS}{
\begin{tikzpicture}[scale=1.4, line cap=round]
	\draw (0,0) -- (0,1ex);%
	\draw (0,1ex) -- (1ex,1ex);%
	\draw (1ex,1ex) -- (1ex,0);%
	\draw (1ex,0) -- (0,0);%
	\draw (0ex,0.5ex) -- (1ex,0.5ex);%
\end{tikzpicture}\kern-1pt
}


\setlength{\parindent}{0pt}
\setlength{\parskip}{1ex plus 0.5ex minus 0.2ex}
\setlength{\parskip}{1ex plus 0.2ex minus 0.2ex}

\tikzstyle{bvert}=[draw,circle,fill=black,minimum size=5pt,inner sep=0pt]
\tikzstyle{blvert}=[draw,circle,fill=blue,draw=blue,minimum size=5pt,inner sep=0pt]
\tikzstyle{wvert}=[draw,circle,fill=white,minimum size=5pt,inner sep=0pt]
\tikzstyle{blwvert}=[draw,circle,draw=blue,fill=white,minimum size=5pt,inner sep=0pt]
\tikzset{circle through 3 points/.style n args={3}{%
insert path={let    \p1=($(#1)!0.5!(#2)$),
                    \p2=($(#1)!0.5!(#3)$),
                    \p3=($(#1)!0.5!(#2)!1!-90:(#2)$),
                    \p4=($(#1)!0.5!(#3)!1!90:(#3)$),
                    \p5=(intersection of \p1--\p3 and \p2--\p4)
                    in
                 node at (\p5) [draw,circle through= {(#1)}]{}}
}}

\title{Discrete Kœnigs nets, inscribed quadrics\\ and autoconjugate curves}
\author{Niklas Christoph Affolter
     \thanks{TU Wien, Institute of Discrete Mathematics and Geometry, Wiedener Hauptstr.~8-10/104, A-1040 Vienna, Austria.
       \textit{E-mail address}: \texttt{affolter@posteo.net}} \footnotemark[2]\ ,
      Alexander Yves Fairley
     \thanks{TU Berlin, Institute of Mathematics, Strasse des 17.~Juni 136, 10623 Berlin, Germany.
	\textit{E-mail address}: \texttt{fairley@tu-berlin.de}}
}

\date{October 30, 2025 \vspace{-5mm}}

\begin{document}%

\maketitle%

\begin{abstract}
	Discrete Kœnigs nets are a special class of discrete surfaces that play a fundamental role in discrete differential geometry, in particular in the study of discrete isothermic and minimal surfaces. Recently, it was shown by Bobenko and Fairley that Kœnigs nets can be characterized by the existence of touching inscribed conics. We generalize the touching inscribed conics by showing the existence of higher-dimensional inscribed quadrics for Kœnigs nets. Additionally, we study Kœnigs $d$-grids, which are Kœnigs nets with parameter lines that are contained in $d$-dimensional subspaces. We show that Kœnigs $d$-grids have a remarkable global property: there is a special inscribed quadric that all parameter spaces are tangent to. Finally, we establish a bijection between Kœnigs $d$-grids and pairs of discrete autoconjugate curves.
\end{abstract}

\setcounter{tocdepth}{1}
\tableofcontents

\newpage

\section{Introduction}

In differential geometry, conjugate nets are a special class of parametrized surfaces in $\RP^n$. Intuitively, as explained in \cite{Tzitzeica1924geometrie}, a conjugate net is a parametrized surface such that the coordinate curves dissect the surface into infinitesimal planar quads.
In \emph{discrete differential geometry} (see \cite{BS2008DDGbook} for an introduction), a \emph{Q-net} is a map $P\colon \Z^2 \rightarrow \RP^n$, such that the image of each unit square is contained in a plane \cite{sauer1970differenzen, dsmultidimconjugate, BS2008DDGbook}. Q-nets are also called \emph{discrete conjugate nets} since they constitute an integrable discretization of (smooth) conjugate nets.

In this paper, we study a fundamental reduction of Q-nets known as \emph{(discrete) Kœnigs nets} \cite{BS2009Koenigsnets}. Since we are interested in the discrete theory in this paper, in the following the term ``Kœnigs net'' always refers to the discrete notion. Similarly to the smooth theory, (discrete) Kœnigs nets have received much interest, due to their relation to discrete isothermic nets. Indeed, many discretizations of isothermic nets are Kœnigs nets, including the original \emph{(circular) isothermic nets} \cite{bpdisosurfaces,BS2009Koenigsnets}, \emph{S-isothermic nets} \cite{bpdisosurfaces,bhsminimal}, \emph{S-conical (isothermic) nets} \cite{bhsconical}, as well as the later generalization of S-isothermic nets in \cite{BS2008DDGbook}. Moreover, Kœnigs nets have also been studied in the context of \emph{infinitesimal deformations} \cite{sauer1933wackelige}, discretizations of curvature \cite{bpwcurvature}, \emph{confocal quadrics} \cite{bsstconfocal},   \emph{$\Omega$-nets} \cite{bchjpromega},  \emph{cone-nets} \cite{KMT2023conenets}, the \emph{resistor subvariety} \cite{agprvrc} and \emph{AGAG-webs} \cite{mpagag}. Another interesting aspect of Kœnigs nets is that they have multiple different characterizations by linear conditions \cite{BS2008DDGbook}. For instance, Kœnigs nets are Q-nets such that for each vertex, the intersection points of the diagonals of the four incident faces are coplanar.

Thus, it is somewhat unexpected that Kœnigs nets exhibit a number of non-linear phenomena. The first instance of such a phenomenon is due to Doliwa \cite{doliwa2003}. In his article, he investigated a special class of Q-nets that were later understood to be the diagonal intersection nets of Kœnigs nets \cite{BS2008DDGbook}. For these special Q-nets he showed that one may associate a certain \emph{Kœnigs conic} to each face, and that the existence of these Kœnigs conics is characteristic for these special Q-nets. The second instance came more recently, when Bobenko and Fairley \cite{bobenkofairley2021nets} showed that each Kœnigs net comes with a 1-parameter family of touching (inscribed) conics (see Figure~\ref{figure:touchingconics}) -- a new porism. They also showed that the existence of touching conics is characteristic for Kœnigs nets.

\begin{figure}[tb] 
	\centering
	\includegraphics[width=0.4\textwidth]{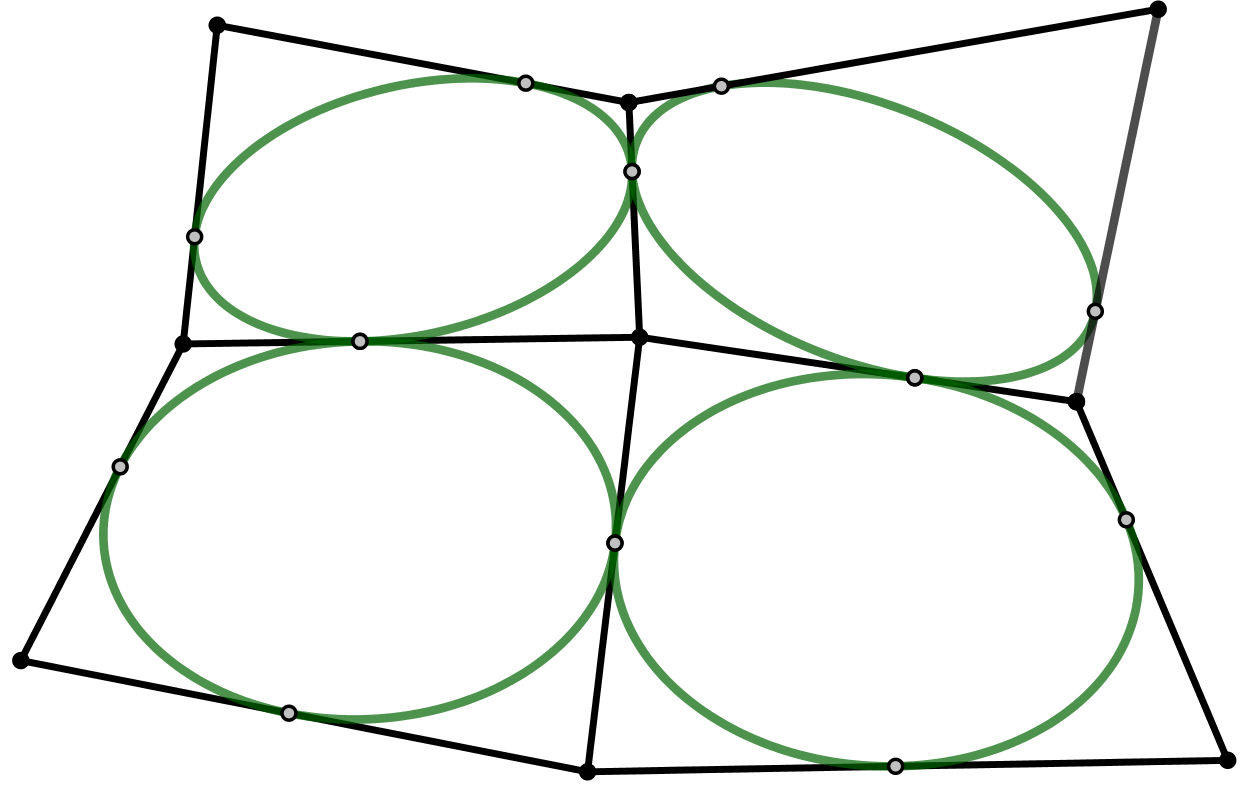}
    \hspace{10mm}
	\includegraphics[width=0.4\textwidth]{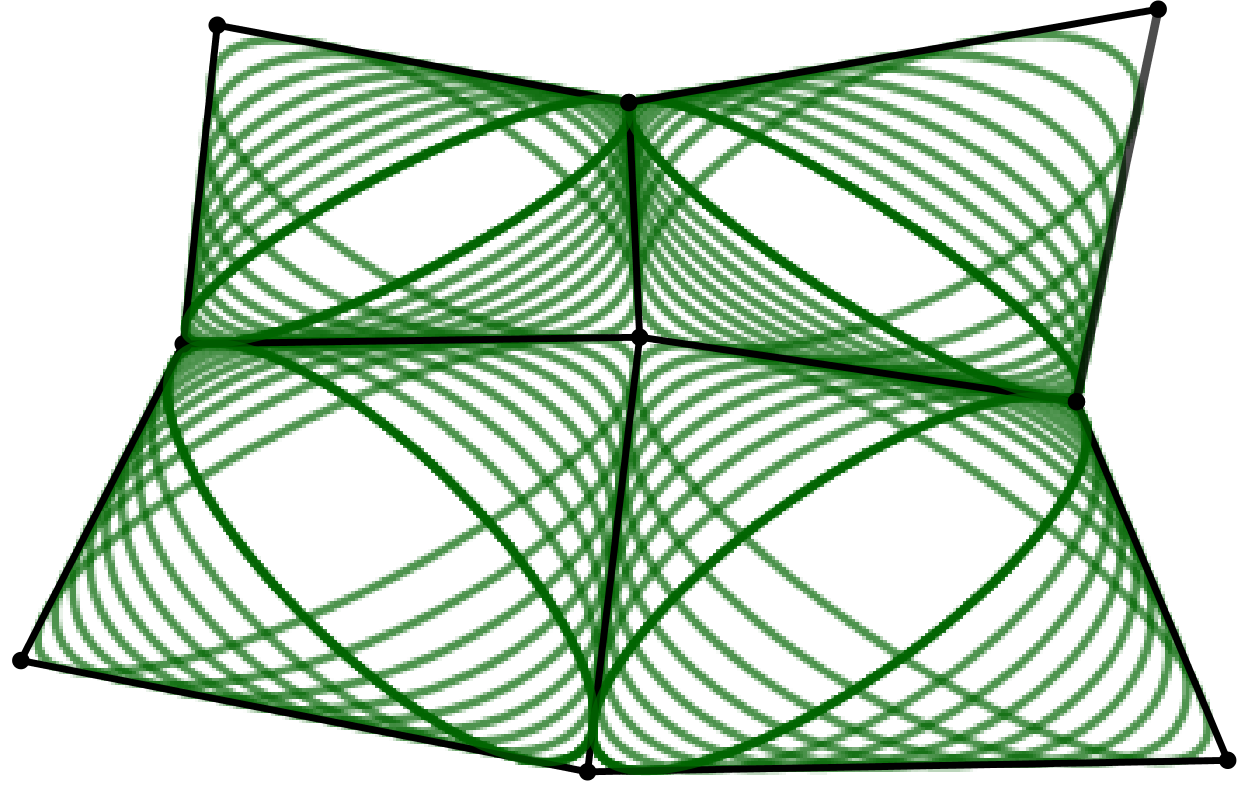}
	\caption{Any Kœnigs net has a $1$-parameter family of touching conics. The existence of one instance of touching conics (as shown on the left) is sufficient to ensure the existence of a $1$-parameter family of touching conics (as shown on the right).} 
	\label{figure:touchingconics}
\end{figure}

Our first contribution is a generalization of touching conics to what we call \emph{inscribed quadrics}. Consider a Kœnigs net $P$ defined on a rectangular subset of $\Z^2$. We call such a Kœnigs net \emph{extensive} if it satisfies a certain genericity condition. Briefly stated, $P$ is extensive if the dimension of the space joined by all the points of $P$ is as large as possible. For an instance of touching conics $\mathcal C$, we define an \emph{inscribed quadric} as a quadric that contains all the conics of $\mathcal C$ and is tangent to the parameter spaces of $P$.
With this definition, our first main result is as follows:
\begin{statement}
	Every extensive Kœnigs net has a 1-parameter family of inscribed quadrics. 
\end{statement}

See Figure~\ref{figure:twoinscribedquadrics} for two examples from the $1$-parameter family of inscribed quadrics for a K{\oe}nigs net in $\RP^3$. 
We also show that every inscribed quadric contains Doliwa's Kœnigs conics. Moreover, despite the extensivity condition, our first main result has implications for non-extensive cases, which we remark upon throughout the paper. 

\begin{figure}
	\centering
	\includegraphics[width=0.47\textwidth]{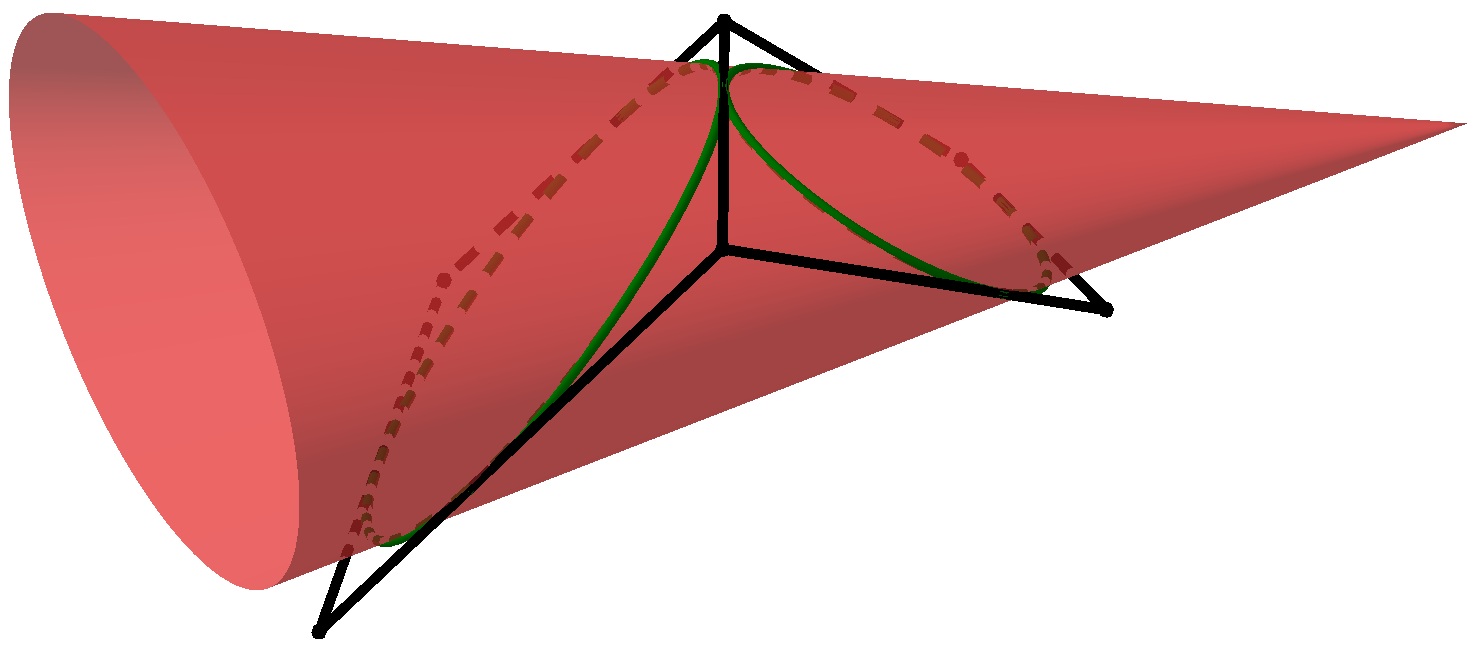}
	\hfill
	\includegraphics[width=0.47\textwidth]{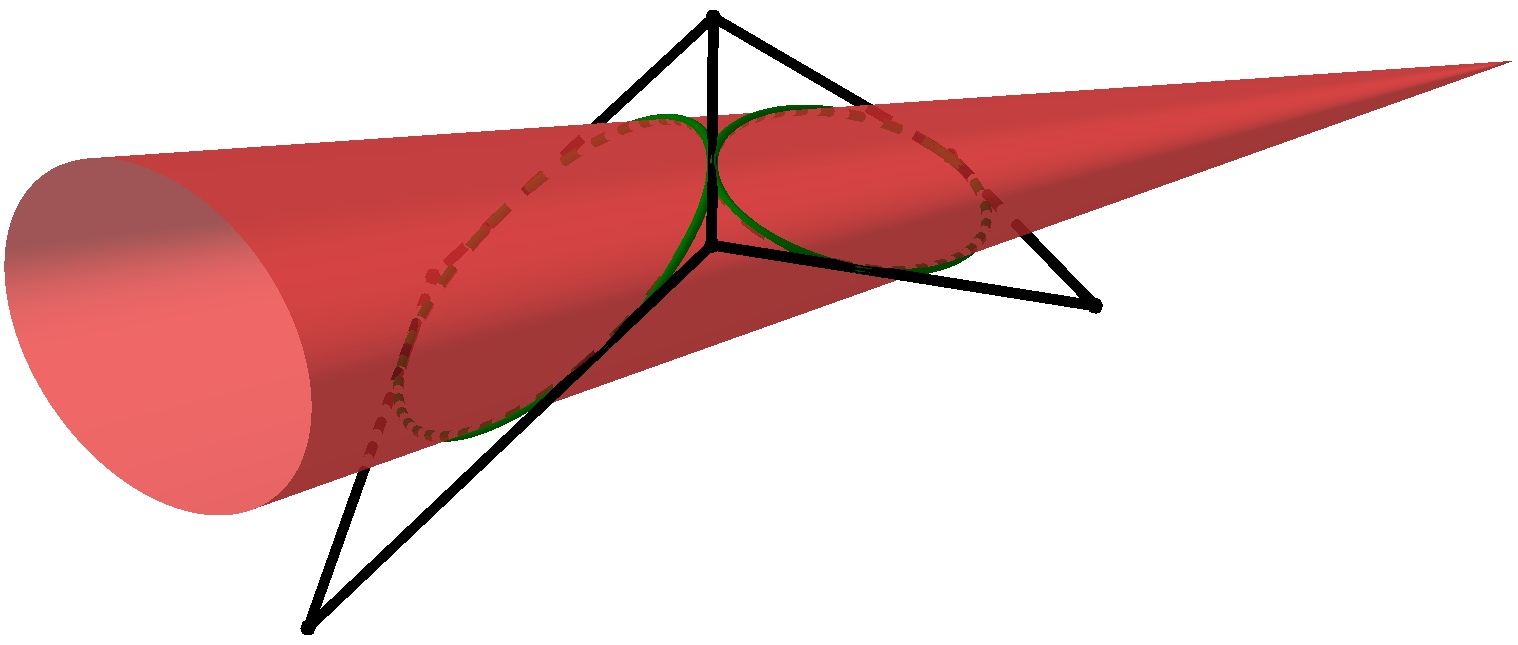}
	\caption{Two inscribed quadrics (red) for the same extensive K{\oe}nigs net but with different instances of touching conics (green). The seven edges (black) are tangent to the inscribed quadrics.}
    \label{figure:twoinscribedquadrics}
\end{figure}

Our second contribution concerns \emph{Kœnigs $d$-grids}, which are Kœnigs nets such that the parameter spaces are $d$-dimensional projective subspaces for some fixed $d$. Note that we consider Kœnigs $d$-grids defined both on rectangular subsets of $\Z^2$ or on \emph{all} of $\Z^2$.
We call a Kœnigs $d$-grid \emph{generic} if it satisfies a mild genericity condition that we make precise later on. Briefly stated, for certain square patches, we require that the points are in general position. With this notion, our second main result is as follows:
\begin{statement}
	Every generic Kœnigs $d$-grid has a unique inscribed quadric.
\end{statement}
Additionally, we show that the unique inscribed quadric is non-degenerate. Furthermore, we show that there are special (non-generic) Kœnigs $d$-grids such that there is a 1-parameter family of inscribed quadrics. The occurrence of these special grids is interesting in itself, but also provides evidence that our genericity assumptions are necessary.

Let us also point out that generic Kœnigs $d$-grids are not extensive, therefore our first main result does not immediately apply to Kœnigs $d$-grids. Despite that, we do make use of the first main result to prove the second main result. Note that the case $d=1$ of our second main result states that in this case there is a unique instance of touching conics such that all the touching conics coincide, and this result was previously obtained in \cite{bobenkofairley2021nets}.

Our third contribution offers an elegant method to describe all Kœnigs $d$-grids. To do so, we introduce the notion of a \emph{(discrete) autoconjugate curve}, which is a discrete curve that is contained in a (non-degenerate) quadric in $\RP^{2d}$, such that its osculating $(d-1)$-spaces are also contained in the quadric. We call a pair of autoconjugate curves \emph{generic} if the pair satisfies a certain genericity condition. Briefly stated, a pair of autoconjugate curves is generic if certain pairs of osculating spaces are in general position. With these definitions, our third main result is as follows:
\begin{statement}
	Generic Kœnigs $d$-grids are in bijection with generic pairs of autoconjugate curves.
\end{statement}
Given a generic Kœnigs $d$-grid, the quadric containing the pair of autoconjugate curves is the unique inscribed quadric provided by our second main result. Furthermore, the autoconjugate curves are the $d$-th forwards and backwards \emph{Laplace transforms} (see Section~\ref{sec:laplace}) of the diagonal intersection net. Conversely, the Kœnigs $d$-grid is obtained by intersecting osculating spaces of the autoconjugate curves.

We believe our results are interesting from the standalone viewpoint of discrete geometry. Additional interest comes from the fact that our results can be compared to classical results in (smooth) differential geometry. In fact, Tzitzéica obtained analogues to our second and third main results  \cite{Tzitzeica1924geometrie} -- almost exactly 100 years ago. However, we should point out that we used completely different (and novel) methods to prove our results. Moreover, in keeping with the standards of the time, the results in \cite{Tzitzeica1924geometrie} are not completely rigorous (by modern standards) in the sense that they are not accompanied by a discussion of sufficient or necessary genericity constraints. Also, as far as the authors are aware, there is no corresponding version of our first main result in (smooth) differential geometry.

We would like to mention that we think our results also raise interesting new questions, see Section~\ref{sec:isothermic} for more details. As mentioned at the beginning of the introduction, isothermic nets are special cases of Kœnigs nets. Consequently, all three of our main results also apply to isothermic nets, though it would be interesting to see what additional properties arise for the inscribed quadrics and autoconjugate curves for isothermic nets. Finally, there has been renewed interest in smooth and discrete isothermic nets with planar or spherical curvature lines \cite{Abresch1987, Walter1987, Bernstein2001, bobenko2023compact, bobenko2023isothermictorifamilyplanar, cpsconstrained, hoffmannSzewieczek2024isothermic}. These are special cases of Kœnigs grids (directly or via a Möbius lift), but they are not generic Kœnigs grids. Thus, it would be interesting to see how our results need to be modified to apply to these cases.

\subsection*{Plan of the paper}

In Section~\ref{sec:basics} we briefly recall basic notions of projective geometry. In Section~\ref{sec:results} we give the precise main definitions and results of our paper. In Sections~\ref{sec:laplace} and \ref{sec:inscribedconics} we recall the necessary theory of Laplace transformations and touching conics, respectively. In Section~\ref{sec:binets} we take a short detour presenting a side result: a new instance of Kœnigs binets that arise naturally in our work. Section~\ref{sec:gluing} gives a version of a classical lemma about pencils of quadrics in a form that is suitable for our proofs. Subsequently, Sections~\ref{sec:isncribedquadrics}, \ref{sec:constrained} and \ref{sec:autoconjugate} each are dedicated to proving one of our three main results. Finally, in Section~\ref{sec:isothermic} we give a few concluding remarks with respect to possible further directions of research.

\subsection*{Acknowledgments}

N.~C.~Affolter and A.~Y.~Fairley were supported by the Deutsche Forschungsgemeinschaft (DFG) Collaborative Research Center TRR 109 ``Discretization in Geometry and Dynamics''. We would like to thank Christian Müller and Jan Techter for invaluable discussions.

\section{Geometry of $\RP^n$}\label{sec:basics}

We briefly review some basic notions of projective geometry that are necessary for the understanding of this paper, see also \cite{casas2014} for more background. 

For $x,y \in \R^{n+1} \setminus \{0\}$ we use the equivalence relation $x \sim y$ if and only if $x= \lambda y$ for some non-zero $\lambda \in \R$. We define \emph{projective space} $\RP^n$ as
\begin{align}
	\RP^n = (\R^{n+1} \setminus \{0\})\ /\ {\sim} = \{ [x] \ | \ x \in \R^{n+1} \setminus \{0\} \}.
\end{align}
For a point $[x] \in \RP^n$, we call $x \in \R^{n+1}\setminus \{0\}$ a \emph{representative vector} of $[x]$. Every $d$-dimensional projective subspace $A \subset \RP^n$ is the projectivization of a $(d+1)$-dimensional linear subspace $a \subset \R^{n+1}$, that is
\begin{align}
	A = [a] = \{[x] \mid x \in a \setminus \{0\}\}.
\end{align}
For two projective subspaces $A = [a]$, $B= [b]$ the \emph{join} $A \vee B$ is
\begin{align}
	A \vee B = [ \mbox{span}\{a,b\}].
\end{align}

For four points $P_1,P_2,P_3,P_4 \in \RP^1$ given by representative vectors $p_1,p_2,p_3,p_4\in \R^2$, the \emph{cross-ratio} is given by
\begin{align}
	\cro(P_1,P_2,P_3,P_4) = \frac{\det(p_1, p_2) \det(p_3, p_4)}{\det(p_2, p_3) \det(p_4,p_1)}.
\end{align}
The cross-ratio is independent of the choice of representative vectors and it is invariant under projective transformations. For four points on a line $\RP^1 \subset \RP^n$, the cross-ratio is defined analogously by restricting to $\RP^1$.

Let $A,C\subset \RP^n$ be two supplementary projective subspaces, which means that $A \vee C = \RP^n$ and $A \cap C = \emptyset$. Recall that the \emph{central projection}  $\pi: \RP^n \setminus C \rightarrow A$ (with \emph{center} $C$) is defined such that $\pi(X)= (X \vee C) \cap A$. Note that cross-ratios are invariant under central projection.

Given a (non-trivial) symmetric bilinear form $\p \colon \R^{n+1} \times \R^{n+1} \to \R$, the associated \emph{quadric} is the set
\begin{align}
	\mathcal{Q} := \{[x] \in \RP^n \mid \p(x,x) = 0\}.
\end{align}
Quadrics in $\RP^2$ are called \emph{conics}.
Non-zero scalar multiples of $\p$ define the same quadric $\mathcal Q$. Two points $[x], [y] \in \RP^n$ are \emph{conjugate} relative to $\mathcal{Q}$ if $\p(x,y) = 0$. The \emph{polar} of a point $P = [p] \in \RP^n$ with respect to $\mathcal{Q}$ is
\begin{align}
	P^\perp := \{[x] \in \RP^n \mid \p (p,x) = 0\}.
\end{align}
A point $P$ in $\mathcal{Q}$ is \emph{singular} if $P^\perp = \RP^n$. A non-empty quadric is \emph{non-degenerate} if it has no singular points. A projective subspace $A \subset \RP^n$ is an \emph{isotropic} subspace of a quadric $\mathcal{Q}$ if $A \subset \mathcal{Q}$.

Let $e_1, e_2, \dots, e_n$ denote the unit vectors of $\R^n$. By a change of basis every symmetric bilinear form $\p$ can be brought into a standard form such that $\p(e_i, e_j) = 0$ whenever $i\neq j$ and such that $\p(e_i,e_i) \in \{1,-1,0\}$. We denote the \emph{signature} of $\p$ by a string composed of $\mathtt{+}$, $\mathtt{-}$, $\mathtt{0}$. The number of  $\mathtt{+}$, $\mathtt{-}$, $\mathtt{0}$ in the signature corresponds to the number of $e_i$ with $\p(e_i,e_i)$ equal to $1,-1,0$ respectively. The signature of a quadric $\mathcal{Q}$ is defined as the signature of any representative symmetric bilinear form. The signature of $\mathcal{Q}$ is well defined up to sign, that is up to exchanging  all $\mathtt+$ with all $\mathtt-$. As an example, the signature of a non-degenerate conic in $\RP^2$ is $\texttt{(++-)}$ or equivalently $\texttt{(+--)}$.

\section{Main results}\label{sec:results}

After introducing some basic notation, we will briefly introduce Q-nets in Section~\ref{sec:resultsqnets}. Subsequently, each of the Sections~\ref{sec:resultskoenigs}, \ref{sec:resultsgrids} and \ref{sec:resultsauto} is devoted to the presentation of one of our three main results.

Throughout, we let $m \in \N$ be a positive integer and for $a,b \in \N$ we write
\begin{align}
	\Sigma_m := \{0,1, \ldots, m\}, \quad  \Sigma_{a,b} := \Sigma_a \times \Sigma_b.
\end{align}
We write $\Sigma$ when a statement makes sense for both the finite and the infinite case, that is $\Sigma$ represents $\Sigma_{a,b}$ or $\Z^2$. 



\subsection{Q-nets} \label{sec:resultsqnets}

Most maps we consider in the paper are special cases of Q-nets \cite{sauer1970differenzen, dsmultidimconjugate, BS2008DDGbook}, which are defined as follows.
\begin{definition}
	A \emph{Q-net} is a map $\Sigma \rightarrow \RP^n$ such that the image of each face is contained in a plane.
\end{definition}

We are mostly interested in generic Q-nets in the sense of the following definition.
\begin{definition}
	A \emph{non-degenerate} Q-net $P: \Sigma \rightarrow \RP^n$ is a Q-net such that
	\begin{enumerate}
		\item for every edge the two corresponding points do not coincide, and
		\item for any three vertices of any face the corresponding points of $P$ span a plane. 
	\end{enumerate}
\end{definition}
Note that the second condition implies the first condition unless the domain of $P$ is $\Sigma_{a,0}$ or $\Sigma_{0,b}$.
We also use the following stronger notion of genericity, which is crucial to formulate our main results.

\begin{definition}\label{defn:extensive}
	We call a Q-net $P \colon \Sigma_{a,b} \rightarrow \RP^n$ \emph{extensive} if it is non-degenerate and if the image of $P$ joins an $(a+b)$-dimensional projective space. Moreover, for $c,d \in \N$ with $c \leq a, d \leq b$ we say $P$ is \emph{$\Sigma_{c,d}$-extensive} if every $\Sigma_{c,d}$ subpatch of $P$ is extensive.
\end{definition}

Note that a Q-net $P$ restricted to the set	 $\{(i,j) \in \Sigma_{a,b} \mid ij = 0\}$ joins at most $a+b$ dimensions, and the remainder of the points of $P$ is in this join. Hence, a Q-net $P$ defined on $\Sigma_{a,b}$ joins at most $a+b$ dimensions. Thus, an extensive Q-net is joining the maximal possible dimension. Also note that any restriction of an extensive Q-net to a smaller subpatch is also an extensive Q-net.

An important relation between non-degenerate and extensive Q-nets is the following lemma shown in \cite[Lemma~4.2]{AffolterFairleyKoenigsLaplace}.
\begin{lemma}\label{lem:extensive}
	For every non-degenerate Q-net $P$ defined on a finite patch $\Sigma_{a,b}$ there is an extensive Q-net $\hat P$ (called a \emph{lift} of $P$), such that there is a central projection $\pi$ with 
    \begin{align}
        P = \pi \circ \hat P.
    \end{align}
\end{lemma}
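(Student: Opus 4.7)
The plan is to construct $\hat P$ by lifting $P$ to a higher-dimensional ambient space $\RP^N$ and using the Q-net planarity to propagate the lift outward from the two boundary rays $\{(i,0)\}_{i=0}^a$ and $\{(0,j)\}_{j=0}^b$. The key observation is that once the boundary lifts are chosen so as to span an $(a+b)$-dimensional subspace of $\RP^N$, the interior lifts are forced by the planarity and projection conditions to lie in that same span, so $\hat P$ is automatically extensive.

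Concretely, I would take $N = n + a + b$, linearly embed $\RP^n$ into $\RP^N$ as a subspace $A$, and pick a complementary subspace $C \subset \RP^N$ of dimension $N - n - 1 = a + b - 1$ to serve as the center of a central projection $\pi \colon \RP^N \setminus C \to A$. Set $\hat P(0,0) := P(0,0) \in A$ and, proceeding outward along the axes, choose each $\hat P(i,0) \in \pi^{-1}(P(i,0))$ and each $\hat P(0,j) \in \pi^{-1}(P(0,j))$ to lie outside the span of the previously chosen boundary lifts. Each fiber of $\pi$ has projective dimension $a + b$, strictly larger than the dimension of the forbidden span, so such choices exist and the resulting $a + b + 1$ boundary lifts span an $(a+b)$-dimensional subspace of $\RP^N$.

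Next I would inductively define the interior lifts $\hat P(i,j)$ for $i, j \ge 1$ in increasing order of $i + j$. By non-degeneracy of $P$, the three already-defined lifts $\hat P(i-1, j-1)$, $\hat P(i-1, j)$, $\hat P(i, j-1)$ project to three points spanning the face plane of $P$ and therefore themselves span a plane $\Pi_{i,j} \subset \RP^N$ that $\pi$ maps onto that face plane. Provided the boundary lifts were chosen so that $\Pi_{i,j} \cap C = \emptyset$ for every face, the restriction $\pi|_{\Pi_{i,j}}$ is a projective isomorphism and I define $\hat P(i,j)$ as the unique preimage of $P(i,j)$. By construction $\hat P$ is a Q-net with $\pi \circ \hat P = P$; non-degeneracy of $\hat P$ follows from that of $P$ (central projection sends collinear points to collinear points); and every interior lift lies in the span of the boundary lifts, so $\hat P$ is extensive.

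The main obstacle I expect is verifying that the generic conditions $\Pi_{i,j} \cap C = \emptyset$ can be arranged for all $(i,j)$ simultaneously. Each is a Zariski-open condition on the tuple of boundary lifts, which forms an irreducible parameter space (a product of affine fibers), so the intersection of finitely many non-empty opens is non-empty. Non-emptiness of each individual open follows from a dimension count: a fixed 2-plane and a fixed $(a+b-1)$-subspace in $\RP^{n+a+b}$ are in general position disjoint whenever $n \ge 2$, which is automatic in the presence of any non-degenerate face. Alternatively, one may replace the single-shot construction by an induction on $a + b$, attaching one new row or column at a time and using the freedom in the fibers to enforce both the spanning property and the transversality to $C$ as each new face is added.
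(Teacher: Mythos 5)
The paper does not actually prove this lemma in-house: it imports it from \cite[Lemma~4.2]{AffolterFairleyKoenigsLaplace}, so there is no internal proof to compare against. That said, your construction (embed $\RP^n$ in a larger $\RP^N$, lift the two boundary rays generically so that they span $a+b$ dimensions, then propagate the lift face by face using planarity and the projection condition) is the standard way this lemma is proved, and your argument is correct: the boundary choices are possible because each fiber of $\pi$ has dimension $a+b$ and hence is never contained in the span of the previously chosen lifts, the interior points are forced and stay inside the span of the boundary lifts, and non-degeneracy of $\hat P$ follows from that of $P$ because a central projection cannot increase the dimension of the span of a point set.

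One simplification: the transversality $\Pi_{i,j} \cap C = \emptyset$ that you flag as the ``main obstacle'' is automatic and needs no genericity or Zariski argument. Indeed, $\Pi_{i,j}$ is spanned by three already-constructed lifts whose images under $\pi$ span the face plane of $P$ (non-degeneracy of $P$). If $\Pi_{i,j}$ met $C$ in a point (resp.\ a line), then $\pi$ restricted to $\Pi_{i,j}\setminus C$ would identify points along lines through $\Pi_{i,j}\cap C$ and its image would be at most a line (resp.\ a point), contradicting the fact that this image contains three non-collinear points. So $\pi|_{\Pi_{i,j}}$ is automatically a projective isomorphism onto the face plane, and the only place where a choice has to be made is along the two boundary rays. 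Your closing Zariski-openness argument is therefore superfluous (and, as stated, its non-emptiness step is slightly shaky because the planes $\Pi_{i,j}$ are not free but determined by the boundary data); dropping it leaves a clean and complete proof.
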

As a result, if an object (or property) is invariant under central projections it suffices to consider extensive Q-nets. Importantly, every projection and every lift of a Kœnigs net is a Kœnigs net \cite[Section~4]{AffolterFairleyKoenigsLaplace}. There are also other objects that are not necessarily preserved by central projection -- like isotropic spaces -- but the existence of which in the lift still has implications for the projected Q-net, as we will point out in several remarks throughout the paper.

\subsection{Kœnigs nets} \label{sec:resultskoenigs}

In this paper, we study \emph{(discrete) Kœnigs nets} as introduced by Bobenko and Suris \cite{BS2009Koenigsnets}, which are a discretization of \emph{smooth Kœnigs nets} \cite{BS2008DDGbook}. Note that in some of the classical literature \cite{darboux1915lecons, Tzitzeica1924geometrie} the name ``Kœnigs nets'' was not yet established, instead the nets were simply called \emph{conjugate nets with equal Laplace invariants}. Kœnigs nets are studied in differential geometry for several reasons, maybe most importantly because if a surface has a curvature-line parametrization that is also a Kœnigs net, then the surface is an isothermic surface.
In the following, when we refer to Kœnigs nets we always mean discrete Kœnigs nets.
Note that Kœnigs nets have previously appeared in \cite{sauer1933wackelige} and \cite{Doliwa2007}, but without the interpretation as a discretization of smooth Kœnigs nets.

There are multiple equivalent characterizations of Kœnigs nets, see \cite{BS2008DDGbook}. We will define Kœnigs nets with the characterization that is most useful (and crucial) to prove our main results. The characterization is comparatively novel and involves certain touching inscribed conics \cite{bobenkofairley2021nets}.

\begin{definition}
	We say that a $Q$-net $P\colon \Z^2 \to \RP^n$ has an instance of \emph{touching conics} $\mathcal C$ if each quad is equipped with an inscribed conic such that for every edge the conics of the two adjacent quads touch the line of the edge in the same point (see Figure~\ref{figure:touchingconics}).    
\end{definition}
We use the convention that $\mathcal C(i,j)$ is the conic inscribed into the quad with the vertices $P(i,j)$, $P(i+1,j)$, $P(i,j+1)$ and $P(i+1,j+1)$.

\begin{remark} \label{rem:degconics}
	A special case of a (degenerate) conic is a (double) line. We do not consider a double line to be an inscribed conic of a planar quad $P$, unless the double line is the line through the two Laplace points (Definition~\ref{def:laplace}). This convention ensures that our definition of Kœnigs nets agrees with other characterizations of Kœnigs nets. In particular, we are excluding the case that the inscribed conics are diagonals of the quads. Otherwise, for any Q-net there would be two instances of touching conics that are determined by the diagonals.
\end{remark}

We are ready to give our definition of Kœnigs nets.

\begin{definition}\label{thm:BSKoenigstouchingconics}
	A non-degenerate Q-net $P\colon \Z^2 \to \RP^n$ is a Kœnigs net if and only if it admits an instance of touching conics. Moreover, every Kœnigs net admits a $1$-parameter family of touching conics (see Figure~\ref{figure:touchingconics}).
\end{definition}

The equivalence of this definition to previously found characterizations of Kœnigs nets was proven in \cite[Thm 3.5]{bobenkofairley2021nets}. Note that the characterization was also independently found by Christian Müller and Martin Peternell [unpublished].

\begin{remark}\label{rem:Koenigsconicssmoothdiscrete}
    In the smooth theory, there is a theorem called \emph{Kœnigs' theorem}, see \cite[p.~79]{Tzitzeica1924geometrie}. The theorem characterizes K{\oe}nigs nets as conjugate nets $x \colon \R^2 \to \RP^n$ that satisfy a condition involving the existence of certain conics. Let us briefly explain Kœnigs' condition. 
    Let $x_{\pm 1}$ be the two Laplace transforms of $x$. The condition is that for each $(u,v) \in \R^2$ there exists a conic that has second order contact at the two points $x_{\pm 1}(u,v)$ with the two tangent lines of the parameter curves at $x(u,v)$. Let us compare this to the discrete theory. According to Definition~\ref{thm:BSKoenigstouchingconics}, each (discrete) K{\oe}nigs net $P$ has an instance of touching conics $\mathcal{C}$. Each inscribed conic $\mathcal{C}(i,j)$ is tangent to the four edge-lines of its corresponding planar quad of $P$, which is quite similar to the smooth theory. However, the conic $\mathcal{C}(i,j)$  does not contain the Laplace points $P_{\pm 1}(i,j)$ if $\mathcal{C}(i,j)$ is non-degenerate. Therefore, it is not clear whether any instance of touching conics $\mathcal{C}$ is a counterpart of the conics in the smooth theory. Note that Doliwa's discretization of Kœnigs nets (which are the diagonal intersection nets of the discrete Kœnigs nets we consider) comes with a discretization of the aforementioned conics in the smooth theory \cite{doliwa2003} -- see also Remark~\ref{rem:doliwaconic}.
\end{remark}

In the following, we want to generalize the notion of inscribed conics to inscribed quadrics on larger patches. With that in mind, we introduce a few useful spaces associated to Kœnigs nets.
First of all, for any net $P\colon \Sigma \to \RP^n$ we use the notation
\begin{align}
	\pah{P} (j) := \mathrm{join}\{P(i,j) \mid i : (i,j) \in \Sigma \}, \qquad \pav{P}(i) := \mathrm{join}\{P(i,j) \mid j : (i,j) \in \Sigma)\},
\end{align}
and we call each $\pah{P} (j)$ and each $\pav P(i)$ a \emph{parameter space} of $P$.
Moreover, for a Kœnigs net $P$ with touching conics $\mathcal C$, we define the \emph{touching points} $S \colon \Sigma_{a-1, b} \to \RP^n$ and $T \colon \Sigma_{a, b-1} \to \RP^n$ such that:
\begin{enumerate}
	\item $S(i,j)$ is the touching point of $\inc(i,j)$ with the line $P(i,j) \vee P(i+1,j)$,
	\item $T(i,j)$ is the touching point of $\inc(i,j)$ with the line $P(i,j) \vee P(i,j+1)$.
\end{enumerate}

Accordingly, $\pah{S}(j)$, $\pav S(i)$ and $\pah{T}(j)$, $\pav T(i)$ denote the spaces joined by the parameter lines of $S$ and $T$.

\begin{definition} \label{def:inscribedquadric}
	Let $P\colon \Sigma_{a,b} \to \RP^{a+b}$ be an extensive Kœnigs net with an instance of touching conics $\mathcal C$. A quadric $\mathcal{Q}$ is an \emph{inscribed quadric} if 
	\begin{enumerate}
		\item For all $(i,j) \in \Sigma_{a-1,b-1}$ the plane 
		\begin{align}
\Pi(i,j):= P(i,j) \vee P(i+1,j) \vee P(i,j+1)
		\end{align}
intersects $\mathcal{Q}$ in the corresponding inscribed conic
		\begin{align}
			\mathcal C(i,j) =  \mathcal Q \cap \Pi(i,j),
		\end{align}
		\item the space $\pav{T}(i)$ is contained in $\mathcal{Q}$ for all $i\in \Sigma_{a}$, and 
		\item the spaces $\pah{S}(j)$ is contained in $\mathcal{Q}$ for all $j\in \Sigma_{b}$.
	\end{enumerate}
\end{definition}

Let us explain why we call $\mathcal Q$ an \emph{inscribed} quadric. We will show in Lemma~\ref{lem:BSquadriclocalpatch} that Definition~\ref{def:inscribedquadric} implies that each $\pav P(i)$ is tangent to $\mathcal Q$ along the space $\pav T(i)$, and each $\pah P(j)$ is tangent to $\mathcal Q$ along the space $\pah S(j)$. In particular, if $P$ is defined on $\Sigma_{1,1}$, then $P$ is simply one quad. In this case, $\mathcal Q$ coincides with $\mathcal C(0,0)$. Moreover, the spaces $\pah{S}(0)$, $\pah{S}(1)$, $\pav{T}(0)$ and $\pav{T(1)}$ are the four touching points of $C(0,0)$ with the four tangents $\pah{P}(0)$, $\pah{P}(1)$, $\pav{P}(0)$ and $\pav{P}(1)$. Hence, we view inscribed quadrics as a generalization of inscribed conics.

We are now able to give a precise formulation of our first main result.
\begin{theorem}\label{thm:BSquadriclocalpatch}
	Let $P\colon \Sigma_{a,b} \to \RP^{a+b}$ be an extensive Kœnigs net with an instance of touching conics $\mathcal C$. Then, there is a unique inscribed quadric $\mathcal Q$.
\end{theorem}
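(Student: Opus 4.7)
The plan is to prove existence and uniqueness simultaneously by induction on $a+b$, reducing the inductive step to a pencil-of-quadrics gluing on two overlapping hyperplanes. For the base case $a=b=1$, the net $P$ is a single planar quad in $\RP^2$ and $\mathcal Q := \mathcal C(0,0)$ trivially satisfies every inscribed condition, since the parameter spaces $\pav T(i)$ and $\pah S(j)$ are zero-dimensional and lie on $\mathcal C(0,0)$ by the very definition of the touching points; uniqueness is immediate.

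For the inductive step, assume without loss of generality that $a\geq 2$, and split $P$ into the two extensive sub-nets $P_L := P|_{\Sigma_{a-1,b}}$ and $P_R := P|_{\{1,\dots,a\}\times\Sigma_b}$ with common overlap $P_M := P|_{\{1,\dots,a-1\}\times\Sigma_b}$. By extensivity their images join a hyperplane $H_L$, a hyperplane $H_R$, and the codimension-$2$ space $H_M = H_L\cap H_R$ respectively. The induction hypothesis supplies unique inscribed quadrics $\mathcal Q_L\subset H_L$, $\mathcal Q_R\subset H_R$, $\mathcal Q_M\subset H_M$; since both $\mathcal Q_L\cap H_M$ and $\mathcal Q_R\cap H_M$ satisfy the inscribed conditions for $P_M$, inductive uniqueness forces both to equal $\mathcal Q_M$. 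It then remains to build a quadric $\mathcal Q\subset\RP^{a+b}$ restricting to $\mathcal Q_L$ on $H_L$ and to $\mathcal Q_R$ on $H_R$, and this is precisely the situation addressed by the pencil-of-quadrics lemma of Section~\ref{sec:gluing}, which produces a $1$-parameter pencil of such extensions whose difference direction is the reducible quadric $H_L\cup H_R$.

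Given any extension $\mathcal Q$, the conic conditions $\mathcal Q\cap\Pi(i,j)=\mathcal C(i,j)$ and the isotropy of every $\pav T(i)$ hold automatically, because every $\Pi(i,j)$ and every $\pav T(i)$ is contained in $H_L$ or $H_R$. The delicate requirement is that for each row $j$ the full span $\pah S(j) = \pah{S_L}(j)\vee\pah{S_R}(j)$ be isotropic in $\mathcal Q$; since both halves are already isotropic by induction, this reduces to the vanishing of the cross-polarity between $\pah{S_L}(j)$ and $\pah{S_R}(j)$ with respect to the symmetric form defining $\mathcal Q$. For a single row this is one linear condition on the pencil parameter, which I expect both to cut out a unique extension (yielding uniqueness of $\mathcal Q$) and to impose the same value of the pencil parameter for every $j$. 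This consistency across rows is the main obstacle: it cannot follow from generic quadric geometry and must reduce to the Kœnigs/touching-conic property of $P$, presumably via the fact that the overlapping halves $\pah{S_L}(j)\cap\pah{S_R}(j)$ already sit in an isotropic subspace of $\mathcal Q_M$ shared by both $\mathcal Q_L$ and $\mathcal Q_R$.
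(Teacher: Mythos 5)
Your proposal reproduces the architecture of the paper's proof: induction on $a+b$, splitting $P$ into two overlapping subpatches whose images span hyperplanes, using inductive uniqueness on the overlap, and gluing the two inscribed quadrics into a pencil via Lemma~\ref{lem:2quadricsdefinepencil}. (The paper splits in the $j$-direction rather than the $i$-direction, so the roles of $S$ and $T$ are exchanged, but that is immaterial.) Your bookkeeping of which conditions come for free is also correct: every conic plane and every uncut parameter space of touching points lies entirely in one of the two hyperplanes, so only the cut family of spaces is at issue.

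The step you flag as ``the main obstacle'' is, however, the actual content of the theorem, and your proposal does not close it. Two things are missing. First, ``one linear condition on the pencil parameter per row'' gives neither existence nor uniqueness by itself: a condition could be identically satisfied on the pencil (killing uniqueness) or the conditions from different rows could be mutually inconsistent (killing existence). Second, the mechanism you guess --- that the two halves share an isotropic overlap inside $\mathcal Q_M$ --- cannot suffice: the cross-polarity that must vanish pairs the two \emph{new} directions, one from each half, and the shared overlap says nothing about that pairing. The paper's resolution is different. It first selects the unique pencil member through a point $Y$ of the first cut space (in its orientation, $Y \in \pav{T}(0)$) lying outside both hyperplanes; such a $Y$ exists by extensivity together with Lemma~\ref{lem:dimensiontspaces}. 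This yields uniqueness at once (any inscribed quadric must contain $Y$) and forces isotropy of that first space, since a quadratic form on it vanishing on two distinct hyperplanes and at an additional exterior point vanishes identically. It then runs a second induction over the remaining index, using the Laplace points $\pav{T}(0) \cap \pav{T}(1)$, etc., of the Q-net of touching points as the extra points that force isotropy of the next space --- and the bulk of the paper's proof is a case analysis over degenerate touching conics, precisely the situations in which those intersection points coincide with points already known and supply no new information. Without this (or an equivalent argument), the consistency across rows, and with it both existence and uniqueness, remains unproven.
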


Of course, since there is a 1-parameter family of touching conics $\mathcal C$, there is also a 1-parameter family of inscribed quadrics $\mathcal Q$. Note that while a choice of touching conics $\mathcal C$ consists of a set of $a \times b$ many conics, $\mathcal Q$ is just one quadric. 

Let us also add that we show that if $a \neq b$, that is if $P$ is not defined on a square-patch, then $\mathcal Q$ will be a degenerate quadric. In the square-patch case $\mathcal Q$ may be non-degenerate, but does not have to be. For example, in the $\Sigma_{2,1}$ case $\mathcal Q$ will be a cone or a (double) plane. In the $\Sigma_{1,1}$ case $\mathcal Q$ coincides with the inscribed conic $\mathcal C$, which may be a non-degenerate conic or a (degenerate) double line. 

Moreover, let us note that although Theorem~\ref{thm:BSquadriclocalpatch} only considers extensive Kœnigs nets, there are implications for non-extensive Kœnigs nets. For example, consider a Kœnigs net $P$ defined on $\Sigma_{2,2}$, such that the points only join $\RP^3$ instead of $\RP^4$. Let us also fix an instance of touching conics $\mathcal C$. A consequence of Theorem~\ref{thm:BSquadriclocalpatch} is that generally there will be a quadric $\mathcal Q$ that is tangent to each parameter space in a point, and such that each conic of $\mathcal C$ is tangent in two points to $\mathcal Q$. See Remark~\ref{rem:twicetangetnconics} for more details.

\begin{remark}\label{rem:inscribedquadriclowerdim}
	Let us mention that restrictions of extensive Kœnigs nets to subpatches are also extensive Kœnigs nets. Hence, it is also possible to define touching quadrics that exist on subpatches. For example, one might consider all the 3-dimensional inscribed quadrics defined on $\Sigma_{2,2}$ patches. Now, consider two adjacent $\Sigma_{2,2}$ patches, which are patches that overlap in a $\Sigma_{2,1}$ or $\Sigma_{1,2}$ patch. In this case, adjacent $\Sigma_{2,2}$-quadrics have two 2-dimensional tangent planes in common, and in each tangent plane they share an isotropic line. Moreover, the two quadrics have the same $3$-dimensional tangent space at the intersection point of the two aforementioned isotropic lines.
\end{remark}

\subsection{Kœnigs grids} \label{sec:resultsgrids}

Let us turn to Kœnigs nets with constrained parameter spaces. Specifically, we are interested in Kœnigs nets such that the parameter spaces are projective subspaces of some fixed dimension $d \in \N$. 

\begin{definition}\label{defn:ddimparameterlines}
	For $d\in \N$ we say that a map $P\colon \Sigma \to \RP^n$ is a \emph{$d$-grid} if the spaces $\pah{P}(j)$ and $\pav{P}(i)$ are $d$-dimensional for all $i,j$.
\end{definition}

To give our second main result we need a genericity condition on $d$-grids. The condition involves the notion of the $d$-th \emph{Laplace transforms} $P_d$ and $P_{-d}$ and the non-degeneracy condition that $P_d$ and $P_{-d}$ are \emph{nowhere Laplace degenerate}. Since this involves some technical notions, we postpone the introduction of these terms to Section~\ref{sec:laplace}, and more specifically Definition~\ref{def:laplace} and Definition~\ref{defn:nowherelaplace}.

\begin{definition}\label{def:genericgrid}
	Let $P \colon \Sigma \to \RP^n$ be a $d$-grid. Then $P$ is a \emph{generic $d$-grid} if $P$ is $\Sigma_{d,d}$-extensive and both $P_d$ and $P_{-d}$ exist and are nowhere Laplace degenerate.
\end{definition}

We now give a precise formulation of our second main result.

\begin{theorem}\label{thm:BSKoenigsconstrainedtangentquadric}
	Let $P\colon \Sigma \to \RP^{2d}$ be a generic Kœnigs $d$-grid. Then, there is a unique non-degenerate quadric $\mathcal{Q}$ such that each $\pav{P}(i)$ and each $\pah{P}(j)$ is tangent to $\mathcal{Q}$ along an isotropic $(d-1)$-space of $\mathcal{Q}$.
\end{theorem}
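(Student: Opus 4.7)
The plan is to build the desired quadric $\mathcal Q$ by gluing together inscribed quadrics produced by Theorem~\ref{thm:BSquadriclocalpatch} on $\Sigma_{d,d}$ subpatches, and then to deduce non-degeneracy and uniqueness from the global tangency behaviour. Fix any instance of touching conics $\mathcal C$ on the grid $P$. Because $P$ is $\Sigma_{d,d}$-extensive, each $\Sigma_{d,d}$ subpatch $P^{(i_0,j_0)}$ spans $\RP^{2d}$ and is an extensive Kœnigs net, so Theorem~\ref{thm:BSquadriclocalpatch} gives a unique inscribed quadric $\mathcal Q_{i_0,j_0} \subset \RP^{2d}$ for the restricted touching conics. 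Lemma~\ref{lem:BSquadriclocalpatch} moreover ensures that $\pav P(i)$ is tangent to $\mathcal Q_{i_0,j_0}$ along the $(d-1)$-isotropic subspace $\pav T(i)$ generated by the $d$ touching points of $\mathcal C$ on that column inside the subpatch, and symmetrically for rows.

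Second, I would show that adjacent $\Sigma_{d,d}$ subpatches produce the same quadric (for a suitably chosen $\mathcal C$), so the local $\mathcal Q_{i_0,j_0}$ glue into a single global $\mathcal Q$. Consider two horizontally adjacent subpatches whose overlap is a $\Sigma_{d,d-1}$ patch spanning a $(2d-1)$-dimensional subspace $V \subset \RP^{2d}$. Both $\mathcal Q_{i_0,j_0}$ and $\mathcal Q_{i_0+1,j_0}$ contain the inscribed conics, the $\pav T$-subspaces, and the $\pah S \cap V$-subspaces of the overlap. I would invoke the pencil-of-quadrics lemma of Section~\ref{sec:gluing} to deduce that two quadrics of $\RP^{2d}$ sharing this much tangency data on $V$ must coincide; the analogous step for vertical neighbours then identifies all local quadrics into one global $\mathcal Q$. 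A consequence is that, as $(i_0,j_0)$ varies, the local $(d-1)$-isotropic subspaces $\pav T(i)\big|_{\text{subpatch}}$ of each column $i$ all lie inside one and the same $(d-1)$-isotropic subspace of $\mathcal Q$; this yields the tangency statement in the theorem for the full grid.

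For non-degeneracy, I would argue that if $\mathcal Q$ had a singular point $O$, then central projection $\pi_O$ from $O$ would produce a Q-net $\pi_O \circ P$ in $\RP^{2d-1}$ together with a quadric $\pi_O(\mathcal Q)$ of lower rank still enjoying the same tangency property. Tracing this through the Laplace ladder, the $d$-th Laplace transforms $P_{\pm d}$ would then have to be Laplace-degenerate, contradicting the assumption that $P$ is a generic $d$-grid. This is the step where the nowhere-Laplace-degenerate hypothesis of Definition~\ref{def:genericgrid} really enters.

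Finally, for uniqueness, let $\mathcal Q'$ be any non-degenerate quadric tangent to each $\pav P(i)$ and each $\pah P(j)$ along a $(d-1)$-isotropic subspace. For every face plane $\Pi(i,j)$, the two pairs of tangent edges at the corners of the quad, together with the points where the edges meet the relevant isotropic subspaces, force $\mathcal Q' \cap \Pi(i,j)$ to be an inscribed conic of that quad, producing an instance of touching conics $\mathcal C'$. The quadric $\mathcal Q'$ then satisfies the defining conditions of an inscribed quadric for $\mathcal C'$ on each $\Sigma_{d,d}$ subpatch, so by Theorem~\ref{thm:BSquadriclocalpatch} it equals each local quadric associated to $\mathcal C'$; the gluing argument above then fixes $\mathcal C' = \mathcal C$ and $\mathcal Q' = \mathcal Q$. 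I expect the hardest step to be the gluing of the $\mathcal Q_{i_0,j_0}$: making precise which touching-conic choice is forced upon us by overlap compatibility, and deducing from the pencil-of-quadrics lemma that agreement of two quadrics of $\RP^{2d}$ on all the inscribed conics and isotropic subspaces of a $(2d-1)$-dimensional overlap really suffices for global equality.
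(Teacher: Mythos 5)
Your overall architecture (local inscribed quadrics from Theorem~\ref{thm:BSquadriclocalpatch}, gluing, non-degeneracy, uniqueness) matches the paper's, but the step you defer as ``the hardest'' is a genuine gap, and it is where essentially all of the paper's work in Section~\ref{sec:constrained} lives. You cannot ``fix any instance of touching conics $\mathcal C$'': for an arbitrary instance, the spaces $\pav T(i)$ and $\pah S(j)$ taken over more than $d$ consecutive edges are $d$-dimensional rather than $(d-1)$-dimensional (a $d$-grid is only $\Sigma_{d,d}$-extensive, not extensive), so they cannot be isotropic subspaces of a non-degenerate quadric in $\RP^{2d}$, and the local quadrics $\mathcal Q_{i_0,j_0}$ of adjacent subpatches simply do not coincide. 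The paper must first single out the unique \emph{special} instance of touching conics (Definition~\ref{def:special}), whose existence and uniqueness for generic grids is proved through the diagonal intersection net: Lemma~\ref{lem:Disadgrid} shows $D$ is a $d$-grid, Lemma~\ref{lem:uniquespecialtouchingconics} pins down $S(i,j)$ as $\pah D(j)\cap(P(i,j)\vee P(i+1,j))$, and Lemmas~\ref{lem:specialdegenerate}, \ref{lem:specialsolo}, \ref{lem:specialsym} rule out the special (non-generic) grids. Note that the nowhere-Laplace-degenerate hypothesis is already essential here, not only in the non-degeneracy step as you suggest.

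Your gluing mechanism is also flawed as stated: two horizontally adjacent $\Sigma_{d,d}$ subpatches overlap in a patch spanning only a hyperplane $V\subset\RP^{2d}$, and two quadrics of $\RP^{2d}$ agreeing on a hyperplane section need \emph{not} coincide --- Lemma~\ref{lem:2quadricsdefinepencil} produces an entire pencil of quadrics with prescribed restriction to a union of hyperplanes, so the pencil lemma works against you here rather than for you. The paper circumvents this by first constructing the inscribed quadric on $\Sigma_{d+1,d+1}$ patches (Corollary~\ref{cor:gridlargerspecial}, itself proved by lifting to $\RP^{2d+1}$ and controlling the singular locus via Lemmas~\ref{lem:singularpointsinscribedquadric} and \ref{lem:singularpointsinscribedquadricnew}); only then do the overlaps of consecutive patches contain full $\Sigma_{d,d}$ subpatches spanning all of $\RP^{2d}$, so that uniqueness in Theorem~\ref{thm:BSquadriclocalpatch} forces agreement (Theorem~\ref{th:uniquespeciality}). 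Your non-degeneracy and uniqueness sketches are plausible in spirit but rest on the unconstructed special conics, so as written the proposal does not yield a proof.
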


\begin{figure}[tb] 
	\centering
	\includegraphics[width=0.95\textwidth]{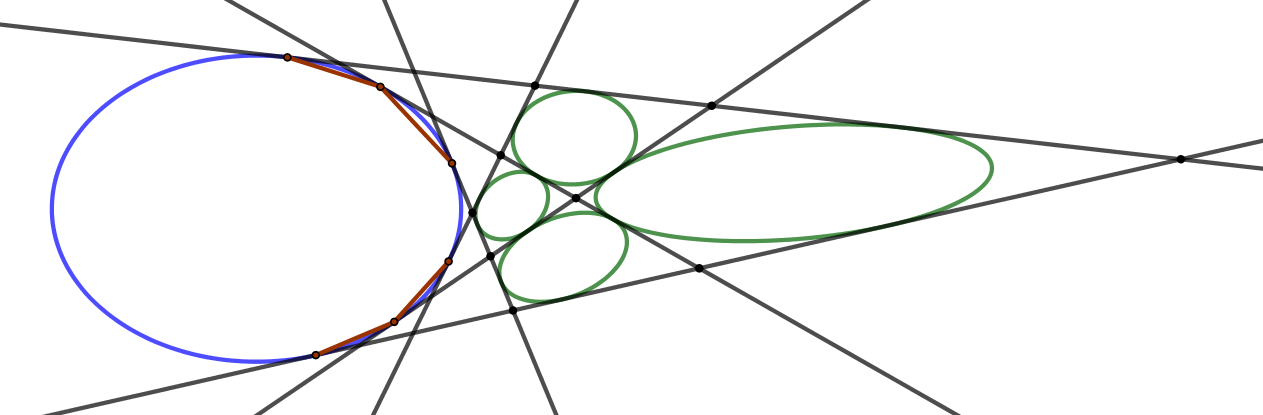}
	\caption{A generic K{\oe}nigs $1$-grid with an instance of touching conics. All the parameter spaces (the gridlines) are tangent to a non-degenerate conic.}  
	\label{figure:Koenigs1grid}
\end{figure}

Note that Theorem~\ref{thm:BSKoenigsconstrainedtangentquadric} is a generalization of a result in \cite{bobenkofairley2021nets} which deals with the $d=1$ case (see Figure~\ref{figure:Koenigs1grid} for an illustration). We also note that Theorem~\ref{thm:BSKoenigsconstrainedtangentquadric} has applications to non-generic Kœnigs $d$-grids. See Remark~\ref{rem:koenigsgridRP3} for an example.

\begin{remark}
	Note that if $P$ is a generic $d$-grid, then it turns out that the Laplace transforms $P_d$ and $P_{-d}$ are \emph{Goursat degenerate}, see \cite{AffolterFairleyKoenigsLaplace} for more background. In that sense, Theorem~\ref{thm:BSKoenigsconstrainedtangentquadric} provides a discretization of an analogous theorem for smooth Kœnigs nets with Goursat degenerations, which can be found in \cite[p.~156]{Tzitzeica1924geometrie}. However, note that the classical theory in \cite{Tzitzeica1924geometrie} does not precisely formulate necessary or sufficient genericity conditions for their results.
\end{remark}

\subsection{Autoconjugate curves} \label{sec:resultsauto}

We proceed to introduce (discrete) autoconjugate curves. We will show that pairs of such curves are in bijection to Kœnigs $d$-grids.

Consider a discrete curve $\gamma \colon \Z \rightarrow \RP^{n}$. For $k\in \Sigma_{n}$, we call the space
\begin{align}\label{eq:osculatingkspace}
	C_{(k)}(j) = \gamma(j) \vee \gamma(j+1) \vee  \dots \vee \gamma(j + k),
\end{align}
an \emph{osculating space} of $\gamma$. By convention, $C_{(-1)}(j) = \emptyset$.
Later, we will introduce a genericity assumption that ensures that each $C_{(k)}(j)$ is $k$-dimensional.

Osculating spaces also exist for smooth curves. Moreover, there is a characterization of smooth \emph{autoconjugate curves} \cite[p.~35]{Tzitzeica1924geometrie} in terms of $(d-1)$-dimensional osculating spaces that are contained in a quadric in $\RP^{2d}$. This characterization is our motivation for Definition~\ref{defn:autoconjugate}.

\begin{definition}\label{defn:autoconjugate}
	Let $\mathcal{Q}$ be a quadric and let $\gamma \colon \Z \to \RP^{2d}$ be a (discrete) curve. 
	The curve $\gamma$ is \emph{autoconjugate} with respect to $\mathcal{Q}$ if every osculating space $C_{(d-1)}(i)$ is contained in $\mathcal Q$.
\end{definition}

Note that the original definition of smooth autoconjugate curves in \cite[p.~35]{Tzitzeica1924geometrie} is different in character, and is not immediately suited for discretization. We discuss this in more detail in Remark~\ref{rem:autoconjugateothercharacterisation}.

In the following, we will need the following two genericity definitions.

\begin{definition}\label{def:genericcurve}
    We call a curve $\gamma \colon \Z \to \RP^{n}$ \emph{generic} if, for all $k \in \Sigma_n$, the osculating space $C_{(k)}$ has dimension $k$.
\end{definition}

Note that generic curves are exactly those that are $\Sigma_{n,0}$-extensive, when considered as Q-nets restricted to $\Z \times \{0\}$. Moreover, Definition~\ref{def:genericcurve} implies that all $C_{(n-1)}(j)$ are hyperplanes, and that consecutive hyperplanes are distinct -- for example $C_{(n-1)}(0)$ is not equal to $C_{(n-1)}(1)$.

\begin{definition} \label{def:genericpair}
	Let $\sigma, \tau \colon \Z \to \RP^{n}$ be a pair of curves, and let $S_{(k)}(j)$ and $T_{(k)}(i)$ be the osculating $k$-spaces of $\sigma$ and $\tau$, respectively. We call the pair \emph{generic} if for all $i,j \in \Z$ and all $k$ with $-1 \leq k \leq n$
	\begin{align}
		\dim S_{(k)}(j) \vee T_{(n - k - 1)}(i) = n.
	\end{align}
\end{definition}
Note that Definition~\ref{def:genericpair} implies that both curves $\sigma$ and $\tau$ are generic in the sense of Definition~\ref{def:genericcurve}. Additionally, the definition also implies that all ``smaller'' joins have maximal dimension, that is
\begin{align}
	\dim S_{(a)}(j) \vee T_{(b)}(i) = a + b + 1, \label{eq:autogendim}
\end{align}
whenever $a + b + 1 \leq n$.

\begin{theorem}\label{thm:bijection}
	Generic Kœnigs $d$-grids correspond bijectively to generic pairs of autoconjugate curves of non-degenerate quadrics in $\RP^{2d}$.
\end{theorem}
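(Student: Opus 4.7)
The plan is to construct mutually inverse maps following the recipe sketched in the introduction: from a generic Kœnigs $d$-grid $P$ produce the pair of autoconjugate curves $(\sigma,\tau):=(D_d,D_{-d})$ obtained by iterating Laplace transformations on the diagonal intersection net $D$ of $P$; conversely, given such a pair, recover $P$ by polarizing joins of osculating $(d-1)$-spaces.

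For the forward direction, I would apply Theorem~\ref{thm:BSKoenigsconstrainedtangentquadric} to get the unique non-degenerate quadric $\mathcal{Q}$, along with the isotropic $(d-1)$-spaces $I_j:=\pah{P}(j)^\perp$ and $J_i:=\pav{P}(i)^\perp$ along which the parameter spaces of $P$ are tangent to $\mathcal{Q}$. By Definition~\ref{def:genericgrid} the Laplace iterates $\sigma:=D_d$ and $\tau:=D_{-d}$ exist and are nowhere Laplace degenerate. The key claim is that the osculating $(d-1)$-space $S_{(d-1)}(j)$ of $\sigma$ coincides with $I_j$, and analogously that $T_{(d-1)}(i)=J_i$. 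I would prove this by combining two ingredients: (i)~the touching points $S,T$ of an instance of touching conics lie in $I_j$ and $J_i$ respectively, since each edge of $P$ meets $\mathcal{Q}$ only inside the relevant isotropic space; and (ii)~the inductive description of Laplace transforms from Section~\ref{sec:laplace} expresses $\sigma(k)$ as an iterated intersection of lines built from $S,T$, forcing $\sigma(k)\in I_{k-d+1}\cap\dots\cap I_{k}$. Autoconjugacy is then immediate from the isotropy of the $I_j,J_i$, and the generic-pair condition of Definition~\ref{def:genericpair} reduces to the extensivity and Laplace assumptions on $P$.

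For the backward direction, given a generic pair $(\sigma,\tau)$ on a non-degenerate quadric $\mathcal{Q}\subset\RP^{2d}$, define
\begin{align*}
P(i,j) := \bigl(S_{(d-1)}(j) \vee T_{(d-1)}(i)\bigr)^{\perp}.
\end{align*}
By the ``smaller join'' dimension identity following Definition~\ref{def:genericpair}, the inside is a $(2d-1)$-dimensional hyperplane, so $P(i,j)$ is a single point. Using $S_{(d-2)}(j+1)\subset S_{(d-1)}(j)\cap S_{(d-1)}(j+1)$ and the analogous inclusion for $T$, polarity shows that the four vertices of each face all lie in the plane $(S_{(d-2)}(j+1)\vee T_{(d-2)}(i+1))^{\perp}$, so $P$ is a Q-net; a parallel computation yields $\pah{P}(j)=S_{(d-1)}(j)^{\perp}$ and $\pav{P}(i)=T_{(d-1)}(i)^{\perp}$, so $P$ is a $d$-grid whose parameter spaces are tangent to $\mathcal{Q}$. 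I would then take $\mathcal{Q}\cap\Pi(i,j)$ as the inscribed conic of each face: each edge lies in a parameter space tangent to $\mathcal{Q}$ along an isotropic $(d-1)$-space, so it meets $\mathcal{Q}$ at the unique point of intersection with that isotropic space and is therefore tangent to the conic, while adjacent faces automatically share the same touching point. This yields the touching-conics property and hence the Kœnigs structure via Definition~\ref{thm:BSKoenigstouchingconics}, with $\mathcal{Q}$ realized as the inscribed quadric produced by Theorem~\ref{thm:BSKoenigsconstrainedtangentquadric}.

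The two constructions are mutually inverse thanks to the uniqueness clause in Theorem~\ref{thm:BSKoenigsconstrainedtangentquadric} and the identity $A^{\perp\perp}=A$ for non-degenerate quadrics: polarizing the osculating spaces built from $(\sigma,\tau)$ recovers the parameter spaces of $P$, while the quadric constructed from $P$ must be the unique one on which $D_{\pm d}$ live. The main obstacle will be step~(ii) of the forward direction: identifying the $d$-th Laplace iterate $D_{\pm d}$ of the diagonal intersection net with the autoconjugate curve that polarity produces from the parameter spaces of $P$. This requires carefully tracking how iterated Laplace transforms interact with the touching-point nets $S,T$ along the isotropic $(d-1)$-spaces, and is where the theory from Sections~\ref{sec:laplace} and \ref{sec:inscribedconics} will do the heavy lifting.
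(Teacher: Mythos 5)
Your proposal follows essentially the same route as the paper: the forward map sends $P$ to $(D_d,D_{-d})$, whose osculating $(d-1)$-spaces are identified with the isotropic spaces $\pah{S}(j)=\pah{P}(j)^\perp$ and $\pav{T}(i)=\pav{P}(i)^\perp$ of the special inscribed quadric, and the backward map is exactly the paper's $P(i,j)=S_{(d-1)}(j)^\perp\cap T_{(d-1)}(i)^\perp$, with mutual inverseness obtained from the uniqueness of the special inscribed quadric and biduality. The items you gloss over --- non-degeneracy of the reconstructed Q-net and the verification that the reconstructed grid and the extracted pair are generic in the senses of Definitions~\ref{def:genericgrid} and~\ref{def:genericpair} (which consumes most of the work in the paper and genuinely uses the nowhere-Laplace-degeneracy of $P_{\pm d}$) --- are carried out in Theorems~\ref{thm:pairauutoconjugategivesgenericBSKoenigsgrid} and~\ref{thm:genericBSKoenigsdgridgivesgenericpairautoconjugate} by the same dimension-counting you sketch.
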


We will give an explicit construction of both directions of the bijection in Section~\ref{sec:autoconjugate}, see Theorem~\ref{thm:pairauutoconjugategivesgenericBSKoenigsgrid} and Theorem~\ref{thm:genericBSKoenigsdgridgivesgenericpairautoconjugate}. As a result, all that is needed to construct a generic Kœnigs $d$-grid is a generic pair of autoconjugate curves -- and every generic Kœnigs $d$-grid may be constructed in this manner. Also note that Theorem~\ref{thm:bijection} is a discretization of an analogous theorem in the smooth theory \cite[p.~161]{Tzitzeica1924geometrie} (except that the necessary or sufficient genericity constraints were not investigated in the smooth theory).

Note that in the $d=1$ case an autoconjugate curve is simply a curve inscribed in a conic. See also Figure~\ref{figure:Koenigs1grid}, which shows a pair of autoconjugate curves corresponding to a K{\oe}nigs $1$-grid.

\begin{remark}\label{rem:autolow}
    One may also consider Kœnigs $d$-grids in lower dimensions than $\RP^{2d}$. In the example of a Kœnigs $2$-grid in $\RP^3$, we expect there to be an associated pair of discrete curves such that each edge is tangent to a quadric $\mathcal Q$. However, it is less clear if the converse direction also holds, whether the correspondence is $1:1$, and what the appropriate genericity conditions are.
\end{remark}

\section{Laplace transformations} \label{sec:laplace}

\begin{figure}
	\centering
	\begin{tikzpicture}[line cap=round,line join=round,>=triangle 45,x=1.0cm,y=1.0cm,scale=0.8]
		\definecolor{qqwuqq}{rgb}{0.,0.39215686274509803,0.}
		\clip(-2.06,-2.3) rectangle (12.42,5.56);
		\draw [line width=1pt,densely dashed] (5.64,2.46)-- (-1.62,-1.);
		\draw [line width=1pt,densely dashed] (-1.62,-1.)-- (5.5,5.16);
		\draw [line width=1pt,densely dashed] (5.5,5.16)-- (5.64,2.46);
		\draw [line width=1pt,densely dashed] (5.64,2.46)-- (12.16,-1.48);
		\draw [line width=1pt,densely dashed] (12.16,-1.48)-- (5.5,5.16);
		\draw [line width=1pt,densely dashed] (5.64,2.46)-- (5.3,0.74);
		\draw [line width=1pt,densely dashed] (5.3,0.74)-- (0.5278102388403072,0.023612042202129757);
		\draw [line width=1pt,densely dashed] (5.3,0.74)-- (10.566791115099843,-0.5172326677137082);
		\draw [line width=1pt,densely dashed] (3.749048234236283,3.6451316183842)-- (3.905296931048227,1.6332682343563176);
		\draw [line width=1pt,densely dashed] (3.905296931048227,1.6332682343563176)-- (3.8914355385497297,0.5285501900662476);
		\draw [line width=1pt,densely dashed] (7.633390140055851,0.182997786225348)-- (7.151434597262734,1.546648418218532);
		\draw [line width=1pt,densely dashed] (7.151434597262734,1.546648418218532)-- (7.819811895183212,2.847154506904426);
		\draw [line width=1.5pt,color=qqwuqq] (0.5278102388403072,0.023612042202129757)-- (-1.62,-1.);
		\draw [line width=1.5pt,color=qqwuqq] (-1.62,-1.)-- (12.16,-1.48);
		\draw [line width=1.5pt,color=qqwuqq] (12.16,-1.48)-- (10.566791115099843,-0.5172326677137082);
		\draw [line width=1.5pt,color=qqwuqq] (10.566791115099843,-0.5172326677137082)-- (0.5278102388403072,0.023612042202129757);
		\draw [line width=1.5pt] (3.749048234236283,3.6451316183842)-- (5.5,5.16);
		\draw [line width=1.5pt] (5.5,5.16)-- (5.64,2.46);
		\draw [line width=1.5pt] (5.64,2.46)-- (3.905296931048227,1.6332682343563176);
		\draw [line width=1.5pt] (3.905296931048227,1.6332682343563176)-- (3.749048234236283,3.6451316183842);
		\draw [line width=1.5pt] (3.905296931048227,1.6332682343563176)-- (3.8914355385497297,0.5285501900662476);
		\draw [line width=1.5pt] (3.8914355385497297,0.5285501900662476)-- (5.3,0.74);
		\draw [line width=1.5pt] (5.3,0.74)-- (5.64,2.46);
		\draw [line width=1.5pt] (5.3,0.74)-- (7.633390140055851,0.182997786225348);
		\draw [line width=1.5pt] (7.633390140055851,0.182997786225348)-- (7.151434597262734,1.546648418218532);
		\draw [line width=1.5pt] (7.151434597262734,1.546648418218532)-- (5.64,2.46);
		\draw [line width=1.5pt] (5.5,5.16)-- (7.819811895183212,2.847154506904426);
		\draw [line width=1.5pt] (7.819811895183212,2.847154506904426)-- (7.151434597262734,1.546648418218532);
		\begin{small}
			\draw [fill=black] (5.64,2.46) circle (2.5pt);
			\draw[color=black] (6.4,2.73) node {$P(1,1)$};
			\draw [fill=qqwuqq] (-1.62,-1.) circle (2.5pt);
			\draw[color=qqwuqq] (-1.0,-1.4) node {$P_{-1}(0,1)$};
			\draw [fill=black] (5.5,5.16) circle (2.5pt);
			\draw[color=black] (6.5,5.17) node {$P(1,2)$};
			\draw [fill=qqwuqq] (12.16,-1.48) circle (2.5pt);
			\draw[color=qqwuqq] (11.29,-1.76) node {$P_{-1}(1,1)$};
			\draw [fill=black] (5.3,0.74) circle (2.5pt);
			\draw[color=black] (6.2,0.9) node {$P(1,0)$};
			\draw [fill=qqwuqq] (0.5278102388403072,0.023612042202129757) circle (2.5pt);
			\draw[color=qqwuqq] (1.0,-0.4) node {$P_{-1}(0,0)$};
			\draw [fill=qqwuqq] (10.566791115099843,-0.5172326677137082) circle (2.5pt);
			\draw[color=qqwuqq] (9.67,-0.8) node {$P_{-1}(1,0)$};
			\draw [fill=black] (3.749048234236283,3.6451316183842) circle (2.5pt);
			\draw[color=black] (2.9,3.83) node {$P(0,2)$};
			\draw [fill=black] (3.905296931048227,1.6332682343563176) circle (2.5pt);
			\draw[color=black] (3,1.83) node {$P(0,1)$};
			\draw [fill=black] (3.8914355385497297,0.5285501900662476) circle (2.5pt);
			\draw[color=black] (3.9,0.1) node {$P(0,0)$};
			\draw [fill=black] (7.633390140055851,0.182997786225348) circle (2.5pt);
			\draw[color=black] (6.8,-0.05) node {$P(2,0)$};
			\draw [fill=black] (7.151434597262734,1.546648418218532) circle (2.5pt);
			\draw[color=black] (8,1.69) node {$P(2,1)$};
			\draw [fill=black] (7.819811895183212,2.847154506904426) circle (2.5pt);
			\draw[color=black] (8.4,3.21) node {$P(2,2)$};
		\end{small}
	\end{tikzpicture}
	\caption{The Laplace transform $\mathcal L_-P = P_{-1}$ (green) of a Q-net $P$ (black).}
	\label{fig:laplacetransform}
\end{figure}
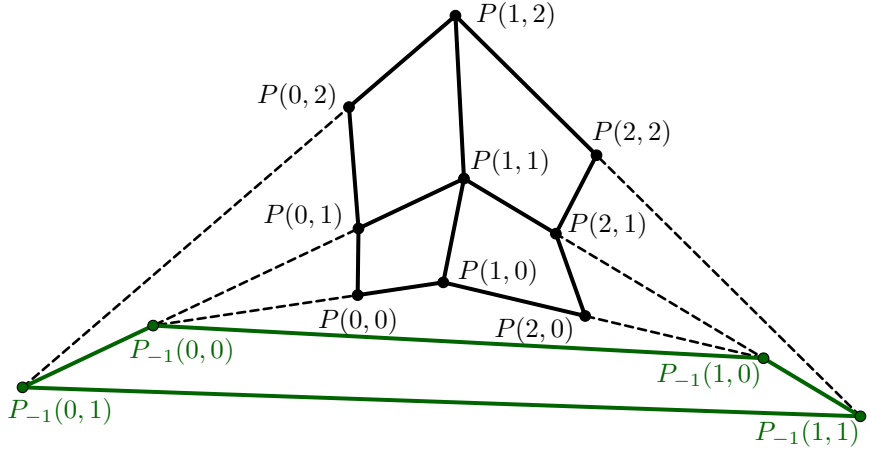

Since we used the language of Laplace sequences to precisely formulate the genericity assumptions for Theorem~\ref{thm:BSKoenigsconstrainedtangentquadric} and Theorem~\ref{thm:bijection}, let us briefly recall some theory. Laplace transformations of Q-nets were introduced by Doliwa \cite{doliwa1997geometricToda} as follows.

\begin{definition}\label{def:laplace}
	For a non-degenerate Q-net $P\colon \Sigma \rightarrow \RP^n$, the Laplace transforms $\mathcal{L}_{+}P$ and $\mathcal{L}_{-}P$ are defined by 
	\begin{align}
		\mathcal{L}_{+}P(i,j) := (P(i,j)\vee P(i,j+1))\cap (P(i+1,j)\vee P(i+1,j+1)), \label{eq:laplaceforward}\\
		\mathcal{L}_{-}P(i,j):= (P(i,j) \vee P(i+1,j))\cap (P(i,j+1) \vee P(i+1,j+1)).\label{eq:laplacebackward}
	\end{align}
\end{definition}
The intersection points in Definition~\ref{def:laplace} exist since the corresponding two lines are contained in the plane of the corresponding quad of the Q-net $P$, see Figure~\ref{fig:laplacetransform}. They are well-defined since $P$ is non-degenerate. Note that we only consider $\mathcal L_\pm P$ to be well-defined if $P$ is indeed non-degenerate, even though the formulas might also make sense in some degenerate cases. Also, for a Q-net $P$ defined on $\Sigma_{a,b}$ the Laplace transforms $\mathcal{L}_\pm P$ are defined on $\Sigma_{a-1,b-1}$.

It is straightforward to check that the Laplace transforms $\mathcal{L}_+ P$ and $\mathcal{L}_- P$ are Q-nets. 
Assuming that the Laplace transforms are non-degenerate, Laplace transformations can be iterated. To lighten the notation, for $m \geq 0$ we write
\begin{align}
	P_m := (\mathcal L_+)^m P \quad \mbox{and} \quad P_{-m} := (\mathcal L_-)^{m}P
\end{align}
for the $m$-times iterated Laplace transforms.

It readily follows from Definition~\ref{def:laplace} that the Laplace transformations $\mathcal{L}_+$ and $\mathcal{L}_-$ are mutually inverse up to an index shift, that is:
\begin{align}
\mathcal{L}_+ \circ \mathcal{L}_- P (i,j) = \mathcal{L}_- \circ \mathcal{L}_+ P (i,j) = P(i+1,j+1).    
\end{align}

The iterated Laplace transforms determine the {\em Laplace sequence}
\begin{align}
\ldots  \leftarrow P_{-3} \leftarrow  P_{-2}  \leftarrow P_{-1}  \leftarrow P \rightarrow P_{1} \rightarrow P_{2} \rightarrow P_{3} \rightarrow \ldots
\end{align}

Generically, the Laplace sequence is infinite in both directions. However, sometimes the Laplace sequence terminates in one direction, which means that some kind of degeneracy occurs in the Laplace sequence. There are two main types of degeneracies studied in the literature, called \emph{Laplace} and \emph{Goursat degenerate}. See \cite{AffolterFairleyKoenigsLaplace} for more details. In the present paper, we only consider the first type of degeneracy.

\begin{definition}\label{defn:laplace}
	Let $P\colon \Sigma \to \RP^n$ be a non-degenerate Q-net.
	\begin{enumerate}
		\item $P_m$ is \emph{Laplace degenerate} if $P_m(i,j)$ is independent of $i$ for all $j$.
		\item $P_{-m}$ is \emph{Laplace degenerate} if $P_{-m}(i,j)$ is independent of $j$ for all $i$.
	\end{enumerate}
\end{definition}

We will need the following strengthening of the condition that certain Laplace transforms are \emph{not} Laplace degenerate.

\begin{definition}\label{defn:nowherelaplace}
	Let $P\colon \Sigma \to \RP^n$ be a non-degenerate Q-net.
	\begin{enumerate}
		\item $P_m$ is \emph{nowhere Laplace degenerate} if $P_m(i,j) \neq P_m(i+1,j)$ for all $i,j$.
		\item $P_{-m}$ is \emph{nowhere Laplace degenerate} if $P_{-m}(i,j) \neq P_{-m}(i,j+1)$ for all $i,j$.
	\end{enumerate}
\end{definition}

The condition for $P_m$ not to be nowhere Laplace degenerate is one of the assumptions we used to define generic $d$-grids, see  Definition~\ref{def:genericgrid}.
We will also need the following lemma which is a combination of \cite[Lemma 5.5, Corollaries 5.6, 5.7]{AffolterFairleyKoenigsLaplace}.
\begin{lemma}\label{lem:expllaplacetransform}
	Let $P\colon \Sigma_{m,m} \to \RP^{2m}$ be an extensive Q-net. Suppose that $P_m$ exists. Then
	we get the explicit formula
	\begin{align}
		P_m(0,0) = \bigcap_{k = 0}^{m} \pav{P}(k).
	\end{align}
	Moreover, for $0 \leq u \leq m$ we have that
	\begin{align}
		\dim \left( \bigcap_{k = 0}^{u} \pav{P}(k) \right) = m - u. \label{eq:laplacedim}
	\end{align}
	Furthermore, if $P\colon \Sigma_{m+1,m+1} \to \RP^{2m}$ is a generic $m$-grid, then
	\begin{align}
		\bigcap_{k = 0}^{m+1} \pav{P}(k) = \emptyset.
	\end{align}
\end{lemma}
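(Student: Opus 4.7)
The plan is to handle the three assertions in order, using iterated Laplace transformations together with projective dimension counts. For the identity $P_m(0,0)=\bigcap_{k=0}^m\pav{P}(k)$, I first establish the inclusion ``$\subseteq$'' by induction on the number $j$ of Laplace iterations. Using the defining formula
\begin{align*}
P_j(i,\ell)=\bigl(P_{j-1}(i,\ell)\vee P_{j-1}(i,\ell+1)\bigr)\cap\bigl(P_{j-1}(i+1,\ell)\vee P_{j-1}(i+1,\ell+1)\bigr),
\end{align*}
the inductive hypothesis $P_{j-1}(a,b)\in\bigcap_{k=a}^{a+j-1}\pav{P}(k)$ places the two defining lines inside $\bigcap_{k=i}^{i+j-1}\pav{P}(k)$ and $\bigcap_{k=i+1}^{i+j}\pav{P}(k)$ respectively, so their intersection point $P_j(i,\ell)$ lies in $\bigcap_{k=i}^{i+j}\pav{P}(k)$. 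Specializing to $i=\ell=0$ and $j=m$ gives the inclusion. Equality will follow once the right-hand side is shown to be zero-dimensional, which is the $u=m$ case of the dimension formula.

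For the dimension identity $\dim\bigl(\bigcap_{k=0}^u\pav{P}(k)\bigr)=m-u$, I induct on $u$. Set $I_u:=\bigcap_{k=0}^u\pav{P}(k)$. The base case $u=0$ is $\dim\pav{P}(0)=m$, which follows from the extensivity of the first-column subpatch $P|_{\{0\}\times\Sigma_m}$. For the inductive step, $\Sigma_{1,m}$-extensivity of the two-column strip $P|_{\{u-1,u\}\times\Sigma_m}$ gives $\dim(\pav{P}(u-1)\vee\pav{P}(u))=m+1$; together with $I_{u-1}\subseteq\pav{P}(u-1)$ this yields $\dim(I_{u-1}\vee\pav{P}(u))\le m+1$, and the projective dimension formula
\begin{align*}
\dim I_u=\dim I_{u-1}+m-\dim(I_{u-1}\vee\pav{P}(u))
\end{align*}
together with the inductive hypothesis $\dim I_{u-1}=m-u+1$ gives the lower bound $\dim I_u\ge m-u$. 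For the matching upper bound, the iterative argument used in the first claim already shows $\pav{P_u}(0)\subseteq I_u$; I then argue that the $m-u+1$ points $P_u(0,0),\dots,P_u(0,m-u)$ are in general position, so $\dim\pav{P_u}(0)=m-u$. Combined with the lower bound this forces $\dim I_u=m-u$ and, as a byproduct, $\pav{P_u}(0)=I_u$.

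The \emph{main obstacle} is precisely the upper bound, i.e., proving $\dim\pav{P_u}(0)=m-u$, or equivalently that iterated Laplace transformations preserve the expected general position along the first column. This requires carefully tracking how extensivity of $P$ propagates through the Laplace sequence, which is exactly the content of the companion results assembled from \cite[Lemma~5.5, Corollaries~5.6, 5.7]{AffolterFairleyKoenigsLaplace} cited in the statement. Finally, for the third assertion, the generic $m$-grid hypothesis guarantees that both $\Sigma_{m,m}$-subpatches of $P$ based at $(0,0)$ and at $(1,0)$ are extensive, so the second claim gives $\bigcap_{k=0}^m\pav{P}(k)=\{P_m(0,0)\}$ and $\bigcap_{k=1}^{m+1}\pav{P}(k)=\{P_m(1,0)\}$. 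Intersecting, $\bigcap_{k=0}^{m+1}\pav{P}(k)\subseteq\{P_m(0,0)\}\cap\{P_m(1,0)\}$, and the nowhere-Laplace-degenerate hypothesis on $P_m$ forces $P_m(0,0)\ne P_m(1,0)$, so this intersection is empty.
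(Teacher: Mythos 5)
A preliminary remark: the paper does not prove this lemma itself --- it is imported as ``a combination of [Lemma 5.5, Corollaries 5.6, 5.7]'' of \cite{AffolterFairleyKoenigsLaplace} --- so there is no in-paper proof to compare against. On its own merits, your first step (the inclusion $P_j(i,\ell)\in\bigcap_{k=i}^{i+j}\pav{P}(k)$ by induction on the number of Laplace iterations) is correct, and so is your deduction of the third assertion from the second via the two overlapping extensive $\Sigma_{m,m}$-subpatches and the nowhere-Laplace-degeneracy of $P_m$.

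The gap is in the dimension formula \eqref{eq:laplacedim}. Write $I_u:=\bigcap_{k=0}^{u}\pav{P}(k)$. Your exact dimension identity together with $\dim\bigl(I_{u-1}\vee\pav{P}(u)\bigr)\le m+1$ correctly gives $\dim I_u\ge m-u$. But your ``matching upper bound'' is not an upper bound: exhibiting the subspace $\pav{(P_u)}(0)\subseteq I_u$ of dimension $m-u$ only re-proves $\dim I_u\ge m-u$, and is entirely consistent with $\dim I_u=m-u+1$ (the case $I_{u-1}\subseteq\pav{P}(u)$, which is exactly what your dimension count leaves open). So ``combined with the lower bound this forces $\dim I_u=m-u$'' is a non sequitur. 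What is actually needed is either the \emph{reverse} inclusion $I_u\subseteq\pav{(P_u)}(0)$, which bounds $\dim I_u$ by $m-u$ because $\pav{(P_u)}(0)$ is the join of only $m-u+1$ points, or a direct proof that $I_{u-1}\not\subseteq\pav{P}(u)$, forcing $\dim\bigl(I_{u-1}\vee\pav{P}(u)\bigr)=m+1$ exactly. Neither follows from the general position of $P_u(0,0),\dots,P_u(0,m-u)$, and neither follows from the dimensions of the $\pav{P}(k)$ and of their pairwise and total joins alone --- for instance, three planes in $\RP^4$ with pairwise joins of dimension $3$ and total join all of $\RP^4$ can still share a common line --- so the Q-net structure must enter in an essential way. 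That is precisely the content you would have to extract from the cited results of \cite{AffolterFairleyKoenigsLaplace}; deferring to them is legitimate (the paper does the same), but the way you splice the deferred fact into your own bounds does not close the argument, and consequently the equality $P_m(0,0)=\bigcap_{k=0}^{m}\pav{P}(k)$, which needs the $u=m$ case of \eqref{eq:laplacedim}, is also left unestablished.
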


\section{Touching conics}\label{sec:inscribedconics}

In this section, we discuss some basic properties of touching conics that we need for our main results.

Let us briefly recall what it means that two pairs of lines $\{L_1,L_3\}$ and $\{L_2,L_4\} $ in $\RP^2$ are \emph{harmonically separated}. It means that the four lines are concurrent in a point $X$ and that the cross-ratio $\cro(L_1,L_2,L_3,L_4)$ of the four lines is $-1$. The cross-ratio of four concurrent lines can be expressed by choosing another line $H$ that does not contain $X$ via the formula
\begin{align}
	\cro(L_1,L_2,L_3,L_4) = \cro(H \cap P_1, H \cap P_2, H \cap P_3, H \cap P_4).
\end{align}
It turns out that this expression does not depend on the choice of $H$.

\begin{lemma}\label{lem:harmonicdiagonals}
	Let $P\colon \Sigma_{1,1} \to \R \mathrm{P}^2$ be a planar quad with a non-degenerate inscribed conic. The two lines $\pav{S}(0)$ and $\pah{T}(0)$ harmonically separate the two diagonals
	\begin{align}
		P(0,0)\vee P(1,1), \qquad P(1,0) \vee P(0,1).
	\end{align}
	In particular, the four lines are concurrent (see Figure~\ref{figure:Koenigsbinetface}).
\end{lemma}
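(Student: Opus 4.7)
The plan is to reduce the lemma to classical projective facts about the complete quadrilateral formed by the four sides of the quad together with its inscribed conic, by identifying $\pav{S}(0)$ and $\pah{T}(0)$ as chords of contact (i.e., as polars of specific points).

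First I would label $A = P(0,0)$, $B = P(1,0)$, $C = P(1,1)$, $D = P(0,1)$ and denote the touching points as $P_1 \in AB$, $T_2 \in BC$, $P_2 \in CD$, $T_1 \in DA$, so that $\pav{S}(0) = P_1 \vee P_2$ and $\pah{T}(0) = T_1 \vee T_2$. Introducing the remaining diagonal points $E := AB \cap CD$, $F := BC \cap DA$, and $G := AC \cap BD$, standard pole-polar theory gives $\mathrm{pol}(E) = P_1 \vee P_2 = \pav{S}(0)$ (the polar of the intersection of two tangents is the chord joining the touching points) and similarly $\mathrm{pol}(F) = \pah{T}(0)$.

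For the concurrency, I would invoke the classical self-polar theorem: the diagonal triangle of the complete quadrilateral, with vertices $G$, $G_1 := AC \cap EF$ and $G_2 := BD \cap EF$, is self-polar with respect to any inscribed conic. In particular, $\mathrm{pol}(G) = G_1 \vee G_2 = EF$. By reciprocity of polarity ($X \in \mathrm{pol}(Y) \iff Y \in \mathrm{pol}(X)$), the fact that $E, F \in EF = \mathrm{pol}(G)$ yields $G \in \mathrm{pol}(E) = \pav{S}(0)$ and $G \in \mathrm{pol}(F) = \pah{T}(0)$. Combined with $G \in AC \cap BD$, all four lines pass through $G$.

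For the harmonic property, I would project the pencil of four concurrent lines at $G$ onto the line $EF$, obtaining the four collinear points $G_1$, $G_2$, $X := \pav{S}(0) \cap EF$ and $Y := \pah{T}(0) \cap EF$. The classical harmonic property of the complete quadrilateral gives $(E, F; G_1, G_2) = -1$. The conic induces an involution on $EF$ by sending each point $Z$ to $\mathrm{pol}(Z) \cap EF$; by the polar identifications above combined with the self-polar property, this involution interchanges $E \leftrightarrow X$, $F \leftrightarrow Y$, and $G_1 \leftrightarrow G_2$. Since this involution is a projectivity, the cross-ratio is preserved, yielding $(X, Y; G_2, G_1) = -1$, and by symmetry of the cross-ratio $(G_1, G_2; X, Y) = -1$, as required.

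The main obstacle will be checking that the non-degeneracy hypotheses suffice to make the classical facts apply -- in particular, that $E, F, G$ are distinct and $G_1 \neq G_2$ (so the diagonal triangle is genuine) and that $EF$ is not tangent to the conic (so the induced involution is a true projectivity with well-defined cross-ratios). As a verification, a direct computation is available by parametrizing the conic as $t \mapsto (1:t:t^2)$ with touching points at parameters $t_1, \ldots, t_4$, in which case all four lines and the harmonic cross-ratio $-1$ can be read off explicitly from the tangent-line equations $t_i^2 X_0 - 2 t_i X_1 + X_2 = 0$.
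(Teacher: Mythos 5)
Your proof is correct, but it takes a genuinely different route from the paper. The paper's argument is a two-line normalization: choose an affine chart in which the quad is a square (possible by non-degeneracy of the quad), observe that every non-degenerate conic inscribed in a square is symmetric about both diagonals, and conclude that $\pav{S}(0)$ and $\pah{T}(0)$ meet the diagonals at the center and have the diagonals as their angle bisectors, which gives the harmonic relation. Your argument instead stays purely projective: you identify $\pav{S}(0)$ and $\pah{T}(0)$ as the polars of the diagonal points $E=AB\cap CD$ and $F=BC\cap DA$, get concurrency at $G=AC\cap BD$ from the self-polarity of the diagonal triangle, and transport the classical harmonic range $(E,F;G_1,G_2)=-1$ on $EF$ through the conjugate-point involution. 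Both are sound; the paper's proof is shorter but leans on the (true, though unjustified in the text) fact that each individual inscribed conic of a square inherits the square's diagonal symmetries, whereas yours is coordinate-free, makes the polarity structure explicit (which resonates with the pole--polar arguments used later in the paper, e.g.\ in Section~\ref{sec:isncribedquadrics}), and works over any field of characteristic $\neq 2$. The degeneracy checks you flag do all go through under the standing non-degeneracy assumption: no three vertices of the quad are collinear, so over $\R$ the diagonal triangle $G,G_1,G_2$ is a genuine triangle, and since $\mathrm{pol}(G)=EF$ with $G\notin EF$, the line $EF$ cannot be tangent to the conic, so the involution on $EF$ is a bona fide projectivity.
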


\begin{proof}
	We choose an affine chart such that the points of $P$ are the vertices of a square. Any non-degenerate inscribed conic of a square is symmetric with respect to both diagonals. Therefore, the four lines are concurrent. Moreover, the diagonals are the angle bisectors of $\pav{S}(0)$ and $\pah{T}(0)$. Consequently, the lines $\pav{S}(0)$ and $\pah{T}(0)$ harmonically separate the two diagonals. 
\end{proof}

A property of the touching nets $S,T$ that we will use repeatedly is the following.

\begin{lemma}\label{lem:touchingpointsQnet}
	Let $P \colon \Sigma_{a,b} \to \RP^n$ be a Kœnigs net with an instance of touching conics. Then $S$ and $T$ are Q-nets. 
\end{lemma}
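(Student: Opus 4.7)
I will show that $S$ is a Q-net; the argument for $T$ is symmetric. Fix a face of the domain of $S$ with corners $(i,j), (i+1,j), (i,j+1), (i+1,j+1)$ and define the two chords $L_1 := S(i,j) \vee S(i,j+1)$ and $L_2 := S(i+1,j) \vee S(i+1,j+1)$. Observe that $L_1$ lies in $\Pi(i,j)$ and $L_2$ lies in $\Pi(i+1,j)$, while these two planes intersect exactly in the shared edge line $\ell := P(i+1,j) \vee P(i+1,j+1)$. Consequently, coplanarity of the four $S$-points is equivalent to the coplanarity of $L_1$ and $L_2$, which (since they live in distinct planes meeting in $\ell$) is in turn equivalent to the two points $L_1 \cap \ell$ and $L_2 \cap \ell$ coinciding.

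The main tool is Lemma~\ref{lem:harmonicdiagonals}. Applied in the quad $(i,j)$, it tells us that the four concurrent lines $L_1$, $T(i,j) \vee T(i+1,j)$, and the two diagonals $P(i,j) \vee P(i+1,j+1)$, $P(i+1,j) \vee P(i,j+1)$ form a harmonic pencil through the diagonal intersection point of that quad. Projecting this pencil centrally onto $\ell$, the four lines are sent to the four points $X_1 := L_1 \cap \ell$, $T(i+1,j)$, $P(i+1,j+1)$ and $P(i+1,j)$ respectively (the last three are already on $\ell$). Since central projection preserves cross-ratios, these four points form a harmonic quadruple on $\ell$, so $X_1$ is the harmonic conjugate of $T(i+1,j)$ with respect to $P(i+1,j+1)$ and $P(i+1,j)$.

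Running the same argument in the quad $(i+1,j)$, now with $\ell$ regarded as its left edge, shows that $X_2 := L_2 \cap \ell$ is the harmonic conjugate of the touching point of $\inc(i+1,j)$ on $\ell$ with respect to the same pair $P(i+1,j+1)$, $P(i+1,j)$. The touching-conic hypothesis forces $\inc(i,j)$ and $\inc(i+1,j)$ to touch $\ell$ at the same point, namely $T(i+1,j)$, hence $X_1 = X_2$, the two chords $L_1$ and $L_2$ meet on $\ell$, and the four $S$-points are coplanar. The argument for $T$ is identical up to swapping rows and columns. The main conceptual step is the observation that projecting the harmonic pencil of Lemma~\ref{lem:harmonicdiagonals} onto the shared edge produces a point that depends only on the common touching point and the two endpoints of the edge, which forces it to coincide from both adjacent quads.
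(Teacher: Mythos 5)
Your argument is correct and is essentially the paper's own proof: both identify the intersection of the chord $S(i,j)\vee S(i,j+1)$ (respectively $S(i+1,j)\vee S(i+1,j+1)$) with the shared edge line $\ell$ as the harmonic conjugate of the common touching point $T(i+1,j)$ with respect to $P(i+1,j)$ and $P(i+1,j+1)$, via Lemma~\ref{lem:harmonicdiagonals} and intersection of the harmonic pencil with $\ell$, and conclude that the two chords meet on $\ell$. The only case you skip is when $\mathcal C(i,j)$ or $\mathcal C(i+1,j)$ is degenerate, where Lemma~\ref{lem:harmonicdiagonals} does not apply and $L_1$ or $L_2$ collapses to a point; but then two of the four $S$-points coincide and coplanarity is immediate, which is how the paper disposes of that case before running the harmonic argument.
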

\begin{proof}
	Let us begin with $S$, for which it suffices to consider $P$ defined on a $\Sigma_{2,1}$ patch. Suppose that $\mathcal{C}(0,0)$ or $\mathcal{C}(1,0)$ is degenerate. Then, $S(0,0) = S(0,1)$ or $S(1,0) = S(1,1)$ and thus the points $S(0,0), S(1,0)$, $S(1,1)$, $S(0,1)$ are coplanar. It only remains to consider the case when $\mathcal{C}(0,0)$ and $\mathcal{C}(1,0)$ are both non-degenerate.
    Define the points
	\begin{align}
		X &= \pav S(0) \cap \pav P(1), & X' &= \pav S(1) \cap \pav P(1).
	\end{align}
	Note that $P(1,0), P(1,1)$ are the intersection of the line $P(1)$ with the diagonals of the first quad (and the second quad). Thus, Lemma~\ref{lem:harmonicdiagonals} implies that
	\begin{align}
		\cro(P(1,0), T(1,0), P(1,1), X) & =-1, & \cro(P(1,0), T(1,0), P(1,1), X') & =-1.
	\end{align}
	Therefore, $X$ and $X'$ coincide, which in turn shows that the four points of $S$ are in a plane. The proof for $T$ proceeds analogously on a $\Sigma_{1,2}$ patch. 
\end{proof}

\begin{remark}\label{rem:koenigshyperplanesconics}
	Let us point out that if $P$ is an extensive Kœnigs net then in fact $S$ and $T$ arise quite naturally as follows. In \cite[Section~4]{AffolterFairleyKoenigsLaplace} we showed that $P$ is inscribed in a degenerate quadric $\mathcal U$ consisting of two hyperplanes $U_1$ and $U_2$. These two hyperplanes determine a pencil of hyperplanes $(U_s)_{s\in \RP^1}$ containing $U_1 \cap U_2$. Moreover, for each $U_s$ there is a hyperplane $U_t$ in the same pencil such that the cross-ratio $\cro(U_1,U_s,U_2,U_t)$ is $-1$. One can show that
	\begin{align}
		S(i,j) &= (P(i,j) \vee P(i+1,j)) \cap U_s, & T(i,j) &= (P(i,j) \vee P(i,j+1)) \cap U_t.
	\end{align}
\end{remark}

\section{A connection to Kœnigs binets}
\label{sec:binets}

In this section we prove an interesting result that arose naturally in the process of obtaining our main results, but which is otherwise independent from our main results. Hence, the reader interested primarily in the main results may skip this section.
The additional result concerns certain Laplace invariants of Q-nets, which were introduced in \cite{doliwa1997geometricToda}.

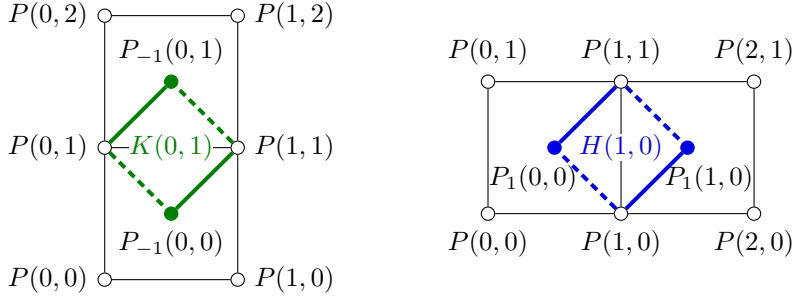
\begin{figure}[tb]
	\centering
	\begin{tikzpicture}[scale=1.75,baseline=(current bounding box.center)]
		\small
		\node[wvert, label={left:$P(0,0)$}] (p00) at (0,0) {};
		\node[wvert, label={left:$P(0,1)$}] (p01) at (0,1) {};
		\node[wvert, label={left:$P(0,2)$}] (p02) at (0,2) {};
		\node[wvert, label={right:$P(1,0)$}] (p10) at (1,0) {};
		\node[wvert, label={right:$P(1,1)$}] (p11) at (1,1) {};
		\node[wvert, label={right:$P(1,2)$}] (p12) at (1,2) {};
		\node[bvert, black!50!green, label={below:$P_{-1}(0,0)$}] (f00) at (0.5,0.5) {};
		\node[bvert, black!50!green, label={above:$P_{-1}(0,1)$}] (f01) at (0.5,1.5) {};			
		\draw[-]
		(p00) -- (p01) -- (p02) -- (p12) -- (p11) -- (p10) -- (p00)
		(p01) edge[] node [fill=white, inner sep=0, text=black!50!green] {$K(0,1)$}  (p11)
		;
		\draw[-, line width=1.5, black!50!green]
		(p01) edge[-] (f01) edge[densely dashed] (f00)
		(p11) edge[-] (f00) edge[densely dashed] (f01)
		;
	\end{tikzpicture}
	\hspace{10mm} 
	\begin{tikzpicture}[scale=1.75,baseline=(current bounding box.center)]
		\small
		\node[wvert, label={below:$P(0,0)$}] (p00) at (0,0) {};
		\node[wvert, label={above:$P(0,1)$}] (p01) at (0,1) {};
		\node[wvert, label={below:$P(2,0)$}] (p20) at (2,0) {};
		\node[wvert, label={below:$P(1,0)$}] (p10) at (1,0) {};
		\node[wvert, label={above:$P(1,1)$}] (p11) at (1,1) {};
		\node[wvert, label={above:$P(2,1)$}] (p21) at (2,1) {};
		\node[bvert, black!10!blue, label={[xshift=14,yshift=-1]below left:$P_{1}(0,0)$}] (f00) at (0.5,0.5) {};
		\node[bvert, black!10!blue, label={[xshift=-14,yshift=-1]below right:$P_{1}(1,0)$}] (f10) at (1.5,0.5) {};			
		\draw[-]
		(p00) -- (p10) -- (p20) -- (p21) -- (p11) -- (p01) -- (p00)
		(p10) edge[] node [fill=white, inner sep=0, text=black!10!blue] {$H(1,0)$}  (p11)
		;
		\draw[-, line width=1.5, black!10!blue]
		(p10) edge[-] (f10) edge[densely dashed] (f00)
		(p11) edge[-] (f00) edge[densely dashed] (f10)
		;
	\end{tikzpicture}	
	\caption{Combinatorial picture of the Laplace invariants $H$ (left) and $K$ (right).}
	\label{fig:combinatorialinvariants}
\end{figure}

\begin{definition}\label{def:discreteLaplaceinvariants}
	Let $P\colon \Z^2  \to \RP^n$ be a non-degenerate Q-net. The Laplace invariants $H\colon \Z^2 \to \R$ and $K\colon \Z^2 \to \R$ are defined as
	\begin{align*}
		H(i,j)&:= \cro(P(i,j), P_{1}(i,j), P(i,j+1), P_{1}(i-1,j)), \\
		 K(i,j) &:= \cro(P(i,j), P_{-1}(i,j) ,P(i+1,j) , P_{-1}(i,j-1)).
	\end{align*}
\end{definition}

As shown in Figure~\ref{fig:laplacetransform}, the points involved in the cross-ratio are on a line. Moreover, as shown in Figure~\ref{fig:combinatorialinvariants}, the Laplace invariant $H(i,j)$ is assigned to the vertical edge joining $(i,j)$ and $(i,j+1)$ whereas $K(i,j)$ is assigned to the horizontal edge joining $(i,j)$ and $(i+1,j)$. The following theorem shows that the pair $S,T$ have equal Laplace invariants per edge.

\begin{figure}[tb] 
	\centering
	\includegraphics[width=0.6\textwidth]{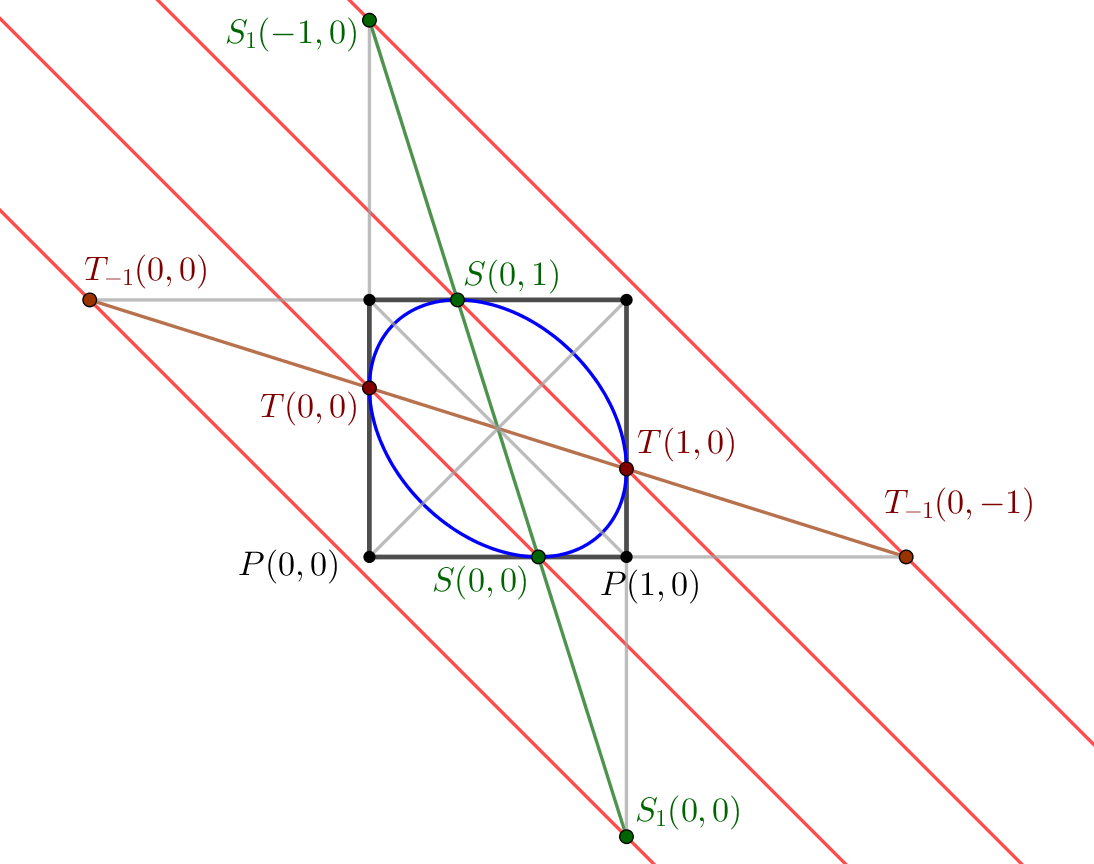}
	\caption{ The four brown points on the line $\pah{T}(0)$ determine $K^T(1,1)$ and the four green points on the line $\pav{S}(0)$ determine $H^S(1,1)$.}
	\label{figure:Koenigsbinetface}
\end{figure}

\begin{theorem}\label{thm:Keonigsbinet}
	Let $P \colon \Sigma \to \RP^n$ be a Kœnigs net with an instance of touching conics. Let $H^{S}, K^S$ and $H^{T}, K^T$ be the Laplace invariants of the Q-nets $S, T$ determined by the touching points. Then, 
	\begin{align}
		H^S(i,j) &= K^T (i,j), &
		H^T(i+1,j) &= K^S(i,j+1).
	\end{align}
\end{theorem}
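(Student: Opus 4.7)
The plan is to reduce each identity to a planar projective equality inside the plane $\Pi(i,j)$ of a single face of $P$, which is then verified via Pascal's theorem applied to the inscribed conic $\mathcal{C}(i,j)$.

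First I would locate the four Laplace points appearing in the cross-ratios. Since $S$ is a Q-net by Lemma~\ref{lem:touchingpointsQnet}, the Laplace transform
\[
S_1(i,j) \;=\; \bigl(S(i,j) \vee S(i,j+1)\bigr) \cap \bigl(S(i+1,j) \vee S(i+1,j+1)\bigr)
\]
is well-defined. Writing $\ell_1 := S(i,j) \vee S(i,j+1)$, the first of these two lines lies in $\Pi(i,j)$, whereas the second lies in $\Pi(i+1,j)$; hence $S_1(i,j) \in \Pi(i,j) \cap \Pi(i+1,j)$, which is generically the common edge $b := P(i+1,j) \vee P(i+1,j+1)$. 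Combined with $S_1(i,j) \in \ell_1$, this forces $S_1(i,j) = \ell_1 \cap b$. The same reasoning yields $S_1(i-1,j) = \ell_1 \cap d$, $T_{-1}(i,j) = \ell_2 \cap c$, and $T_{-1}(i,j-1) = \ell_2 \cap a$, where $\ell_2 := T(i,j) \vee T(i+1,j)$ and $a,b,c,d$ denote the bottom, right, top, and left edges of the face $(i,j)$.

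Setting $A = S(i,j)$, $B = T(i+1,j)$, $C = S(i,j+1)$, $D = T(i,j)$ for the four touching points of $\mathcal{C}(i,j)$, the first identity reduces to the planar cross-ratio equality
\[
\cro\bigl(A,\; \ell_1 \cap b,\; C,\; \ell_1 \cap d\bigr) \;=\; \cro\bigl(D,\; \ell_2 \cap c,\; B,\; \ell_2 \cap a\bigr)
\]
on the chords $\ell_1 = A \vee C$ and $\ell_2 = B \vee D$ of $\mathcal{C}(i,j)$. I would prove it via projection from the point $X^* := (A \vee D) \cap (B \vee C)$: by construction, this projection induces a projective isomorphism $\ell_1 \to \ell_2$ sending $A \mapsto D$ and $C \mapsto B$. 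The remaining compatibilities $\ell_1 \cap b \mapsto \ell_2 \cap c$ and $\ell_1 \cap d \mapsto \ell_2 \cap a$ follow from two applications of Pascal's theorem to $\mathcal{C}(i,j)$: the degenerate hexagon $A\,D\,B\,B\,C\,C$ has opposite sides $\{AD,\,BC\}$, $\{DB,\,c\}$, $\{b,\,CA\}$ meeting respectively at $X^*$, $\ell_2 \cap c$, and $\ell_1 \cap b$, so these three points are collinear; similarly, the hexagon $A\,D\,D\,B\,C\,A$ yields the collinearity of $X^*$, $\ell_1 \cap d$, and $\ell_2 \cap a$. Invariance of the cross-ratio under the resulting projectivity then gives $H^S(i,j) = K^T(i,j)$.

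The second identity $H^T(i+1,j) = K^S(i,j+1)$ is obtained by a structurally analogous argument, but localized around the vertex $P(i+1,j+1)$: the Laplace points $T_1$ and $S_{-1}$ can again be written as intersections of suitable chords with the shared edges of neighbouring face planes, and the needed collinearities come from Pascal applied to appropriate degenerate hexagons in the two adjacent touching conics meeting at $P(i+1,j+1)$. The principal obstacle is the genericity assumption $\Pi(i,j) \cap \Pi(i+1,j) = b$ used in the identification step, which can fail for non-extensive nets (for instance, a planar $P$); this is handled by lifting $P$ to an extensive K{\oe}nigs net via Lemma~\ref{lem:extensive} and descending using the invariance of cross-ratios, Laplace transforms, and touching-conic data under central projection.
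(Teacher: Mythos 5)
Your treatment of the first identity $H^S(i,j)=K^T(i,j)$ is correct and genuinely different from the paper's. The paper normalizes the quad to a square and uses the reflection in a diagonal to interchange the two quadruples of points; you instead identify $S_1(i,j)=\ell_1\cap b$, $S_1(i-1,j)=\ell_1\cap d$, $T_{-1}(i,j)=\ell_2\cap c$, $T_{-1}(i,j-1)=\ell_2\cap a$ and transport the cross-ratio by the perspectivity centered at $X^*=(A\vee D)\cap(B\vee C)$, the two required collinearities coming from Pascal on the degenerate hexagons $ADBBCC$ and $ADDBCA$. Both Pascal applications check out, and the identification of the Laplace points of $S$ and $T$ with these edge intersections is exactly what the paper establishes inside the proof of Lemma~\ref{lem:touchingpointsQnet} (there via the harmonic property of Lemma~\ref{lem:harmonicdiagonals}, which works uniformly without lifting; your face-plane-intersection route plus Lemma~\ref{lem:extensive} is an acceptable substitute). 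You should add a word on degenerate $\mathcal C(i,j)$, where $A=C$ and $B=D$ and both cross-ratios equal $1$ trivially, since Pascal does not apply there; this is minor.

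The second identity is where the proposal has a genuine gap. The claimed analogy --- that $T_1$ and $S_{-1}$ ``can again be written as intersections of suitable chords with the shared edges of neighbouring face planes'' --- fails. In the first case $S_1(i,j)$ is the intersection of two chords each lying in a single face plane, and those two face planes share the edge $P(i+1,j)\vee P(i+1,j+1)$; that is why $S_1(i,j)$ lands on an edge of $P$. By contrast, $S_{-1}(i,j)$ is the intersection of the lines $S(i,j)\vee S(i+1,j)$ and $S(i,j+1)\vee S(i+1,j+1)$, each of which joins touching points belonging to two \emph{different} faces and hence lies in no single face plane of $P$; the resulting point is not on any edge of $P$ and depends on the outer vertices $P(i,j)$, $P(i+2,j)$, $P(i+2,j+2)$, $P(i,j+2)$ (the parameters $\kappa,\lambda,\mu,\nu$ in the paper's proof). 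The identity $H^T(i+1,j)=K^S(i,j+1)$ is a statement about all four conics around the vertex $P(i+1,j+1)$: the paper proves it by choosing representative vectors adapted to that vertex, invoking the coplanarity of the four diagonal intersection points (a Kœnigs characterization) together with the Ceva property of the touching points, and computing that both cross-ratios equal $\frac{(\kappa-1)(\mu-1)}{(\lambda-1)(\nu-1)}$. Pascal applied to ``two adjacent touching conics'' cannot reach this, since no two of the four conics determine all four Laplace points involved, so the projectivity between the two carrier lines cannot be exhibited from two-conic incidences. As written, the second half of the proposal is a hope rather than a proof; you need either the paper's vertex computation or some genuinely four-face incidence argument to close it.
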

Before we give a proof, let us briefly discuss the statement of Theorem~\ref{thm:Keonigsbinet}. First of all, the analogous property for Kœnigs nets paired with their diagonal intersection nets was discovered recently in \cite[Theorem~5.15]{AffolterFairleyKoenigsLaplace} and independently in \cite{adtbinets}. This property is also used as a characterization for \emph{Kœnigs binets} in the language of the upcoming publication \cite{adtbinets} -- or the \emph{control net} of a \emph{Kœnigs checkerboard pattern} \cite{dellingercb}. Hence, the pair of touching nets $(S,T)$ provides a new example of such Kœnigs binets.

\begin{proof}[Proof of Theorem~\ref{thm:Keonigsbinet}]
	We need to prove two cases: the case that corresponding Laplace invariants live on edges of $S$ and $T$ that cross each other in a face of $P$, and the case where the edges of $S$ and $T$ cross at a vertex of $P$. We begin with the first case.
	
	Without loss of generality, we can choose an affine image of $\RP^n$ such that $P(i,j), P(i+1,j) , P(i+1,j+1), P(i, j+1)$ are the vertices of a square as shown in Figure~\ref{figure:Koenigsbinetface}. In the proof of Lemma~\ref{lem:touchingpointsQnet} we have observed that the points $S_{1}(i,j), S_{1}(i-1,j), T_{-1}(i,j), T_{-1}(i,j-1)$ are contained in the edge-lines of the square. Moreover, the same lemma shows that the four points on each edge-line have cross-ratio $-1$. Since the square is symmetric about the diagonal $P(i,j) \vee P(i+1,j+1)$, the reflection about this diagonal interchanges
	\begin{align}
		S(i,j) &\leftrightarrow T(i,j), & S_1(i-1,j) &\leftrightarrow T_{-1}(i,j-1),\\  S(i,j+1) &\leftrightarrow T(i+1,j), & S_1(i,j) &\leftrightarrow T_{-1}(i,j).
	\end{align}
	Thus, the two cross-ratios 
	\begin{align}
		H^S(i,j)&:= \cro(S(i,j), S_{1}(i,j), S(i,j+1), S_{1}(i-1,j)), \\
		K^T(i,j) &:= \cro(T(i,j), T_{-1}(i,j), T(i+1,j) , T_{-1}(i,j-1)),
	\end{align}
	are equal as the corresponding two quadruples of points involved are related by a reflection (which is a projective transformation). Alternatively, one can see that, due to the symmetry, point pairs are on parallel lines (red lines in Figure~\ref{figure:Koenigsbinetface}), which also shows that the cross-ratios are equal.
	
	Now for the second case.
	Consider a patch of $P$ as shown in Figure~\ref{figure:Koenigsbinetvertex}. Let $a,b,c,d,x \in \R^{n+1}$ be representative vectors for 
	\begin{align}
		P(i,j+1), P(i+1,j), P(i+2,j+1), P(i+1,j+2), P(i+1,j+1),
	\end{align}
	respectively. We can rescale $a,b,c,d$ such that $[a+b], [b+c], [c+d]$ are the intersection points of the diagonals of the corresponding quads. Then, $[d+a]$ is the intersection point of the diagonals of the last quad because the four diagonal intersection points are coplanar for any Kœnigs net \cite[Theorem 2.26]{BS2008DDGbook}. Strictly speaking, the last argument assumes some genericity. Instead, one may use that the fourth diagonal point is uniquely determined by a multi-ratio condition for the four diagonal points which are on the edge-lines of the quad with vertices $P(i,j+1), P(i+1,j), P(i+2,j+1), P(i+1,j+2)$ \cite[Theorem 2.25]{BS2008DDGbook}. We can still rescale $x$ such that 
    \begin{align}
		S(i,j+1)= [a+x].
	\end{align} 
    Furthermore, we use the observation \cite[Theorem 2.4]{bobenkofairley2021nets} that the two touching points $S(i,j+1)$, $T(i+1,j)$ and the diagonal intersection point $[a+b]$ form a Ceva configuration for the triangle with the vertices 
    \begin{align}
        P(i,j+1)=[a], \quad P(i+1,j+1) = [x], \quad P(i+1,j)=[b].
    \end{align}
    Then, a simple computation shows that $[a+b+x]$ is the concurrency point of the Ceva configuration, and that $T(i+1,j) = [b+x]$. With analogous arguments we obtain that 
	\begin{align}
		S(i+1,j+1) = [c+x], \qquad T(i+1,j+1) = [d+x].
	\end{align}
	Moreover, we may choose $\kappa, \lambda, \mu, \nu \in \R$ such that
	\begin{align}
		P(i,j) &= [a+b+\kappa x],&  P(i+2,j) &= [b+c+\lambda x],\\
		P(i+2,j+2) &= [c+d+\mu x], & P(i,j+2) &= [d+a+\nu x].
	\end{align}
	With all representative vectors now chosen, a direct computation shows that
	\begin{align}
		S_{-1}(i,j) &= [(1-\lambda)(a+x) + (\kappa-1)(c+x)],\\
		S_{-1}(i,j+1)&= [(1-\mu)(a+x) + (\nu-1)(c+x)],\\
		T_1(i,j)&= [(\nu -1)(b+x)+(1-\kappa)(d+x)],\\
		T_{1}(i+1,j)&= [(\mu -1)(b+x)+(1-\lambda)(d+x)].
	\end{align}
	With another computation we find that
	\begin{align}
		K^S(i,j+1) = \cro(S(i,j+1), S_{-1}(i,j+1) ,S(i+1,j+1) , S_{-1}(i,j)) = \frac{\kappa -1 }{\lambda -1} \frac{\mu -1}{\nu -1}.
	\end{align} 
	Finally, a last computation shows that 
	\begin{align}
		H^T(i+1,j) =  \cro(T(i+1,j), T_{1}(i+1,j), T(i+1,j+1), T_{1}(i,j)) = \frac{\kappa -1 }{\lambda -1} \frac{\mu -1}{\nu -1}.
	\end{align}
	Therefore, we have shown $K^S(i,j+1) = H^T(i+1,j)$ and thereby the claim of the theorem.
\end{proof}

\begin{figure}[tb] 
	\centering
	\includegraphics[width=0.85\textwidth]{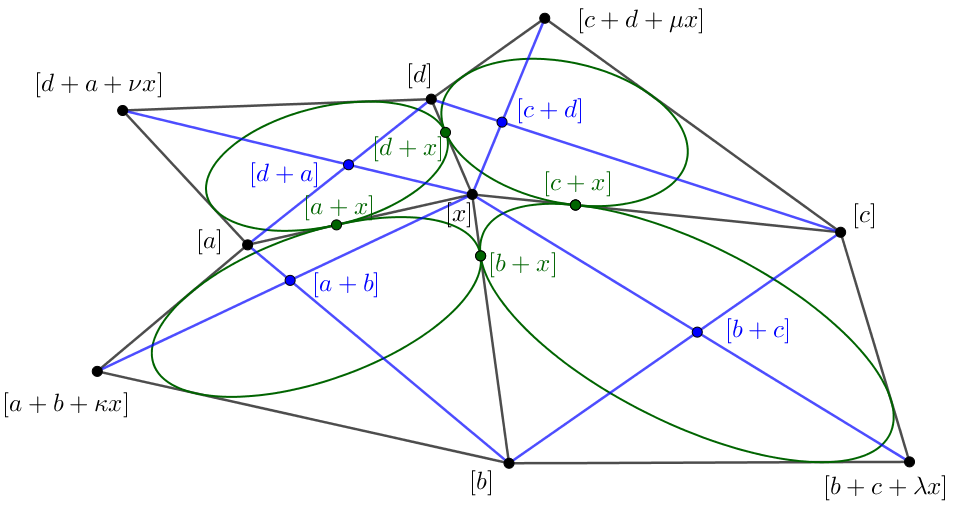}
	\caption{The choice of representative vectors used for the proof of Theorem~\ref{thm:Keonigsbinet}.}  
	\label{figure:Koenigsbinetvertex}
\end{figure}

\begin{remark}
	\emph{S-isothermic nets} \cite{bpdiscsurfaces, bhsminimal} are a discretization of surfaces in isothermic parametrization. One characterization of S-isothermic nets is as Q-nets with touching inscribed circles (there is a later generalization of S-isothermic nets in \cite{BS2008DDGbook} which we do not consider here). Hence, we can consider an S-isothermic net as a Kœnigs net $P$ with touching conics $\mathcal C$ given by the touching inscribed circles. By Lemma~\ref{lem:touchingpointsQnet}, the touching nets $S$ and $T$ are Q-nets. Moreover, we claim that $S$ and $T$ are actually \emph{circular nets}, that is the four points of each face are contained in a circle. To see this, note that two adjacent circles of $\mathcal C$ are contained in a sphere. Consequently, the four points of a quad of $S$ (or $T$) are contained in a plane and a sphere -- and thus in a circle. Combining the observation that $S$ and $T$ are circular nets with Theorem~\ref{thm:Keonigsbinet}, we see that $(S,T)$ are a Kœnigs binet consisting of two circular nets. On the one hand, circular nets are a discretization of \emph{curvature-line parametrizations} \cite{cdscircular, bobenkocircular}. On the other hand, curvature-line parametrizations that are Kœnigs nets are \emph{isothermic parametrizations}. Thus, it appears that the touching nets of S-isothermic nets also constitute a discretization of isothermic parametrizations.
\end{remark}

\section{The quadric gluing lemma} \label{sec:gluing}

Let us briefly revisit the theory of quadrics and pencils of quadrics before giving a gluing lemma (Lemma~\ref{lem:2quadricsdefinepencil}), which will be crucial in Section~\ref{sec:isncribedquadrics}. Recall (from Section~\ref{sec:basics}) that a quadric $\mathcal Q$ in $\RP^n$ is defined as the zero-set of a symmetric bilinear form $\p$, that is
\begin{align}
	\mathcal{Q} := \{[x] \in \RP^n \mid \p(x,x) = 0\}.
\end{align}
If $\p$ defines $\mathcal Q$ in this manner we say $\p$ is a \emph{corresponding bilinear form}. Moreover, if $\p' = \lambda \p$ for some (non-zero) $\lambda \in \R$ then $\p'$ and $\p$ define the same quadric. We say $\p'$ and $\p$ are the same \emph{up to rescaling}. However, even up to rescaling, a quadric does not always define its corresponding bilinear form uniquely. For example, if $\mathcal{Q} = \emptyset$, then two forms representing $\mathcal{Q}$ are not necessarily rescalings of each other. 
Fortunately, the quadrics that we need in the remainder of the paper fall in a special class of quadrics that \emph{do} determine their corresponding bilinear forms uniquely up to rescaling.

\begin{definition}
	A quadric $\mathcal Q \subset \RP^n$ is \emph{full-dimensional} if there is a corresponding bilinear form $\p$ such that the signature of $\p$
	\begin{enumerate}
		\item contains at least one minus and at least one plus sign, or
		\item contains exactly $n$ zeroes.
	\end{enumerate}
\end{definition}

Any full-dimensional quadric $\mathcal Q$ in $\RP^n$ is locally a manifold of dimension $n-1$ at non-singular points. Note that in the second case, the quadric is a hyperplane (of dimension $n-1$) and each point is singular.

\begin{lemma}\label{lem:fulldimquadricdeterminesform}
	A full-dimensional quadric $\mathcal Q$ in $\RP^n$ determines its corresponding bilinear form $\p$ uniquely up to rescaling.
\end{lemma}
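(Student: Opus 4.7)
The plan is to pass from the bilinear form $\p$ to the homogeneous quadratic polynomial $F(x) := \p(x,x)$ on $\R^{n+1}$. By polarization, $\p$ is recovered from $F$, so the lemma becomes the assertion that any homogeneous quadratic polynomial $G$ whose real zero set equals that of $F$ must be a scalar multiple of $F$. I would split into cases according to which of the two full-dimensionality conditions is met by $\p$.

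Suppose first that the signature of $\p$ consists of exactly $n$ zeroes and one nonzero entry. Then $F = L^2$ for a linear form $L$, and its real zero set is the hyperplane $\{L=0\}$. Choosing coordinates so that $L = x_0$, the vanishing of $G$ on $\{x_0 = 0\}$ forces every monomial in $G$ not involving $x_0$ to have zero coefficient, so $G = x_0 \cdot M$ for some linear form $M$. If $M$ were not a scalar multiple of $x_0$, then $\{M = 0\}$ would be a hyperplane distinct from $\{x_0 = 0\}$ contained in the real zero set of $G$, contradicting the assumption. Hence $M = \lambda x_0$ and $G = \lambda F$.

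Now suppose the signature has at least one plus sign and at least one minus sign. If the rank of $\p$ equals $2$, then $F$ factors over $\R$ as $F = L_1 L_2$ with $L_1, L_2$ linearly independent linear forms, and the real zero set of $F$ is the union of the two distinct hyperplanes $\{L_i = 0\}$. Applying the rank $1$ argument to each hyperplane separately gives $L_1 \mid G$ and $L_2 \mid G$ in the polynomial ring, and coprimality then yields $F \mid G$, so $G = \lambda F$ by a degree count. If instead the rank of $\p$ is at least $3$, then $F$ is irreducible over $\C$, since any factorization $F = L_1 L_2$ would force the rank to be at most $2$. The mixed signature guarantees a nonsingular real point $p$ of $\{F = 0\}$, and the implicit function theorem then exhibits the real zero set of $F$ locally as a real-analytic manifold of dimension $n$ near $p$. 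Such a real locus is Zariski-dense in the complex zero set of $F$, so $G$, which vanishes on this dense set, vanishes on the entire complex zero set. The Hilbert Nullstellensatz applied to the prime ideal $(F) \subset \C[x_0, \ldots, x_n]$ then gives $F \mid G$, and a degree and reality argument concludes $G = \lambda F$ for some real $\lambda$.

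The step I expect to be the main obstacle is the rank $\geq 3$ subcase, where the argument shifts from real to complex algebraic geometry. The Zariski density of the smooth real locus in the complex zero set is what converts the set-theoretic coincidence of the real zero loci of $F$ and $G$ into an algebraic divisibility, which then combines with the irreducibility of $F$ to yield the result. The remaining cases are essentially linear-algebraic consequences of vanishing on a hyperplane together with a degree count.
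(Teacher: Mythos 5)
Your proof is correct, but it takes a genuinely different route from the paper's. The paper argues entirely by linear algebra on the bilinear form: it diagonalizes $\p$ into an orthogonal basis $e_1,\dots,e_p;f_1,\dots,f_q;g_1,\dots,g_r$, uses that the singular locus of $\mathcal Q$ is intrinsic to the point set to kill all entries of $\p'$ involving the $g_i$, and then evaluates $\p'$ on explicit real isotropic vectors such as $[e_i\pm f_j]$ and $[e_i+e_j+\sqrt2 f_k]$ to force $\p'(e_i,f_j)=\p'(e_i,e_j)=\p'(f_i,f_j)=0$ and $\p'(e_i,e_i)=\lambda=-\p'(f_j,f_j)$; this is completely elementary and self-contained. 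You instead pass to the quadratic polynomial $F=\p(x,x)$ and case-split on rank: your rank $1$ and rank $2$ cases are clean polynomial algebra (vanishing on a hyperplane forces divisibility by the linear form, plus a degree count), and these are fine. Your rank $\geq 3$ case is also correct but imports real and complex algebraic geometry: irreducibility of $F$ over $\C$, Zariski density of the smooth real locus in the complex hypersurface, and the Nullstellensatz. The Zariski-density step, which you rightly flag as the crux, is a true standard fact but deserves a word of justification or a citation (e.g.\ via the holomorphic implicit function theorem: the real points near a smooth real point form a maximal totally real submanifold of the complex hypersurface, so a polynomial vanishing there vanishes on a Euclidean neighborhood in the irreducible variety, hence everywhere). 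What your approach buys is generality -- the same argument shows that any real hypersurface with a smooth real point determines its reduced real defining polynomial up to scale, not just quadrics -- whereas the paper's argument is shorter, avoids all machinery, and stays in the language of bilinear forms used throughout the rest of the paper.
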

\proof{
	If the signature of $\mathcal Q$ contains exactly $n$ zeroes, then the claim is obvious. Which leaves the case that $\mathcal Q$ has at least one minus and at least one plus sign in its signature. In this case, assume that there are two bilinear forms $\p,\p'$ corresponding to $\mathcal Q$. Let us assume that the signature of $\p$ is $(p,q,r)$, and that the vectors
	\begin{align}
		e_1, \dots, e_p; \quad f_1, \dots, f_q; \quad g_1, \dots, g_r;
	\end{align}
	form an orthogonal basis of $\R^{n+1}$ with respect to $\p$, and such that 
	\begin{align}
		\p(e_i,e_i) = 1; \quad \p(f_i,f_i) = -1; \quad \p(g_i, g_i) = 0;
	\end{align}
	for all $i$ where the statements make sense. The projectivization of the span of all $g_i$ vectors is the space of singular points of $\mathcal Q$. Moreover, singular points of a quadric are independent of choice of bilinear form. Therefore,
	\begin{align}
		\p'(g_i,e_j) = \p'(g_i,f_j) = \p'(g_i,g_j) = 0,
	\end{align}	
	for all $i,j$ as well. It remains to consider the non-singular part. Moreover, due to orthogonality we obtain that 
	\begin{align}
		\p(e_i \pm f_j, e_i \pm f_j) = 0,
	\end{align}
	for all $i,j$. Therefore, the corresponding point $[e_i \pm f_j]$ is in $\mathcal Q$. Consequently, $\p'(e_i,f_j) = 0$ for all $i,j$, and
	\begin{align}
		\p'(e_i,e_i) = \lambda = - \p'(f_j,f_j),
	\end{align}
	for all $i,j$ and some non-zero $\lambda \in \R$ (that does not depend on $i,j$). Finally, if $p > 1$ note that $[e_i + e_j + \sqrt2f_k]$ is in $\mathcal Q$ for all $i\neq j$ and $k$. Plugging $[e_i + e_j + \sqrt2f_k]$ into $\p'$ observe that $\p'(e_i, e_j) = 0$ for all $i\neq j$. Analogously, if $q>1$ one obtains that $\p'(f_i, f_j) = 0$ for all $i \neq j$. Hence, $\p' = \lambda \p$.
    \qed
}

Let us add that the proof of Lemma~\ref{lem:fulldimquadricdeterminesform} is essentially a higher-dimensional version of a corresponding proof for conics in unpublished lecture notes of Bobenko, Springborn and Techter.
With this lemma we are ready to introduce pencils of quadrics. Given two different quadrics $\mathcal Q_1$, $\mathcal Q_2$ with corresponding bilinear forms $\p_1, \p_2$
and $T = [t] \in \RP^1$, we define the quadric
\begin{align}
	\mathcal Q_{T} = \{[x] \in \RP^n \mid t_1\p_1(x,x) + t_2 \p_2(x,x) = 0\}.
\end{align}
The set $\{ \mathcal Q_{T} \mid T \in \RP^1 \}$ is called a \emph{pencil (of quadrics)}. If $\mathcal Q_1$ and $\mathcal Q_2$ are both full-dimensional quadrics, then the pencil is uniquely determined by $\mathcal Q_1$ and $\mathcal Q_2$.

We are ready to give the fundamental lemma that we call the ``quadric gluing lemma'', which will be very useful for the remainder of the paper.

\begin{lemma}\label{lem:2quadricsdefinepencil}
	Let $E$ and $F$ be two distinct hyperplanes in $\RP^n$. Let  $\mathcal{Q}_E$ be a full-dimensional quadric in $E$ and $\mathcal{Q}_F$ be a full-dimensional quadric in $F$. Suppose that $\mathcal{Q}_E$ and $\mathcal{Q}_F$ coincide in $E \cap F$ and are full-dimensional in $E \cap F$. Then, there is a unique pencil of quadrics in $\RP^n$ such that all quadrics $\mathcal Q_T$ in the pencil are full-dimensional and coincide in $E \cup F$ with $\mathcal{Q}_E \cup \mathcal{Q}_F$ unless $\mathcal Q_T = E\cup F$.
\end{lemma}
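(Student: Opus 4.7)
The plan is to realise the desired pencil as the projectivisation of a two-dimensional space of symmetric bilinear forms on $\R^{n+1}$, spanned by the reducible form $\ell_E \cdot \ell_F$ (where $\ell_E$ and $\ell_F$ are linear equations for $E$ and $F$) and an extension $\Phi$ of bilinear forms $\varphi_E, \varphi_F$ representing $\mathcal Q_E$ and $\mathcal Q_F$. Before building $\Phi$, I would invoke Lemma~\ref{lem:fulldimquadricdeterminesform} inside $E \cap F$: since the two given quadrics cut out the same full-dimensional quadric there, the restrictions $\varphi_E|_{E \cap F}$ and $\varphi_F|_{E \cap F}$ must be proportional, so after rescaling $\varphi_F$ I may assume they coincide.

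Next, I would produce $\Phi$ by a direct coordinate computation. Choosing coordinates $(x_0, \ldots, x_n)$ with $\ell_E = x_0$ and $\ell_F = x_1$, and writing $\Phi = \sum c_{ij} x_i x_j$, the two prescribed restrictions fix every coefficient $c_{ij}$ except $c_{01}$, which remains free. Hence $\Phi$ exists and is unique up to adding multiples of $\ell_E \ell_F$, and the proposed pencil is $\{t_1 \Phi + t_2 \ell_E \ell_F \mid [t_1 : t_2] \in \RP^1\}$. Since $\ell_E \ell_F$ vanishes on $E \cup F$, every member restricts to $\varphi_E$ on $E$ and to $\varphi_F$ on $F$, so it meets $E \cup F$ in exactly $\mathcal Q_E \cup \mathcal Q_F$, except at the parameter $[0 : 1]$, where the quadric is precisely $E \cup F$.

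The main technical obstacle I anticipate is verifying that \emph{every} member of the pencil is full-dimensional. The form $\ell_E \ell_F$ has signature with one $\texttt{+}$, one $\texttt{-}$ and $n-1$ zeros, so it is full-dimensional; it remains to treat the case $t_1 \neq 0$. If $\varphi_E$ has mixed signature, then so does any form restricting on $E$ to a nonzero multiple of $\varphi_E$, and full-dimensionality is immediate. The subtle case is when $\mathcal Q_E$ is a double hyperplane in $E$, i.e.\ $\varphi_E = L_E^2$ for some linear form $L_E$: here I would use that $\mathcal Q_E$ is full-dimensional inside $E \cap F$ to deduce $L_E|_{E \cap F} \neq 0$, which forces the common lift $L$ of $L_E$ and $L_F$ to $\R^{n+1}$ to lie outside $\mathrm{span}(\ell_E, \ell_F)$; a small diagonalisation of $L^2 + t\, \ell_E \ell_F$ in the three-dimensional coordinates spanned by $(\ell_E, \ell_F, L)$ then yields signature with two $\texttt{+}$'s and one $\texttt{-}$ (up to sign) for every $t \neq 0$, which is again full-dimensional.

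Finally, for uniqueness I would take any full-dimensional quadric $\mathcal Q'$ coinciding with $\mathcal Q_E \cup \mathcal Q_F$ on $E \cup F$ and let $\Phi'$ be a representative bilinear form. Lemma~\ref{lem:fulldimquadricdeterminesform} applied inside $E$ gives $\Phi'|_E = \lambda_E \varphi_E$, and similarly $\Phi'|_F = \lambda_F \varphi_F$. Rescaling $\Phi'$ so that $\lambda_E = 1$ and then applying the same lemma inside $E \cap F$, together with the previously arranged equality $\varphi_E|_{E \cap F} = \varphi_F|_{E \cap F}$, forces $\lambda_F = 1$. Then $\Phi' - \Phi$ vanishes on both $E$ and $F$ as a bilinear form; in the coordinates of the construction this forces every coefficient except $c_{01}$ to vanish, so $\Phi' - \Phi$ is a scalar multiple of $\ell_E \ell_F$. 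Consequently $\mathcal Q'$ lies in the constructed pencil, and any pencil satisfying the hypotheses of the lemma must coincide with the pencil spanned by $\Phi$ and $\ell_E \ell_F$.
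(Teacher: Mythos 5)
Your proof is correct, and it is genuinely more self-contained than the one in the paper. The paper delegates both the existence and the uniqueness of the pencil to an external result (\cite[Lemma 3.10]{bobenkofairley2023circularnets}, adapted to full-dimensional quadrics via Lemma~\ref{lem:fulldimquadricdeterminesform}), and only proves in-house that every member of the pencil is full-dimensional; you instead construct the pencil explicitly in adapted coordinates ($\ell_E = x_0$, $\ell_F = x_1$, with only $c_{01}$ undetermined) and derive uniqueness from the same coordinate bookkeeping, which makes the whole lemma stand on its own. The full-dimensionality step is also handled differently: the paper argues synthetically in the delicate case where $\mathcal Q_E$ and $\mathcal Q_F$ are double hyperplanes --- a semi-definite member would have all points singular, would contain both $\mathcal Q_E$ and $\mathcal Q_F$, and a dimension count then forces it to be a hyperplane, hence still full-dimensional --- whereas you compute the signature $(\mathtt{+}\mathtt{+}\mathtt{-}\mathtt{0}\cdots\mathtt{0})$ of $L^2 + t\,\ell_E\ell_F$ directly, which in fact shows slightly more (all members with $t\neq 0$ have mixed signature). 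Two small points to tidy up: in the ``subtle case'' you implicitly assume that \emph{both} $\varphi_E$ and $\varphi_F$ are squares of linear forms (if only one is, the mixed-signature argument applied to the other hyperplane already finishes the job), and you should note in passing that the member $L^2$ itself (the parameter $t=0$) is full-dimensional because its signature consists of exactly $n$ zeroes. Neither affects the validity of the argument.
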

\proof{
	The claim is a special case of \cite[Lemma 3.10]{bobenkofairley2023circularnets}, with the difference that we work with full-dimensional quadrics. Moreover, in \cite{bobenkofairley2023circularnets} quadrics are not viewed as point sets but as pairs of point sets and corresponding bilinear forms. This is not an issue for us since we work with full-dimensional quadrics, in which case the identification between quadrics and corresponding bilinear forms is canonical (due to Lemma~\ref{lem:fulldimquadricdeterminesform}). Hence, the same proof as in \cite{bobenkofairley2023circularnets} can be used to show the existence of the pencil in the claim. Moreover, uniqueness also follows from \cite{bobenkofairley2023circularnets}, since we assume that the restriction of $\mathcal Q_E$ and $\mathcal Q_F$ to $E\cap F$ is full-dimensional (and therefore not isotropic).
	
	It remains to show that every quadric in the pencil is full-dimensional. In the case $Q_T = E \cup F$ the quadric is clearly full-dimensional. If $\mathcal Q_E$ or $\mathcal Q_F$ are full-dimensional in the sense that the signature contains at least one minus and at least one plus sign, then $\mathcal Q$ is obviously also full-dimensional. The remaining case is that $\mathcal Q_E$ and $\mathcal Q_F$ both have signatures that consist of $n-1$ zeros and are thus $(n-2)$-dimensional projective subspaces. Let $\mathcal{Q}_T$ be a quadric in the pencil and let us assume that the signature of $\mathcal Q_T$ does not contain a plus and a minus. Consequently, every point of $\mathcal{Q}_T$ is singular. Additionally, by assumption $\mathcal Q_T$ contains both $\mathcal Q_E$ and $\mathcal Q_F$. Hence, the dimension of $\mathcal Q_T$ is at least $n-2$. However, if the dimension of $\mathcal Q_T$ were $n-2$, then $\mathcal Q_T$ would coincide with both $\mathcal Q_E$ and $\mathcal Q_F$. This would imply that $\mathcal Q_E$ coincides with $\mathcal Q_F$, which is only possible if both quadrics equal $E \cap F$. However, this is a contradiction with the assumption that the restriction of $\mathcal Q_E$ and $\mathcal Q_F$ is full-dimensional in $E\cap F$. Thus, the dimension of $\mathcal Q_T$ must be larger than $n-2$ and therefore $n-1$, which shows that $\mathcal Q_T$ is full-dimensional.\qed
	}

\begin{remark}
	Since we use the proof of \cite[Lemma 3.10]{bobenkofairley2023circularnets} in the proof of Lemma~\ref{lem:2quadricsdefinepencil}, let us point out a minor omission in the proof of \cite[Lemma 3.10]{bobenkofairley2023circularnets}. That proof constructs a $1$-parameter family of symmetric bilinear forms $\{\p_\lambda\}_{\lambda \in \R}$ representing all the quadrics in the pencil -- except $\mathcal{Q}_T = E \cup F$ which was omitted. The extra quadric appears by considering $\p = \p_{\lambda_1} - \p_{\lambda_2}$ for $\lambda_1 \neq \lambda_2$. 
\end{remark}

We use Lemma~\ref{lem:2quadricsdefinepencil} throughout the remainder of the paper to ``glue'' quadrics in smaller dimensional spaces together so as to obtain quadrics in larger dimensions. In all cases, the smallest quadrics used have signatures
\begin{align}
	\texttt{(+0)}, \texttt{(-0)}, \texttt{(++-)}, \texttt{(+--)}, \texttt{(+00)} \mbox{ or }  \texttt{(-00)},
\end{align} 
which are all full-dimensional. Thus, by using Lemma~\ref{lem:2quadricsdefinepencil} the larger quadrics are also full-dimensional. To avoid overloading the proofs with technicalities, we will not discuss the full-dimensionality of the quadrics involved in the remainder of the paper.
\section{Inscribed quadrics}\label{sec:isncribedquadrics}

In this section we investigate inscribed quadrics for finite $\Sigma_{a,b}$ patches of extensive Kœnigs nets, and prove our first main result. We also pave the way for later sections by observing certain properties of the singular points of inscribed quadrics.

\begin{figure}[tb] 
	\centering
	\includegraphics[height=0.4\textwidth]{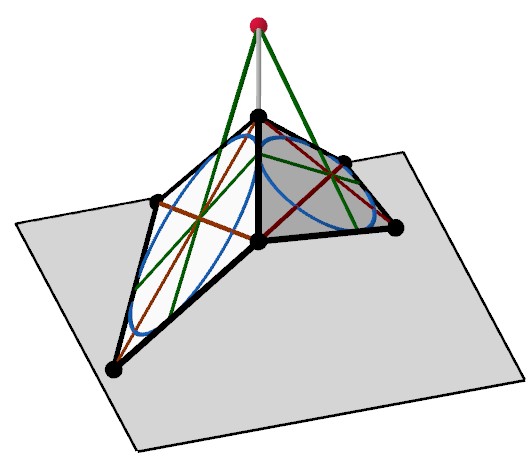}
	\includegraphics[height=0.4\textwidth]{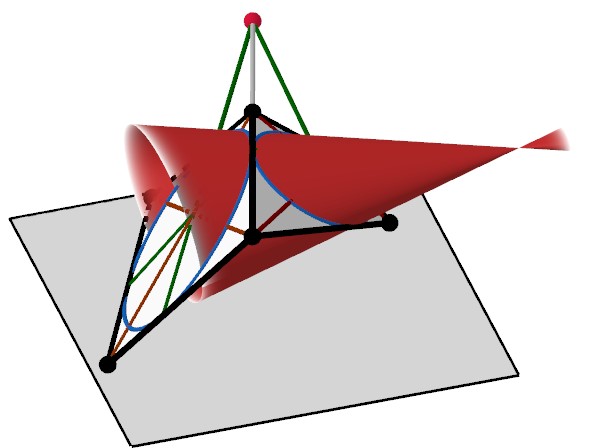}
	\caption{Two planar quads in $\RP^3$ with an instance of touching conics (blue).
         In this case the inscribed quadric $\mathcal{Q}$ (red) is a quadratic cone that contains the touching conics and the two lines $\pah{S}(j)$. The apex of $\mathcal{Q}$ is the point $S_{-1}(0,0)$.}
	\label{figure:conelemma}
\end{figure}

\subsection{Existence and Uniqueness}

Recall that $S$ and $T$ are the nets made up of the contact points of touching conics $\mathcal C$ of a Kœnigs net $P$. If $P$ is defined on a $\Sigma_{a,b}$ patch, then \emph{generically} we expect that the parameter spaces $\pah S(j)$ have dimension $a-1$, and $\pav T(i)$ have dimension $b-1$. The next lemma shows that if $P$ is extensive, then this is \emph{always} the case.

\begin{lemma}\label{lem:dimensiontspaces}
	Let $P\colon \Sigma_{a,b} \to \RP^n$ be an extensive Kœnigs net with touching conics $\mathcal C$. Then, each $\pah S(j)$ is $(a-1)$-dimensional and each $\pav T(i)$ is $(b-1)$-dimensional.
\end{lemma}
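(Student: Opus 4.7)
The plan is to proceed by induction on $a$, using the symmetric roles of $S$ and $T$ to reduce to the statement about $\pah{S}(j)$; the analogous claim for $\pav{T}(i)$ follows by exchanging the roles of the two coordinate directions. The base case $a = 1$ is immediate, since $\pah{S}(j)$ consists of the single point $S(0,j)$.

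For the inductive step, I would restrict $P$ to the sub-patch $P' := P|_{\Sigma_{a-1,b}}$ obtained by dropping the last column. As noted in Section~\ref{sec:resultsqnets}, this restriction is again an extensive Kœnigs net, and it inherits an instance of touching conics. The inductive hypothesis therefore gives that the $a-1$ touching points $S(0,j), \dots, S(a-2,j)$ span a subspace of dimension $a-2$ inside the join $V := P(0,j) \vee \dots \vee P(a-1,j)$, which has dimension $a-1$ by the extensivity of $P'$.

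It then remains to show that $S(a-1,j) \notin V$. Since $S(a-1,j)$ lies on the line $P(a-1,j) \vee P(a,j)$ and $P(a,j) \notin V$ by the extensivity of $P$, this reduces to checking $S(a-1,j) \neq P(a-1,j)$. The hard part will be this final verification, which I would handle by a case distinction on whether the relevant inscribed conic is degenerate. In the non-degenerate case, a tangency at the vertex $P(a-1,j)$ would either force the conic to be tangent to two distinct edge-lines through that vertex (making the vertex a singular point of the conic) or force the conic to meet an adjacent edge-line with combined multiplicity at least three, both of which contradict non-degeneracy. In the degenerate case, Remark~\ref{rem:degconics} identifies the inscribed conic with the double line through the two Laplace points of the quad, so $S(a-1,j)$ equals the Laplace point $\mathcal{L}_-P(a-1,j)$; this coincides with $P(a-1,j)$ only if three vertices of an adjacent quad are collinear, which contradicts Q-net non-degeneracy.

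As a complementary observation, Remark~\ref{rem:koenigshyperplanesconics} places the image of $S$ inside a fixed hyperplane $U_s \subset \RP^{a+b}$, which yields the upper bound $\dim \pah{S}(j) \leq a-1$ quite directly whenever $\pah{P}(j) \not\subset U_s$. However, the matching lower bound still appears to require the inductive argument above, and this lower bound is what the lemma really contributes.
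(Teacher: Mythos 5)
Your proof is correct, but it is organized differently from the paper's. The paper gives a direct, non-inductive argument: since each touching point $T(0,j)$ lies on the edge-line $P(0,j)\vee P(0,j+1)$ and differs from the vertices, the line $P(0,j)\vee T(0,j)$ contains $P(0,j+1)$, so iterating along the column gives $P(0,0)\vee \pav{T}(0) = \pav{P}(0)$; extensivity makes the right-hand side $b$-dimensional, and since $\pav{T}(0)$ is the join of only $b$ points it must be exactly $(b-1)$-dimensional. Your version instead inducts on the width of the patch, restricts to $\Sigma_{a-1,b}$, and shows the newly added touching point $S(a-1,j)$ escapes the span $V$ of the previous row because the only point of the edge-line $P(a-1,j)\vee P(a,j)$ lying in $V$ is the vertex $P(a-1,j)$. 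Both arguments ultimately rest on the same key fact — that a touching point never coincides with a vertex of its edge — which the paper dispatches in one sentence by appealing to the convention of Remark~\ref{rem:degconics}, and which you verify in more detail via the intersection-multiplicity argument (non-degenerate case) and the Laplace-point identification (degenerate case); both of your verifications are sound, though in the degenerate case the three collinear vertices you produce belong to the quad carrying the conic itself rather than to an ``adjacent'' quad. The paper's route is shorter and avoids induction; yours is more explicit about why the vertex-avoidance holds, which the paper leaves terse. Your closing observation that Remark~\ref{rem:koenigshyperplanesconics} only yields the upper bound is accurate, though that bound already follows from $\pah{S}(j)$ being the join of $a$ points.
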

\proof{

	We begin by proving the claim for $T(0)$. Recall that for touching conics we do not allow the conics to consist of the diagonals of $P$. Therefore, the touching point $T(0,j)$ does not coincide with the vertices $P(0,j)$ or $P(0,j+1)$ of the corresponding edge. Then, $P(0,0) \vee \pav T(0)$ contains $P(0,1)$ because $P(0,0) \vee T(0,0)$ is a line that contains $P(0,1)$. Iterating further, we see that $P(0,0) \vee \pav T(0) = \pav P(0)$. From the extensivity of $P$, it follows that the dimension of $\pav P(0)$ is $b$. Consequently, the dimension of $\pav T(0)$ is $b-1$. The corresponding claims for all $\pav T(i)$ and $\pah S(j)$ follow by analogous arguments.
    \qed
}

We now give a proof of our first main result, Theorem~\ref{thm:BSquadriclocalpatch}, which states that for extensive Kœnigs nets with a choice of touching conics $\mathcal C$ there is a unique inscribed quadric $\mathcal Q$ (see Definition~\ref{def:inscribedquadric}).

\begin{proof}[Proof of Theorem~\ref{thm:BSquadriclocalpatch}]
	The proof proceeds by induction over $a+b$. The base case $\Sigma_{1,1}$ is evident because $\mathcal{Q}$ is (and must be equal to) the inscribed conic $\mathcal C(0,0)$, which contains the tangency points
	\begin{align}
		T(i,0) = \pav{T}(i), \quad S(0,j) = \pah{S}(j),
	\end{align}
	respectively. Next, we prove the $\Sigma_{a,b}$ case, without loss of generality we assume $b \geq 2$ (the $\Sigma_{a,1}$ cases follow by symmetry from the $\Sigma_{1,b}$ cases).
	We consider the two restrictions $\South P$ and $\North P$ of $P$ to the $\Sigma_{a,b-1}$ patches
	\begin{align}
		\Sigma_a \times \{0,1,\dots,b-1\}  \quad \mbox{and} \quad \Sigma_a \times \{1,2,\dots,b\},
	\end{align}
	respectively. We also denote by $\NSmid P$ the restriction of $P$ to the $\Sigma_{a,b-2}$ patch
	\begin{align}
		\Sigma_a \times \{1,\dots,b-1\}.
	\end{align}
 	By induction, there are two unique inscribed quadrics $\North \mathcal Q$, $\South \mathcal Q$ for $\North P$ and $\South P$ respectively.
	Moreover, $\North \mathcal Q$ and $\South \mathcal Q$ agree in the join of the vertices of the image of $\NSmid P$, which we denote $\NSmid \pa P$. Indeed, $\North \mathcal Q \cap \South \mathcal Q$ is the unique inscribed quadric for $\NSmid P$, provided $b>2$. For $b=2$, we also have that $\North \mathcal Q$ and $\South \mathcal Q$ agree on $\NSmid \pa P$ because then $\North \mathcal{Q} \cap\South \mathcal{Q}  = \pah{S}(1)$. For all $b\geq 2$, Lemma~\ref{lem:2quadricsdefinepencil} implies that there is a unique pencil of quadrics containing both $\North \mathcal Q$ and $\South \mathcal Q$. The inscribed quadric $\mathcal{Q}$ that we are looking for must be in this pencil.
	
	Due to the extensivity of $P$ the dimensions of both $\North \pav{P}(0)$ and $\South \pav{P}(0)$ are $b-1$, and the dimension of $\pav P(0)$ is $b$. Additionally, due to Lemma~\ref{lem:dimensiontspaces} the dimension of both $\North \pav T(0)$  and $\South \pav T(0)$ is $b-2$, and the dimension of $\pav T(0)$ is $b-1$.
	Therefore, there is a point $Y \in \pav T(0)$ that is neither in $\North \pav T(0)$ nor $\South \pav T(0)$. Furthermore, $Y$ is neither in $\North \pav{P}(0)$ nor $\South \pav{P}(0)$. Consequently, there is a unique quadric $\mathcal{Q}$ in the pencil of quadrics that contains the point $Y$. 
	By induction, we get the inclusions
	\begin{align}
		\North \pav{T}(0) \subset \North \mathcal{Q} \subset \mathcal Q, \quad \South \pav{T}(0) \subset \South \mathcal{Q} \subset \mathcal Q.
	\end{align}
	Hence, $\pav T(0)$ contains two different isotropic codimension 1 spaces and an additional distinct point $Y \in \mathcal Q$, which implies that $\pav T(0) \subset \mathcal Q$.
	
	It remains to show that $\mathcal Q$ also contains the spaces $\pav T(i)$ for $i > 0$. We show this by induction over $i$, with the base case already proven. Without loss of generality, we assume $i=1$. Let us consider the lines
	\begin{align}
		\ell(j) := P_1(0,j) \vee P_{-1}(0,j).
	\end{align}
	We start with the case $a=1$, $b=2$, for which we distinguish three cases:
	\begin{enumerate}

    		\item Suppose $\mathcal{C}(0,0)$ and $\mathcal{C}(0,1)$ are both degenerate. Then, $\mathcal{C}(0,j)$ is the double line $\ell(j)$ for $j =0,1$. It follows that 
        \begin{align}
        P_{1}(0,0) = T(0,0) = T(1,0), \qquad P_{1}(0,1) = T(0,1) = T(1,1).
        \end{align}               
            Then, $\pav T(0) = \pav T(1)$ and thus $\pav T(1)$ is also contained in $\mathcal{Q}$.          
		
        \item Suppose one of the conics $\mathcal{C}(0,0)$ and $\mathcal{C}(0,1)$ is degenerate and the other is not. Without loss of generality, suppose that $\mathcal{C}(0,1)$ is degenerate 
        and thus equals the line $\ell(1)$. Then, $T(0,1)=T(1,1)$ and this point also equals the Laplace point
		\begin{align}\label{eq:rgikbewg}
			T_1(0,0) \coloneq \pav{T}(0) \cap \pav{T}(1).
		\end{align}
        The point $T_{1}(0,0)$
		is conjugate to the plane $P(0,1)\vee P(1,1) \vee P(0,2)$ because $T_1(0,0)$ is a singular point of $\mathcal C(0,1)$. 
		The point $T_1(0,0)$ is also conjugate to $T(0,0)$ since 
        \begin{align}
			\pav T(0) = T_1(0,0) \vee T(0,0)
		\end{align}
		is contained in $\mathcal{Q}$. So, $T_1(0,0)$ is conjugate to the join of $P(0,1)\vee P(1,1) \vee P(0,2)$ and $T(0,0)$, which equals $\pav{P}(0) \vee \pav{P}(1)$ since $P$ is extensive. In other words, $T_1(0,0)$ is a singular point of $\mathcal{Q}$. Therefore, $\mathcal{Q}$ is the quadratic cone containing $\mathcal{C}(0,0)$ and with apex $T_1(0,0)$. As $T(1,0)$ is in $\mathcal C(0,0)$ and $T_1(0,0)$ is singular, the line $\pav{T}(1) = T_1(0,0) \vee T(1,0)$ is also contained in $\mathcal{Q}$. 		
		
		\item Suppose $\mathcal{C}(0,0)$ and $\mathcal{C}(0,1)$ are both non-degenerate. Then, the Laplace point $T_{1}(0,0):= \pav{T}(0) \cap \pav{T}(1)$  is contained in $\mathcal{Q}$, and is distinct from $T(1,0)$ and $T(1,1)$. It follows that $\pav T(1)$ is contained in $\mathcal{Q}$ because it contains three different collinear points in $\pav T(1)$. 
	\end{enumerate}
Therefore, we have proven the case $b=2$. Now suppose that $b > 2$. Without loss of generality, we can assume that
	\begin{align} \label{eq:rgergerg}
	    \pav T(0) \neq \pav T(1),
	\end{align}
	because otherwise $\pav T(1) \subset \mathcal{Q}$ since $\pav T(0) \subset \mathcal{Q}$.
	Suppose that
	\begin{align}\label{eq:tehrh}
		 U:= \pav T(0) \cap \pav T(1) \subseteq \North \pav T(1) \cup \South \pav T(1).
	\end{align} 
	Since $U$, $\North \pav T(1)$ and $\South \pav T(1)$ are $(b-2)$-dimensional projective spaces, \eqref{eq:tehrh} implies that $U$ equals $ \North \pav T(1)$ or $ \South \pav T(1)$. Without loss of generality, suppose that $U = \North \pav T(1)$. By considering the restriction of this equality to the join of the vertices of the image of $\North P$, we obtain $\North \pav T(0) = \North \pav T(1)$. Then, $T(0,j) = T(1,j)$ for all $j=1, 
    \ldots, b-1$. This can only happen if the conics $\mathcal{C}(0,j)$ are degenerate for $j=1, \ldots, b-1$. From the assumption \eqref{eq:rgergerg}, we deduce that $T(0,0) \neq T(0,1)$ so that $\mathcal{C}(0,0)$ is non-degenerate. Then, from case 2 (in the list above), we obtain that $T(1,0) \vee T(1,1)$ is contained in $\mathcal{Q}$. But we also know that $\North \pav T(0) = \North \pav T(1)$ is contained in $\mathcal{Q}$. Therefore, 

    \begin{align}
          \pav T(1) = T(1,0) \vee \North \pav T(1)
    \end{align}
    is contained in $\mathcal{Q}$.
    It only remains to consider the case 
    \begin{align}\label{eq:rtheh}
			U \nsubseteq \North \pav T(1) \cup \South \pav T(1). 
	\end{align} 
	By induction (over $i$), we know that $\pav T(1)$ contains the two isotropic $(b-2)$-dimensional spaces $\North \pav T(1)$ and $\South \pav T(1)$. By (\ref{eq:rtheh}), $\pav T(1)$ contains additional points from $U$ that are also contained in $\mathcal{Q}$. Therefore, $\pav T(1)$ must be an isotropic subspace of $\mathcal Q$. 
\end{proof}

\begin{remark}
It can happen that the inscribed quadric $\mathcal{Q}$ in Theorem~\ref{thm:BSquadriclocalpatch} is degenerate. For example, suppose that $P_1$ and $P_{-1}$ are Laplace degenerate. Then, there is an instance of touching conics such that each inscribed conic is degenerate and $\mathcal{Q}$ is the (double) hyperplane joined by the vertices of $P_1$ and $P_{-1}$.
\end{remark}

\begin{remark}
	As discussed in Remark~\ref{rem:degconics}, every Q-net $P$ has two instances of touching conics $\mathcal C_1$, $\mathcal C_2$ consisting only of diagonals (which we usually exclude). Moreover, if $P$ is an extensive Kœnigs net, then it follows from \cite[Lemma~4.3]{AffolterFairleyKoenigsLaplace} that there are two hyperplanes $U_1,U_2$ such that $\mathcal C_1$ is contained in $U_1$ and $\mathcal C_2$ is contained in $U_2$. Each hyperplane can be seen as a degenerate inscribed quadric.
\end{remark}

Now that we have established the existence and uniqueness of inscribed quadrics $\mathcal Q$, we justify the term ``inscribed'' a bit more. With the next lemma, we show that the parameter spaces $\pah P(j)$ and $\pav P(i)$ are tangent to $\mathcal Q$. We also provide an example in Figure~\ref{figure:conelemma} for the $\Sigma_{2,1}$ case, where we see that each plane $\pah{P}(j)$ is tangent to $\mathcal{Q}$ along the line $\pah{S}(j)$. Analogously, each line $\pav{P}(i)$ is tangent to $\mathcal{Q}$ at the point $\pav{T}(i)$ (which is just $T(i,0)$).

\begin{lemma}\label{lem:BSquadriclocalpatch}
	Let $P\colon \Sigma_{a,b} \to \RP^{a+b}$ be an extensive Kœnigs net with an inscribed quadric $\mathcal{Q}$ with respect to touching conics $\mathcal C$. Then,
	\begin{enumerate}
		\item for all $i\in \Sigma_{a}$, $\pav{P}(i)$ is tangent to $\mathcal{Q}$ along $\pav{T}(i)$,
		\item for all $j\in \Sigma_{b}$, $\pah{P}(j)$ is tangent to $\mathcal{Q}$ along $\pah{S}(j)$.
	\end{enumerate}
\end{lemma}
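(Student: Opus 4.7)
The plan is to prove statement (1); statement (2) will follow by the symmetric argument, swapping the roles of rows and columns. Tangency of $\pav{P}(i)$ to $\mathcal{Q}$ along $\pav{T}(i)$ amounts to the polar condition $\pav{P}(i) \subseteq X^\perp$ for every $X \in \pav{T}(i)$. Since the polar relation is bilinear, it suffices to verify this for a spanning set, and $\pav{T}(i)$ is spanned by the points $T(i,0), T(i,1), \dots, T(i,b-1)$ by Lemma~\ref{lem:dimensiontspaces}. So I reduce the lemma to showing
\begin{align}
    \pav{P}(i) \subseteq T(i,j)^\perp \qquad \text{for each } j=0,\dots,b-1.
\end{align}

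For fixed $i,j$, I assemble $T(i,j)^\perp$ from two contributions. First, because $\pav{T}(i)\subseteq \mathcal{Q}$ is an isotropic subspace containing $T(i,j)$, every point of $\pav{T}(i)$ is conjugate to $T(i,j)$, so $\pav{T}(i) \subseteq T(i,j)^\perp$. Second, the relation $\mathcal{C}(i,j) = \mathcal{Q} \cap \Pi(i,j)$ from Definition~\ref{def:inscribedquadric} implies that the polar of $T(i,j)$ with respect to $\mathcal{Q}$, intersected with $\Pi(i,j)$, is the polar of $T(i,j)$ with respect to $\mathcal{C}(i,j)$; by the touching condition this polar line is precisely $P(i,j) \vee P(i,j+1)$. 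Hence $P(i,j) \in T(i,j)^\perp$, and combining with the first contribution gives
\begin{align}
    T(i,j)^\perp \supseteq \pav{T}(i) \vee P(i,j).
\end{align}

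It remains to check that $\pav{T}(i) \vee P(i,j) = \pav{P}(i)$. By extensivity $\dim \pav{P}(i) = b$ and by Lemma~\ref{lem:dimensiontspaces} $\dim \pav{T}(i) = b-1$, so the claim reduces to $P(i,j) \notin \pav{T}(i)$. If instead $P(i,j) \in \pav{T}(i)$, then since $T(i,j) \in \pav{T}(i)$ is distinct from $P(i,j)$ (we are in the non-degenerate case of Remark~\ref{rem:degconics}), the whole line $P(i,j) \vee T(i,j) = P(i,j) \vee P(i,j+1)$ lies in $\pav{T}(i)$; iterating this argument in both directions of $j$ puts all of $\pav{P}(i)$ inside $\pav{T}(i)$, contradicting the dimension count. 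This gives the required inclusion $\pav{P}(i) \subseteq T(i,j)^\perp$, and bilinearity extends it to all of $\pav{T}(i)$.

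The main (mild) obstacle is the corner case in which some $\mathcal{C}(i,j)$ is the degenerate double line through the Laplace points allowed by Remark~\ref{rem:degconics}: in that case the touching point $T(i,j)$ coincides with a singular point of the conic and the polar-line argument needs to be replaced by a direct verification that $P(i,j), P(i,j+1)$ are conjugate to $T(i,j)$ in $\Pi(i,j)$. Either this can be done by inspection of the degenerate configuration, or one can invoke the uniqueness of $\mathcal{Q}$ from Theorem~\ref{thm:BSquadriclocalpatch} together with a continuity/perturbation argument to reduce to the generic non-degenerate setting. No deeper difficulty is expected, since the bulk of the work is already contained in Theorem~\ref{thm:BSquadriclocalpatch}; this lemma is essentially a polar reformulation of the tangency data used to construct $\mathcal{Q}$.
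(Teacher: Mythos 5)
Your proof is correct, but it follows a genuinely different route from the paper's. The paper proves this lemma by induction on $a+b$, splitting $P$ into the restrictions $\South P$ and $\North P$, writing $\pav{P}(i) = \NSmid\pav{P}(i) \vee \pav{T}(i)$, and deducing that the intersection $\NSmid\pav P(i) = \North\pav P(i) \cap \South\pav P(i)$ is conjugate to the join $\pav T(i) = \South\pav T(i) \vee \North\pav T(i)$ from the inductive hypothesis on the two subpatches; this reuses the same decomposition machinery as the existence proof of Theorem~\ref{thm:BSquadriclocalpatch}. You instead verify the conjugacy directly on spanning sets: $\varphi(P(i,j'),T(i,j))=0$ follows from isotropy of $\pav T(i)$ together with the fact that the polar of the touching point $T(i,j)$ with respect to $\mathcal C(i,j)=\mathcal Q\cap\Pi(i,j)$ is the edge-line $P(i,j)\vee P(i,j+1)$, plus the dimension count $\pav T(i)\vee P(i,j)=\pav P(i)$ (which is essentially the content of the proof of Lemma~\ref{lem:dimensiontspaces} and could simply be cited from there). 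This is more elementary and avoids the induction entirely. Two small points: the identification of the polar with respect to $\mathcal C(i,j)$ with the restriction of the polar with respect to $\mathcal Q$ tacitly uses Lemma~\ref{lem:fulldimquadricdeterminesform} (the point set $\mathcal Q\cap\Pi(i,j)$ must determine the restricted form up to scale), which is consistent with the paper's stated conventions; and your degenerate corner case needs no perturbation argument -- when $\mathcal C(i,j)$ is the double line, $T(i,j)$ is a singular point of the rank-one restricted form, so its polar in $\Pi(i,j)$ is the whole plane and the conclusion $P(i,j)\in T(i,j)^\perp$ holds a fortiori.
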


\proof{
	We do induction over $a+b$. The base case $\Sigma_{1,1}$ is evident because then $\mathcal{Q}$ is the inscribed conic $\mathcal{C}(0,0)$. 
	 As before, we use $\South P$, $\North P$ and $\NSmid P$ to denote the restrictions of $P$ to 
	\begin{align}
		\Sigma_a \times \{0,1,\dots,b-1\},\quad \Sigma_a \times \{1,2,\dots,b\} \quad \mbox{and} \quad \Sigma_a \times \{1,\dots,b-1\}
	\end{align}
	respectively. Due to the Lemma~\ref{lem:dimensiontspaces}, we observe that 
	\begin{align}
		\pav{P}(i) = \NSmid \pav P(i) \vee \pav{T}(i).
	\end{align}
	Therefore, to show that $\pav{P}(i)$ is conjugate to $\pav{T}(i)$ it suffices to show that both $\NSmid \pav P(i)$ and $\pav{T}(i)$ are conjugate to $\pav{T}(i)$. As $\pav{T}(i)$ is contained in $\mathcal{Q}$ it follows that $\pav{T}(i)$ is conjugate to itself. Moreover, by induction we know that $\South \pav P(i)$ is conjugate to $\South \pav{T}(i)$ with respect to $\South \mathcal Q$. Since $\South \mathcal  Q \subset \mathcal{Q}$ we also know that $\South \pav P(i)$ is conjugate to $\South \pav{T}(i)$ with respect to $\mathcal Q$.
	Analogously, $\North \pav P(i)$ is conjugate to $\North \pav{T}(i)$ with respect to $\mathcal{Q}$. Therefore, the intersection
	\begin{align}
		\NSmid \pav P(i) = \North \pav P(i) \cap \South \pav P(i)
	\end{align}
	is conjugate to the join
	\begin{align}
		\pav{T}(i) = \South \pav T(i) \vee \North \pav T(i).
	\end{align}
	In conclusion, $\pav{P}(i)$ is conjugate to $\pav{T}(i)$. Using symmetric arguments, one shows that each $\pah{P}(j)$ is conjugate to $\pah{S}(j)$.
	 \qed
}

\begin{remark}\label{rem:twicetangetnconics}
    For our proof of the existence and uniqueness of inscribed quadrics, it was essential to only consider K{\oe}nigs nets that are extensive. In this remark we provide an example of what can be shown about inscribed quadrics when K{\oe}nigs nets are not extensive. For simplicity, this remark only considers a K{\oe}nigs nets $P \colon \Sigma_{2,2} \to \RP^3$. However, similar observations can be made more generally about K{\oe}nigs nets $P \colon \Sigma_{a,b} \to \RP^n$ with $n < a+b$.
    Let $P\colon \Sigma_{2,2} \to \RP^3$ be a K{\oe}nigs net with an instance of touching conics $\mathcal{C}$. Since the codomain is $\RP^3$, $P$ is not extensive. Therefore, Theorem~\ref{thm:BSquadriclocalpatch} is not applicable. Typically, there does not exist any quadric that contains the four conics $\mathcal{C}$. Instead, as we will explain, there exists a quadric $\mathcal{Q}$ that is tangent to the planes $\pah{P}(j)$, $\pav{P}(i)$ and such that the conics $\mathcal{C}$ are twice tangent to $\mathcal{Q}$.

    Consider an extensive lift $\tilde P: \Sigma_{2,2} \rightarrow \RP^4$ of $P$, which exists due to Lemma~\ref{lem:extensive}. This means that there is a central projection $\pi$ such that $P = \pi \circ \tilde P$. Let $\tilde{\mathcal{C}}$ be the touching conics of $\tilde{P}$ that project via $\pi$ to $\mathcal{C}.$ By Theorem~\ref{thm:BSquadriclocalpatch}, the touching conics $\tilde{\mathcal{C}}$ determine an inscribed quadric $\tilde{ \mathcal{Q}}$. As we will see in Section~\ref{sec:singular}, $\tilde{ \mathcal{Q}}$ is typically non-degenerate. Hence, let us assume that $\tilde{ \mathcal{Q}}$ is non-degenerate. Let $\mathcal{R}:= Z^\perp \cap \mathcal{\tilde Q}$, where $Z$ is the point that is the center of the projection $\pi$ and $Z^\perp$ is the polar hyperplane relative to $\mathcal{\tilde Q}$. Let $\mathcal{G}$ be the quadratic cone that is the join of $\mathcal{R}$ and $Z$. Then, each $\pah{\tilde{P}}(j) \vee Z$ is tangent to $\mathcal{G}$ along the line 
	\begin{align}
		k(j) := (\pah{\tilde{S}}(j) \cap Z^\perp) \vee Z.	
	\end{align}
	The intersection of $\mathcal{G}$ with $\RP^3$ is the required quadric $\mathcal Q$ in $\RP^3$. Indeed, each $\pah{P}(j)$ is tangent at the point where $k(j)$ intersects $\RP^3$ because $\pah{P}(j)$ equals the intersection of $\pah{\tilde{P}}(j) \vee Z$ and $\RP^3$. Symmetrically, each $\pav{P}(i)$ is also tangent to the quadric $\mathcal Q$. It only remains to explain that all the conics $\mathcal{C}$ are twice tangent to $\mathcal{Q}$. For this we observe that the plane of each $\mathcal{\tilde C}(i,j)$ intersects $Z^\perp$ in a line. This line intersects $\mathcal{R}$ in two points, which can be real or imaginary. The projection via $\pi$ of the two points are the two tangency points of $\mathcal{C}(i,j)$ with $\mathcal{Q}$.
\end{remark}

\subsection{Singular points} \label{sec:singular}

In this subsection, we study the locus of singular points of inscribed quadrics. We prove two lemmas that are essential for Section~\ref{sec:constrained}. Moreover, the two lemmas demonstrate that for extensive K{\oe}nigs nets with domain $\Sigma_{a,a}$ inscribed quadrics $\mathcal Q$ are typically non-degenerate.

\begin{lemma}\label{lem:singularpointsinscribedquadric}
	Let $P \colon \Sigma_{a,b} \to \RP^{a+b}$ be an extensive Kœnigs net with an instance of touching conics $\mathcal C$ and the corresponding inscribed quadric $\mathcal{Q}$. Then, any point in
	\begin{align}
		X &:= \bigcap_{j\in \Sigma_b} \pah{S}(j), & Y &:= \bigcap_{i\in \Sigma_a} \pav{T}(i),
	\end{align}
	is a singular point of $\mathcal{Q}$. Moreover,  $X$ is at least $(a-b-1)$-dimensional and $Y$ is at least $(b-a-1)$-dimensional.
\end{lemma}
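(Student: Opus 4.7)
The plan is to prove the singularity and dimension statements separately, with the claim for $Y$ following from that for $X$ by exchanging rows and columns. For the singularity, fix any $P'\in X$. By Lemma~\ref{lem:BSquadriclocalpatch}, each $\pah{P}(j)$ is tangent to $\mathcal{Q}$ along $\pah{S}(j)$, which by the symmetry of polarity is equivalent to $\p(\pah{S}(j),\pah{P}(j))=0$, i.e.\ $\pah{P}(j)\subseteq(\pah{S}(j))^\perp$. Since $P'\in\pah{S}(j)$ for every $j$, one obtains $\pah{P}(j)\subseteq(P')^\perp$ for every $j$. Taking the join and using the extensivity of $P$ yields $(P')^\perp\supseteq\bigvee_{j}\pah{P}(j)=\RP^{a+b}$, so $P'$ is singular in $\mathcal{Q}$.

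For the dimension bound, set $Z:=\bigcap_{j=0}^{b}\pah{P}(j)$. First I would establish $\dim Z\ge a-b$ when $a\ge b$ (for $a<b$ the inequality $a-b-1\le -1$ makes the claim vacuous). This is a rectangular-patch analogue of Lemma~\ref{lem:expllaplacetransform}, obtainable from iterated Laplace transforms: an easy induction on $k$ shows $P_{-k}(i,j)\in\bigcap_{l=j}^{j+k}\pah{P}(l)$, so the $a-b+1$ points $P_{-b}(i,0)$ with $i=0,\dots,a-b$ lie in $Z$, and in the generic situation span an $(a-b)$-dimensional subspace; the general case follows by upper semicontinuity of dimension under a small deformation of $P$.

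Next, by Lemma~\ref{lem:BSquadriclocalpatch} the singular locus of $\p|_{\pah{P}(j)}$ is precisely $\pah{S}(j)$, a hyperplane of $\pah{P}(j)$, which forces $\p|_{\pah{P}(j)}$ to have rank exactly $1$ (non-zero because $\pah{P}(j)\not\subseteq\mathcal{Q}$, and of kernel-codimension $1$). Since $Z\subseteq\pah{P}(j)$, the restriction $\p|_Z$ has rank at most $1$, so its zero locus on $Z$ is either all of $Z$ or a hyperplane of $Z$, and in either case has dimension at least $\dim Z-1$. A direct set-theoretic calculation identifies this locus with $X$, namely $Z\cap\mathcal{Q}=\bigcap_{j}(\pah{P}(j)\cap\mathcal{Q})=\bigcap_{j}\pah{S}(j)=X$; hence $\dim X\ge\dim Z-1\ge a-b-1$. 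The bound for $Y$ follows by repeating the argument with columns in place of rows.

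The main obstacle I anticipate is establishing the rectangular-patch extension $\dim Z\ge a-b$ in full generality. The adaptation of the square-patch proof of Lemma~\ref{lem:expllaplacetransform} should be straightforward in the generic case, but handling Laplace degenerations of $P$ requires care---most straightforwardly via the perturbation-and-upper-semicontinuity argument outlined above.
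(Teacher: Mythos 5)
Your argument for the singularity statement is the same as the paper's: a point of $X$ lies in every $\pah{S}(j)$, hence (by Lemma~\ref{lem:BSquadriclocalpatch}) is conjugate to every $\pah{P}(j)$, hence to $\bigvee_j\pah{P}(j)=\RP^{a+b}$ by extensivity. That part is fine.

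For the dimension bound you take a genuinely different route from the paper, and it contains a gap at exactly the point you flag. Your reduction to $\dim Z\ge a-b$ with $Z=\bigcap_j\pah{P}(j)$ is sound in outline (modulo one unproved assertion, see below), but your proposed proof of that inequality does not work as stated: the iterated Laplace transforms $P_{-1},\dots,P_{-b}$ need not exist for an arbitrary extensive K\oe{}nigs net (the paper only declares $\mathcal L_\pm P$ well-defined when the previous transform is non-degenerate), and the perturbation-plus-semicontinuity repair is itself a nontrivial claim (one must produce arbitrarily close extensive Q-nets for which all $b$ iterated transforms exist and for which the $a-b+1$ chosen points actually span). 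None of this machinery is needed: since consecutive parameter spaces satisfy $\dim\bigl(\pah{P}(j)\vee\pah{P}(j+1)\bigr)\le a+1$ (two consecutive rows form a Q-net on $\Sigma_{a,1}$), the projective dimension formula gives $\dim\bigcap_{j=0}^{k}\pah{P}(j)\ge a-k$ by induction, so $\dim Z\ge a-b$ directly. Separately, your identification $\pah{P}(j)\cap\mathcal Q=\pah{S}(j)$ requires $\pah{P}(j)\not\subseteq\mathcal Q$, which you assert without proof; it is true (no vertex $P(i,j)$ lies on $\mathcal Q$, because $\mathcal Q\cap\Pi(i,j)=\mathcal C(i,j)$ and the vertices of a quad never lie on its inscribed conic under the paper's conventions), but it deserves an argument.

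For comparison, the paper avoids $Z$ and the quadric altogether in the dimension count: it applies the same elementary intersection-dimension iteration directly to the spaces $\pah{S}(j)$, using that each is $(a-1)$-dimensional (Lemma~\ref{lem:dimensiontspaces}) and that $S$ is a Q-net (Lemma~\ref{lem:touchingpointsQnet}), so that consecutive $\pah{S}(j)$ join at most $a$ dimensions and the intersection of $b+1$ of them has dimension at least $a-b-1$. Your approach, once the two gaps above are patched, is also valid and has the mild merit of explaining $X$ as the zero locus of a rank-$\le 1$ form on $Z$; but it is longer and leans on Lemma~\ref{lem:BSquadriclocalpatch} where the paper needs only the combinatorics of the touching-point net $S$.
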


\begin{proof} 
	Let $A$ be a point in $X$. By Lemma~\ref{lem:BSquadriclocalpatch}, $A$ is conjugate to each $\pah{P}(j)$. Due to the extensivity of $P$ we have
	\begin{align}
		\RP^{a+b} = \bigvee_{j \in \Sigma_b} \pah{P}(j).
	\end{align}
	Consequently, $A$ is a singular point of $\mathcal{Q}$ because it is conjugate to all of $\RP^{a+b}$. Analogously, any point in $Y$ is a singular point.

	By Lemma~\ref{lem:dimensiontspaces}, we have that each $\pah{S}(j)$ is $(a-1)$-dimensional. Since $S$ is a Q-net by Lemma~\ref{lem:touchingpointsQnet}, we have that the join of $\pah{S}(j)$ and $\pah{S}(j+1)$ is at most $a$-dimensional. Equivalently, the intersection $\pah{S}(j) \cap \pah{S}(j+1)$
	is at least $(a-2)$-dimensional. Taking the intersection of three consecutive $\pah{S}(j)$ spaces, we may use the same arguments to see that their join is at most $(a+1)$-dimensional and their intersection is at least $(a-3)$-dimensional. Iterating further, we obtain that $X$ is at least $(a-b-1)$-dimensional. By analogous arguments, we obtain that $Y$ is at least $(b-a-1)$-dimensional.
\end{proof}

If $a > b$, then $X$ in Lemma~\ref{lem:singularpointsinscribedquadric} is non-empty and thus $\mathcal Q$ is degenerate. Analogously, $\mathcal Q$ is also degenerate if $a <b$. Therefore, every inscribed quadric of a Kœnigs net defined on a non-square patch is degenerate. For square patches the situation is different. The next lemma provides us with some insights in that regard.

\begin{lemma}\label{lem:singularpointsinscribedquadricnew}
Let $P \colon \Sigma_{a,b} \to \RP^{a+b}$ be an extensive Kœnigs net with an instance of touching conics $\mathcal C$ and the corresponding inscribed quadric $\mathcal{Q}$. Suppose that $a \geq b$. If there is a singular point of $\mathcal{Q}$ that is not contained in	
\begin{align}
		X &:= \bigcap_{j\in \Sigma_b} \pah{S}(j),
	\end{align} 
then $X$ is at least $(a-b)$-dimensional.
\end{lemma}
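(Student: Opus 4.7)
The plan is to assume there is a singular point $Z$ of $\mathcal Q$ with $Z \notin X$ and to derive $\dim X \geq a - b$ by studying how $Z$ interacts with a restriction of $\mathcal Q$ to a parameter space. Write $N$ for the singular locus of $\mathcal Q$. Since $Z \notin X = \bigcap_{j \in \Sigma_b} \pah S(j)$, there exists some $j_0 \in \Sigma_b$ with $Z \notin \pah S(j_0)$.

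The first step is to analyse the restricted quadric $\mathcal Q_{j_0} := \mathcal Q \cap \pah P(j_0)$ viewed in $\pah P(j_0) \cong \RP^a$. By Lemma~\ref{lem:BSquadriclocalpatch}, $\pah P(j_0)$ is tangent to $\mathcal Q$ along $\pah S(j_0)$, which in terms of the bilinear form $\p$ of $\mathcal Q$ means that every point of $\pah S(j_0)$ is a singular point of $\mathcal Q_{j_0}$. Since $\pah S(j_0)$ is already a hyperplane of $\pah P(j_0)$, and a quadric in $\RP^a$ whose singular locus contains a hyperplane must either coincide with that hyperplane (taken with multiplicity two) or be the entire ambient space, $\mathcal Q_{j_0}$ is either the double hyperplane $\pah S(j_0)$ or the whole $\pah P(j_0)$. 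In the double hyperplane case the singular locus of $\mathcal Q_{j_0}$ equals $\pah S(j_0)$, and hence every global singular point lying in $\pah P(j_0)$ must also lie in $\pah S(j_0)$, giving $\pah P(j_0) \cap N \subseteq \pah S(j_0)$.

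Next I would apply the standard dimension formula $\dim V^\perp = (a+b) - \dim V + \dim(V \cap N)$ to $V = \pah P(j_0)$. The join $\pah S(j_0) \vee \{Z\}$ lies in $\pah P(j_0)^\perp$, because $\pah S(j_0) \subseteq \pah P(j_0)^\perp$ by tangency and $Z \in \pah P(j_0)^\perp$ since $Z$ is singular. This join has dimension $a$ because $Z \notin \pah S(j_0)$, so $\dim \pah P(j_0)^\perp \geq a$ and therefore $\dim(\pah P(j_0) \cap N) \geq a - b$. Combined with the inclusion from the previous step this yields the intermediate bound
\begin{equation*}
    \dim(\pah S(j_0) \cap N) \geq a - b.
\end{equation*}
The exceptional case $\pah P(j_0) \subseteq \mathcal Q$ is handled separately, by observing that such a maximal-dimension isotropic parameter space forces the neighbouring $\pah S(j_0 \pm 1)$ to collapse onto $\pah S(j_0)$, which already produces the required bound on $X$.

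The final and hardest step is to promote the local bound on $\pah S(j_0) \cap N$ to a bound on $X$ itself. I would proceed by induction on $b$. The base case $b = 1$ is handled directly: if $W \in \pah S(0) \cap N$, a face-by-face analysis using Remark~\ref{rem:degconics} shows that the inscribed conics along the row through $W$ must be degenerate and hence double lines through their Laplace points, and this collapses $\pah S(0) \cap N$ into $\pah S(1)$. For $b \geq 2$, I would restrict to $\South P$ and $\North P$ on $\Sigma_{a, b-1}$ with inscribed quadrics $\South \mathcal Q$ and $\North \mathcal Q$, apply the inductive hypothesis to each, and then glue the conclusions using the pencil description of $\mathcal Q$ in terms of $\South \mathcal Q$ and $\North \mathcal Q$ given in the proof of Theorem~\ref{thm:BSquadriclocalpatch}. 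The main obstacle is that $Z$ need not lie in $\South \pa P$ or in $\North \pa P$, so in order to apply the inductive hypothesis one has to transfer the singularity of $Z$ into a singularity of at least one of $\South \mathcal Q, \North \mathcal Q$ through the pencil structure, which requires careful case analysis.
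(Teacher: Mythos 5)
Your opening steps are correct and in fact reach the local information by a cleaner route than the paper does: from the tangency in Lemma~\ref{lem:BSquadriclocalpatch} you correctly deduce that $\mathcal Q \cap \pah{P}(j_0)$ is the double hyperplane $\pah{S}(j_0)$, and the polar-dimension formula then yields $\dim(\pah{S}(j_0) \cap N) \geq a-b$. (Two small remarks: the exceptional case $\pah{P}(j_0) \subseteq \mathcal Q$ is actually impossible, since it would force an inscribed conic $\mathcal C(i,j_0)$ to contain an entire edge-line, which no inscribed conic in the sense of Remark~\ref{rem:degconics} can do; your claimed "collapse of $\pah{S}(j_0\pm1)$" for that case is both unjustified and unnecessary. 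Also, strictly you should note that a singular point of $\mathcal Q$ lying in $\pah{P}(j_0)$ is singular for the restricted quadric, which is what gives $N \cap \pah{P}(j_0) \subseteq \pah{S}(j_0)$.) This intermediate bound is a genuinely different and valid way to produce a large set of singular points inside one parameter space of $S$; the paper instead gets the analogous information only in the base case $b=1$, by joining a singular point to the isotropic hyperplane $\pah{S}(0)$.

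However, the lemma asserts a lower bound on $X = \bigcap_{j} \pah{S}(j)$, and the passage from your bound on $\pah{S}(j_0) \cap N$ to a bound on $X$ is where the entire difficulty sits --- and you have not carried it out. Note that $N \cap \pah{S}(j_0)$ need not be contained in $\pah{P}(j)$ for $j \neq j_0$, so you cannot simply intersect your singular set with the other $\pah{S}(j)$. Your final paragraph only restates the paper's strategy (induction on $b$, restriction to $\South P$ and $\North P$) and then explicitly flags, without resolving, the key obstacle that the singular point need not lie in $\South \pa P$ or $\North \pa P$. The paper's proof resolves exactly this point: when the singular point $A$ is not in $\South \pa P$, it uses Lemma~\ref{lem:singularpointsinscribedquadric} to show that $\South X$ is conjugate to $A \vee \South \pa P = \RP^{a+b}$ and hence consists entirely of singular points of $\mathcal Q$; it then splits into the sub-cases $\South X \subseteq \North X$ and $\South X \not\subseteq \North X$, in the latter case feeding one of these new singular points into the inductive hypothesis for $\North P$; and in every branch it converts a bound on $\South X$ or $\North X$ into a bound on $X$ via the estimate $\dim X \geq \dim \South X - 1$, which rests on the fact that $S$ is a Q-net so that $\pah{S}(b-1) \vee \pah{S}(b)$ exceeds $\pah{S}(b)$ by at most one dimension. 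None of these arguments appear in your proposal, and they do not follow formally from your intermediate bound. As it stands, the proof is incomplete at its essential step.
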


\begin{proof}
    We will prove the claim by induction on $b$. We start with the case $b=1$, so the ambient space is $\RP^{a+1}$. Let $A$ be a singular point of $\mathcal{Q}$ that is not contained in $X$. Then, $A \vee \pah S (0)$ is an $a$-dimensional space since $\pah S (0)$ is $(a-1)$-dimensional (by Lemma~\ref{lem:dimensiontspaces}) and does not contain $A$ by assumption. Moreover, $A \vee \pah S (0)$ is contained in $\mathcal{Q}$ because $\pah S (0)$ is contained in $\mathcal{Q}$ and $A$ is a singular point of $\mathcal{Q}$. Therefore, $\mathcal{Q}$ contains a hyperplane, which implies that $\mathcal Q$ is a double hyperplane or the union of two hyperplanes. Consequently, every inscribed conic $\mathcal{C}(i,0)$ is degenerate -- which means that each conic is the (double) line joining the two Laplace points of the corresponding quad.  In particular, $P_{-1}(i,0)=S(i,0) = S(i,1)$ for all $i$. Thus, $\pah{S}(0) = \pah{S}(1)$. Hence, $X$ is $(a-1)$-dimensional. Therefore, $X$ is indeed at least $(a-b)$-dimensional when $b=1$.

    Now suppose that $b \geq 2$. Consider the two restrictions $\South P$ and $\North P$ of $P$ to the $\Sigma_{a,b-1}$ patches 
	\begin{align}
		\Sigma_a \times \{0,1,\dots,b-1\}  \quad \mbox{and} \quad \Sigma_a \times \{1,2,\dots,b\},
	\end{align} 
	respectively. Let $\North \pa P$ be the join of the image of $\North P$ and let $\South \pa P$ be the join of the image of $\South P$. Define
    \begin{align}
		\South X \coloneq \bigcap_{j=0}^{b-1}\pah{S}(j) \quad \mbox{and} \quad  \North X \coloneq \bigcap_{j=1}^{b}\pah{S}(j).
	\end{align}
	
    Let $A$ be a singular point of $\mathcal{Q}$ that is not contained in $X$. We consider two cases. First, suppose that $A \in \South \pa P$. Then, by applying the induction hypothesis to $\South P$, $\South X$ has dimension at least $a-b+1$. Moreover, consider the space
    \begin{align}
    	U = \pah{S}(b-1) \vee \pah{S}(b).
    \end{align}
    As $S$ is a Q-net, the dimension of $U$ is at most one larger than the dimension of $\pah{S}(b)$. Also, we have that
    \begin{align}
    	X = \South X \cap \pah{S}(b).
    \end{align}
    Since both $\South X$ and $\pah{S}(b)$ are both contained in $U$, we get the inequality
    \begin{align}
        \dim X \geq \dim \South X - 1.
    \end{align}
    Hence, $X$ has dimension at least $a-b$.

    For the second case, suppose that $A \notin \South \pa P$. Moreover, by applying Lemma~\ref{lem:singularpointsinscribedquadric} to $\South P$, we have that $\South X$ is conjugate to $\South \pa P$. However, $\South X$ is also conjugate to $A \notin \South \pa P$ since $A$ is a singular point. Therefore, $\South X$ is conjugate to $A \vee  \South \pa P$ which is equal to all of  $\RP^{a+b}$. Hence, every point in $\South X$ is a singular point of $\mathcal{Q}$.

	Let us distinguish two cases. First, we consider the case that $\South X$ is not contained in $\North X$. Consequently, there is a singular point of $\South X$ that is not contained in $\North X$. Moreover, this singular point is contained in $\North P^\vee$ because $b \geq 2$. So, by applying the induction hypothesis to $\North P$, it follows that $\North X$ is at least $(a-b+1)$-dimensional. Then, using a similar dimension counting argument as above, we see that $X$ is at least $(a-b)$-dimensional.
	
	The second case is that $\South X$ is contained in $\North  X$. Then $X = \South X \cap \North X$ is at least $(a-b)$-dimensional since $\South X$ is at least $(a-b)$-dimensional.
\end{proof}

Consider a Kœnigs net defined on a square patch, that is on $\Sigma_{a,b}$ with $a=b$. In this case, Lemma~\ref{lem:singularpointsinscribedquadric} states that the dimension of $X$ has at least dimension $-1$, which is a void statement. However, if $X$ is a point or a larger dimensional space, then this implies that the $\pah S(j)$ spaces have a non-empty intersection. It is not hard to see that this puts constraints on the choice of $\mathcal C$ relative to $P$. In this sense, if $X$ is non-empty then $\mathcal C$ is a ``non-generic'' choice of touching conics. Moreover, if $X$ is empty, Lemma~\ref{lem:singularpointsinscribedquadricnew} shows that there is no singular point of $\mathcal Q$ that is not contained in $X$ -- and therefore $\mathcal Q$ has no singular points. Hence, generically $\mathcal Q$ is non-degenerate for square patches. Although we think it would be interesting to know how many instances of non-generic touching conics exist for a given Kœnigs net, this is not relevant for our main results.

\subsection{Diagonal intersection nets}

In this subsection we introduce diagonal intersections nets and we investigate how the Laplace transforms of the diagonal intersection nets of K{\oe}nigs nets are related to inscribed quadrics.

\begin{definition}\label{defn:diagintersectionnet}
	Let $P\colon \Sigma_{a,b} \to \RP^n$ be a non-degenerate Q-net. The \emph{diagonal intersection net $D$ of $P$} is the map
	\begin{align}
		D\colon  \Sigma_{a-1,b-1} \rightarrow \RP^n, \quad (i,j) \mapsto \big(P(i,j) \vee P(i+1,j+1)\big) \cap \big(P(i+1,j)\vee P(i,j+1)\big).
	\end{align}	
\end{definition}

For any Kœnigs net $P$, the diagonal intersection net is a Q-net \cite[Theorem 2.26]{BS2008DDGbook}. Thus, we may consider the Laplace sequence of $D$.

\begin{lemma}\label{lem:concurrencyDST}
	Let $P \colon \Sigma_{2,2} \to \RP^n$ be a Kœnigs net with touching conics $\mathcal{C}$. Let the diagonal intersection net $D\colon \Sigma_{1,1} \to \RP^n$ of $P$ be non-degenerate. Then,
	\begin{align}
		D_1(0,0)  \in \pav{T}(1) \quad \mbox{and} \quad D_{-1}(0,0)  \in \pah{S}(1).
	\end{align}
\end{lemma}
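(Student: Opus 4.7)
The plan is to exploit the classical Ceva-type property of a conic inscribed in a quadrilateral, together with the fact that the touching nets $S$ and $T$ are Q-nets (Lemma~\ref{lem:touchingpointsQnet}). By symmetry, it suffices to focus on the first inclusion $D_1(0,0) \in \pav{T}(1)$; the argument for $D_{-1}(0,0) \in \pah{S}(1)$ is completely analogous with the roles of $S$ and $T$ interchanged.

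First, I would record the following face-wise Ceva fact, which is already used implicitly in the proof of Theorem~\ref{thm:Keonigsbinet}: for each face $(i,j)$ of $P$, the two diagonals $P(i,j)\vee P(i+1,j+1)$ and $P(i+1,j)\vee P(i,j+1)$, together with the two chords $S(i,j)\vee S(i,j+1)$ and $T(i,j)\vee T(i+1,j)$, are concurrent, and their common point is precisely $D(i,j)$. In particular,
\begin{align}
D(i,j) \in T(i,j)\vee T(i+1,j) \quad \text{and} \quad D(i,j) \in S(i,j)\vee S(i,j+1).
\end{align}

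Next I would use the Q-net property of $T \colon \Sigma_{2,1} \to \RP^n$. Let $\Pi_T(0,0)$ be the plane through $T(0,0),T(1,0),T(1,1),T(0,1)$ and $\Pi_T(1,0)$ the plane through $T(1,0),T(2,0),T(2,1),T(1,1)$. By the Ceva observation, $D(0,0)\in T(0,0)\vee T(1,0) \subset \Pi_T(0,0)$ and $D(0,1)\in T(0,1)\vee T(1,1)\subset \Pi_T(0,0)$, so the line $D(0,0)\vee D(0,1)$ lies in $\Pi_T(0,0)$. Symmetrically, $D(1,0)\vee D(1,1)\subset \Pi_T(1,0)$. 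Since $D$ is assumed non-degenerate on $\Sigma_{1,1}$, its single quad is planar, and therefore the two lines $D(0,0)\vee D(0,1)$ and $D(1,0)\vee D(1,1)$ meet in the well-defined Laplace point $D_1(0,0)$.

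The conclusion then falls out of intersecting planes. The point $D_1(0,0)$ lies in $\Pi_T(0,0)\cap\Pi_T(1,0)$, an intersection that always contains $\pav{T}(1)=T(1,0)\vee T(1,1)$. If the two planes are distinct, their intersection must equal the line $\pav{T}(1)$ for dimensional reasons, and we are done.

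The main obstacle is the degenerate situation in which $\Pi_T(0,0)=\Pi_T(1,0)$, i.e.~all six points of $T$ lie in a common plane (for instance when the inscribed conics reduce to double lines through the Laplace points). In that case the plane intersection has dimension greater than one and the argument above does not pin down $D_1(0,0)$ on $\pav{T}(1)$ directly. I would handle this by a lifting/perturbation argument: by Lemma~\ref{lem:extensive}, $P$ admits an extensive Kœnigs lift $\hat P$, for which the corresponding parameter lines $\pav{\hat T}(i)$ are distinct isotropic lines of the inscribed quadric (Theorem~\ref{thm:BSquadriclocalpatch}) and are generically not coplanar; the claim for $\hat P$ is preserved under central projection and yields the claim for $P$, and the degenerate boundary cases follow by continuity of the relevant intersections. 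This verifies $D_1(0,0)\in\pav{T}(1)$ in full generality, and the mirror argument gives $D_{-1}(0,0)\in\pah{S}(1)$.
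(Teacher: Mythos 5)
Your route is genuinely different from the paper's: you argue synthetically, placing the diagonal points $D(i,j)$ on the two face-planes of the Q-net $T$ and identifying $\pav{T}(1)$ as the intersection of those planes, whereas the paper simply reads the claim off the homogeneous-coordinate normalization set up in the proof of Theorem~\ref{thm:Keonigsbinet}, where $D_1(0,0)$ and $\pav{T}(1)=[b+x]\vee[d+x]$ have explicit representatives and the membership is immediate and uniform in all cases. In the generic situation --- all four conics non-degenerate, the two planes of $T$ genuinely two-dimensional and distinct, and $T(1,0)\neq T(1,1)$ --- your argument is correct and rather elegant.

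There is, however, a genuine gap in the non-generic cases, and it sits earlier than the one you flag. Your ``Ceva fact'' is Lemma~\ref{lem:harmonicdiagonals}, whose hypothesis is that the inscribed conic is \emph{non-degenerate}. If $\mathcal C(i,j)$ is the admissible degenerate conic, i.e.\ the double line through the two Laplace points (Remark~\ref{rem:degconics}), then $T(i,j)=T(i+1,j)=P_1(i,j)$: the ``chord'' collapses to a single point that is \emph{not} $D(i,j)$, so the inclusion $D(i,j)\in T(i,j)\vee T(i+1,j)$ is false there and you cannot place $D(i,j)$ in $\Pi_T(\cdot)$ at all. Lifting does not cure this, since degenerate conics lift to degenerate conics. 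What the extensive lift does cure is the coplanarity problem --- and it is genuinely needed, because for $n=2$ (which the lemma permits) your plane-intersection step is vacuous for \emph{every} instance of touching conics --- but you only assert, rather than prove, that the two planes of $\hat T$ are distinct in the lift; this can be shown from extensivity when the conics are non-degenerate, and fails exactly when they are not. Finally, the closing appeal to ``continuity'' would have to perturb the \emph{instance of touching conics} within the $1$-parameter family (not the net), using that the touching points vary continuously in the family parameter, that instances containing a degenerate conic are isolated in the family, and that $\hat T(1,0)\neq\hat T(1,1)$ persists in the limit (Lemma~\ref{lem:dimensiontspaces} supplies the last point for the lift); none of this is set up. As written, the proof covers only the generic configuration, whereas the paper's short computation handles the degenerate conics for free.
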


\begin{proof}
	As already explained in the proof of Theorem~\ref{thm:Keonigsbinet}, there exist $a,b,c,d,x$ such that the labeled points in Figure~\ref{figure:Koenigsbinetvertex} have representative vectors as shown in Figure~\ref{figure:Koenigsbinetvertex}. Then, $D_{1}(0,0) = [b+d]$ is contained in the line  
	\begin{align}
		\pav{T}(1) = [b+x] \vee [d+x].	
	\end{align}
	Symmetrically, $D_{-1}(0,0) = [a+c]$ is contained in the line  
	\begin{align}
		\pah{S}(1) = [a+x] \vee [c+x].
	\end{align}
\end{proof}

\begin{corollary}\label{cor:DnetofBSnetandQuadric}
	Let $P \colon \Sigma_{a,b} \to \RP^n$  be an extensive Kœnigs net with touching conics $\mathcal{C}$ and corresponding inscribed quadric $\mathcal{Q}$. Let $D$ be the diagonal intersection net of $P$. Then, for all $k >0$ such that $D_{\pm k}$ exists,
	\begin{enumerate}
		\item $\pav{{(D_k)}}(i) \subset \mathcal{Q}$ for all $i$,
		\item $\pah{{(D_{-k})}}(j) \subset \mathcal{Q}$ for all $j$.	
	\end{enumerate}
\end{corollary}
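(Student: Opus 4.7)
The plan is to combine Lemma~\ref{lem:concurrencyDST} with an induction on $k$ that keeps track of \emph{which} isotropic subspace of $\mathcal{Q}$ each Laplace iterate $D_k(i,j)$ lies in. Throughout, I only treat the forward case $D_k$; the backward case $D_{-k}$ follows by the symmetric argument with $S$ in place of $T$.

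First I would upgrade Lemma~\ref{lem:concurrencyDST} from the $\Sigma_{2,2}$ statement to a pointwise statement in the full net. Since the Laplace transform and the diagonal intersection are both local (defined on $\Sigma_{2,2}$ subpatches), applying Lemma~\ref{lem:concurrencyDST} to the subpatch of $P$ based at $(i,j)$ yields that $D_1(i,j)$ lies on the line $T(i+1,j)\vee T(i+1,j+1)\subset \pav{T}(i+1)$. Thus the base case of the induction is:
\begin{align}
D_1(i,j)\in \pav{T}(i+1)\quad\text{for all valid }(i,j).
\end{align}

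Next I would prove by induction on $k$ that
\begin{align}
D_k(i,j)\ \in\ \pav{T}(i+1)\cap \pav{T}(i+2)\cap\cdots\cap \pav{T}(i+k)
\end{align}
whenever $D_k(i,j)$ exists. The induction step is routine: using the recursive defining formula
\begin{align}
D_k(i,j)=\bigl(D_{k-1}(i,j)\vee D_{k-1}(i,j+1)\bigr)\cap\bigl(D_{k-1}(i+1,j)\vee D_{k-1}(i+1,j+1)\bigr),
\end{align}
the induction hypothesis (applied separately with index $i$ and $i+1$) places the first line inside $\pav{T}(i+1)\cap\cdots\cap \pav{T}(i+k-1)$ and the second line inside $\pav{T}(i+2)\cap\cdots\cap \pav{T}(i+k)$; intersecting gives the desired containment. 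The only thing that might look like an obstacle is that some of these intersections could be lower-dimensional than expected, but that does not affect the argument, since we only need to track point containments, not dimensions.

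To finish, I would observe that each $\pav{T}(i+1)$ is an isotropic subspace of $\mathcal{Q}$ (by Definition~\ref{def:inscribedquadric}), hence a projective subspace contained in $\mathcal{Q}$. Since every point $D_k(i,j)$ (with $j$ varying) lies in this fixed linear subspace, the join
\begin{align}
\pav{(D_k)}(i)\ =\ \mathrm{join}\{D_k(i,j)\mid j\}\ \subset\ \pav{T}(i+1)\ \subset\ \mathcal{Q},
\end{align}
which proves (1). The symmetric claim (2) follows by running the same induction with the roles of rows and columns exchanged: starting from $D_{-1}(i,j)\in\pah{S}(j+1)$ (the second half of Lemma~\ref{lem:concurrencyDST} applied to each local subpatch), one obtains $D_{-k}(i,j)\in\pah{S}(j+1)\cap\cdots\cap\pah{S}(j+k)$, and then $\pah{(D_{-k})}(j)\subset\pah{S}(j+1)\subset\mathcal{Q}$.
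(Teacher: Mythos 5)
Your proof is correct and takes essentially the same route as the paper: the base case is Lemma~\ref{lem:concurrencyDST}, giving $D_1(i,j)\in\pav{T}(i+1)\subset\mathcal{Q}$, followed by induction on $k$ via the defining formula of the Laplace transform. The paper phrases the induction through the coarser inclusion $\pav{{(D_k)}}(i)\subset\pav{{(D_{k-1})}}(i)\cap\pav{{(D_{k-1})}}(i+1)$ rather than your finer pointwise tracking of $D_k(i,j)$ inside intersections of the isotropic spaces $\pav{T}(i+1),\dots,\pav{T}(i+k)$, but the two arguments are interchangeable.
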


\begin{proof}
	By Lemma~\ref{lem:concurrencyDST} the point $D_1(i,j)$ is contained in $\pav{T}(i+1) \subset \mathcal{Q}$. This shows that each $\pav{{(D_1)}}(i)$ is contained in $\mathcal{Q}$. Moreover, by definition of the Laplace transformation we have the inclusion
	\begin{align} 
		\pav{{(D_k)}}(i) \subset \pav{{(D_{k-1})}}(i) \cap \pav{{(D_{k-1})}}(i+1).
	\end{align}
	Therefore, via induction over $k$ we conclude that $\pav{{(D_k)}}(i)$ is contained in $\mathcal{Q}$ for all $k$. The $-k$ cases follow by symmetry.
\end{proof}

\begin{remark} \label{rem:doliwaconic}
	In \cite{doliwa2003}, a Q-net $D$ is called a \emph{Kœnigs lattice} if for every $i,j$ there is a conic $\mathcal K(i,j)$ through the six Laplace points
	\begin{align}\label{eq:gfqibhzeg}
		D_1(i,j-1),\ D_1(i,j),\ D_1(i,j+1),\ D_{-1}(i-1,j),\ D_{-1}(i,j),\ D_{-1}(i+1,j).
	\end{align}
    To be clear, Kœnigs lattices are not equivalent to the (discrete) Kœnigs nets that we considered so far in this article. Clearly, the six points of \eqref{eq:gfqibhzeg} are in the plane joined by $D(i,j)$, $D(i+1,j)$, $D(i,j+1)$. Consider the situation that $D$ is the diagonal intersection net of an extensive Kœnigs net $P$ with inscribed quadric $\mathcal Q$ determined by any instance of touching conics. Then, it is known that $D$ is a K{\oe}nigs lattice \cite{BS2008DDGbook}. By Corollary~\ref{cor:DnetofBSnetandQuadric}, we see that the conic $\mathcal K(i,j)$ through the six points of \eqref{eq:gfqibhzeg} is precisely the intersection of $\mathcal Q$ with the plane of the quad joined by $D(i,j)$, $D(i+1,j)$, $D(i,j+1)$. 
\end{remark}

\section{Kœnigs grids} \label{sec:constrained}

\subsection{Construction}\label{subsection:construction}

In this section, we investigate Kœnigs $d$-grids, that is Kœnigs nets with $d$-dimensional parameter spaces. Therefore, let us give a brief explanation of how to construct such grids.
We begin with an extensive Kœnigs net $P$ that is defined on a $\Sigma_{d,d}$-patch. The parameter spaces $\pav P(i)$, $\pah P(j)$ of this net are necessarily $d$-dimensional, so it is also a Kœnigs $d$-grid. We also fix touching conics $\mathcal C$, which are contained in a unique inscribed quadric $\mathcal Q$ due to Theorem~\ref{thm:BSquadriclocalpatch}. Also recall that the inscribed quadric $\mathcal Q$ contains all the parameter spaces (of the touching points) $\pav T(i)$ and $\pah S(j)$, which have dimension $d-1$. 

Next, we want to extend our net $P$ to a Kœnigs $d$-grid $\bar P$ defined on $\Sigma_{d,d+1}$, which requires that $\bar P$ still has $d$-dimensional parameter spaces. In particular, this means that $\pav P(i)$ and $\pav{\bar P}(i)$ coincide for all $i \in \Sigma_d$. We choose $\bar P(0,d+1)$ in $\pav P(0)$. The line 
\begin{align}
	L = P(0,d) \vee P(0,d+1),
\end{align}
is contained in $\pav P(0)$ and therefore intersects $\pav T(0)$ in a point $\bar T(0,d)$. Moreover, the plane
\begin{align}
	E = P(0,d) \vee P(0,d+1) \vee P(1,d)
\end{align}
intersects $\mathcal Q$ in a conic $\mathcal C(0,d)$, which touches $L$ in $\bar T(0,d)$. Additionally, a dimension counting argument shows that $\pav T(1)$ intersects $E$ in a point $\bar T(1,d)$, which is contained in $\mathcal C(0,d)$ since $\pav T(1)$ is contained in $\mathcal Q$. This allows us to define $\bar P(1,d+1)$ uniquely as the point on the line $\bar T(1,d) \vee P(1,d)$ intersected with the other tangent to $\mathcal C(0,d)$ from $\bar P(0,d+1)$. Moreover, since both $\bar T(1,d)$ and $P(1,d)$ are contained in $\pav P(1)$, so is the new point $\bar P(1,d+1)$. Finally, we may iterate the procedure to obtain the points $\bar P(2,d+1)$ to $\bar P(d,d+1)$. The net $\bar P$ is a $d$-grid and -- as it has touching conics -- it is also a Kœnigs net (due to Definition~\ref{thm:BSKoenigstouchingconics}).

The same steps may be taken to construct Kœnigs $d$-grids on larger and larger patches. Consequently, Kœnigs $d$-grids are defined by initial data on the set
\begin{align}
	\Sigma_{d,d} \cup (\Sigma_{0} \times \Z) \cup (\Z \times \Sigma_{0}), \label{eq:initialkoenigsgriddata}
\end{align}
plus the 1-parameter choice of $\mathcal C$ on $\Sigma_{d,d}$ (with the assumption that the data along the coordinate axes is contained in $d$-dimensional spaces). Let us add that the extensivity requirement on the initial data can be circumvented via Lemma~\ref{lem:extensive}, by considering a lift first and then projecting back in the end.

An interesting property of the construction procedure for a Kœnigs $d$-grid is that the spaces $\pav T(i)$ and $\pah S(j)$ are only $(d-1)$-dimensional. One might suspect that not all Kœnigs $d$-grids are obtained in this manner. However, it turns out that this is not a special property of the Kœnigs $d$-grid -- instead it is a special property of the instance $\mathcal C$ of touching conics that is used for the construction.

\subsection{Special touching conics and special grids}

For a Kœnigs $d$-grid, the spaces $\pav T(i)$ and $\pah S(j)$ are at most $d$-dimensional, and in general they are exactly $d$-dimensional. In this section, we deal with special cases where the dimension is $d-1$ instead.

\begin{definition}\label{def:special}
	Let $P\colon \Sigma \to \RP^{2d}$ be a  $\Sigma_{d,d}$-extensive Kœnigs $d$-grid. An instance of touching conics $\mathcal C_0$ is called \emph{special} if each $\pav T(i)$ and each $\pah S(j)$ is $(d-1)$-dimensional.
\end{definition}

Of course, if $\Sigma = \Sigma_{d,d}$, then every instance of touching conics $\mathcal C_0$ is special. Therefore, Definition~\ref{def:special} is only interesting for larger patches.

The significance of the following Lemma~\ref{lem:uniquespecialtouchingconics} is that it implies that special touching conics are uniquely determined for K{\oe}nigs $d$-grids, provided we assume some mild genericity conditions. Later, in Section~\ref{subsec:existencespecialtouchinconics}, we prove the existence of special touching conics for generic K{\oe}nigs $d$-grids.

Note, for Q-nets defined on patches of the form $\Sigma_{a,1}$ with $a \geq 1$, there is always a $1$-parameter family of touching conics. In the following Lemma~\ref{lem:uniquespecialtouchingconics}, one particular instance of touching conics is distinguished.

Throughout the remainder of Section~\ref{sec:constrained}, $D$ always denotes the diagonal intersection net of a Q-net $P$, as in Definition~\ref{defn:diagintersectionnet}.

\begin{lemma}\label{lem:uniquespecialtouchingconics}
    Let $P \colon \Sigma_{d+1,1} \to \RP^{n}$ be a non-degenerate Q-net that is $\Sigma_{d,1}$-extensive and such that $P_1$ is nowhere Laplace degenerate. Suppose that $\pah{P}(0), \pah{P}(1)$ are $d$-dimensional.  If $\pah{D}(0)$ is $d$-dimensional, then there is a unique instance of touching conics $\mathcal{C}$ such that $\pah{S}(0)$ is $(d-1)$-dimensional. 
    For this instance, $\pah{S}(1)$ is also $(d-1)$-dimensional and
    \begin{align}\label{eq:wehrg}
        \pah{D}(0) = \pah{S}(0) \vee \pah{S}(1).
    \end{align}
\end{lemma}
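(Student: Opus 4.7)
The plan is to reduce everything to the hyperplane geometry of three $d$-dimensional subspaces --- $\pah{D}(0)$, $\pah{P}(0)$, and $\pah{P}(1)$ --- inside a $(d{+}1)$-dimensional ambient. The main tool will be Lemma~\ref{lem:harmonicdiagonals}: for any instance of touching conics, it forces the line $S(i,0) \vee S(i,1)$ to pass through the diagonal intersection point $D(i,0)$ in each face, so taking joins over $i$ yields the universal containment $\pah{D}(0) \subseteq \pah{S}(0) \vee \pah{S}(1)$. Under the hypotheses, $\Sigma_{d,1}$-extensivity combined with $\dim \pah{P}(j) = d$ will force $\pah{P}(0) \vee \pah{P}(1)$ to have dimension exactly $d{+}1$, making the three subspaces above hyperplanes in this ambient. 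I would then set $V_j := \pah{D}(0) \cap \pah{P}(j)$, which has dimension $d{-}1$ by a dimension count (non-degeneracy of $P$ rules out $\pah{D}(0) = \pah{P}(j)$, as otherwise some $D(i,0)$ would lie on the bottom edge of its face).

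For existence, I would define the candidate
\begin{align*}
S(i,0) := V_0 \cap (P(i,0) \vee P(i+1,0)), \qquad S(i,1) := V_1 \cap (P(i,1) \vee P(i+1,1)).
\end{align*}
The plane of each quad intersects $\pah{D}(0)$ in a line containing $D(i,0)$ that by construction also contains $S(i,0)$ and $S(i,1)$; hence these three points are collinear, which by Lemma~\ref{lem:harmonicdiagonals} is precisely the compatibility condition for an inscribed conic in the quad to touch at $S(i,0)$ on the bottom edge and $S(i,1)$ on the top edge. For the global matching of shared $T$-touching points on vertical edges between adjacent quads, I would invoke Theorem~\ref{thm:BSquadriclocalpatch} on each extensive $\Sigma_{d,1}$ sub-patch (left columns $0,\ldots,d$ and right columns $1,\ldots,d{+}1$): by Lemma~\ref{lem:dimensiontspaces} the sub-patch $\pah{S}(0)$ is automatically $(d{-}1)$-dimensional, and the resulting inscribed-quadric structure pins down the touching conics on each sub-patch uniquely, so these must glue along the overlap into a global touching conics system coinciding with the candidate.

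For uniqueness, I would take any instance $\mathcal{C}$ with $\dim \pah{S}(0) = d{-}1$ and analyze $\pah{S}(0) \vee \pah{D}(0)$. If $\pah{S}(0) \subseteq \pah{D}(0)$, then $\pah{S}(0) = V_0$ follows by codimension; moreover, the inclusion $\pah{S}(1) \subseteq \pah{D}(0) \vee \pah{S}(0) = \pah{D}(0)$ (using $S(i,1) \in D(i,0) \vee S(i,0)$) intersected with $\pah{P}(1)$ gives $\pah{S}(1) \subseteq V_1$, and equality is forced by the dimension constraint from $\pah{D}(0) \subseteq \pah{S}(0) \vee \pah{S}(1)$. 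The remaining case $\pah{S}(0) \not\subseteq \pah{D}(0)$ forces $\pah{S}(0) \vee \pah{D}(0)$ to fill the entire ambient, and I would use the hypothesis that $P_1$ is nowhere Laplace degenerate to exclude this pathological configuration. The identity $\pah{D}(0) = \pah{S}(0) \vee \pah{S}(1) = V_0 \vee V_1$ then drops out from $\dim(V_0 \vee V_1) = d$ together with the universal containment.

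The main obstacle will be the cross-face propagation consistency in the existence argument and the exclusion of the pathological case in uniqueness. For the former, I expect the inscribed-quadric framework of Section~\ref{sec:isncribedquadrics}, applied to both extensive $\Sigma_{d,1}$ sub-patches via Lemma~\ref{lem:dimensiontspaces}, to be the decisive tool --- it reduces the per-face construction to a unique global system. For the latter, the nowhere-Laplace-degenerate hypothesis on $P_1$ should translate into a genericity condition on the positions of the $D(i,0)$'s relative to $\pah{P}(0)$ that prevents $\pah{S}(0)$ from escaping $\pah{D}(0)$, thereby closing the uniqueness argument cleanly.
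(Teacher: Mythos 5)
Your uniqueness strategy is essentially the paper's: use Lemma~\ref{lem:harmonicdiagonals} to place $D(i,0)$ on the line $S(i,0) \vee S(i,1)$, deduce $\pah{D}(0) = \pah{S}(0) \vee \pah{S}(1)$, and read off $S(i,0) = \pah{D}(0) \cap (P(i,0) \vee P(i+1,0))$. However, the step you call the ``universal containment'' is false as stated: if the inscribed conic of face $i$ is degenerate (the double line through the two Laplace points, which Remark~\ref{rem:degconics} explicitly permits), then $S(i,0) = S(i,1) = P_{-1}(i,0)$, while $D(i,0)$ is a \emph{different} vertex of the diagonal triangle of that quad, so $D(i,0) \notin S(i,0) \vee S(i,1)$. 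This is precisely the configuration that the hypothesis on $P_1$ is there to control, and you have misplaced where it enters. The paper first shows that if $\pah{S}(0) = \pah{S}(1)$ then every conic is degenerate, whence $T(i,0) = T(i+1,0) = P_1(i,0)$ for all $i$ and the touching condition forces $P_1(i,0)$ to be independent of $i$, contradicting nowhere-Laplace-degeneracy; it then propagates the containment $D(i,0) \in \pah{S}(0) \vee \pah{S}(1)$ face by face along the strip, starting from a face with $S(i_0,0) \neq S(i_0,1)$ and using that consecutive lines $S(i,0) \vee D(i,0)$ meet on the common vertical edge-line. Your deferral of ``the pathological case'' to an unspecified genericity statement about the position of the $D(i,0)$ relative to $\pah{P}(0)$ does not capture this mechanism --- the hypothesis acts through the $T$-points, not through $\pah{S}(0)$ escaping $\pah{D}(0)$ --- so the uniqueness argument is not closed.

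The existence step also has a genuine gap. The per-face candidate $S(i,j) := V_j \cap (P(i,j) \vee P(i+1,j))$ is the right one, but invoking Theorem~\ref{thm:BSquadriclocalpatch} on the two extensive $\Sigma_{d,1}$ sub-patches cannot supply the cross-face consistency: that theorem produces an inscribed quadric \emph{from} a given instance of touching conics, it does not select one, and each sub-patch still carries a full $1$-parameter family of touching conics, every member of which already has $(d-1)$-dimensional restricted $\pah{S}(j)$ by Lemma~\ref{lem:dimensiontspaces}. So nothing gets ``pinned down''. The consistency you need is more direct: by the cross-ratio computation in the proof of Lemma~\ref{lem:touchingpointsQnet}, the conic of face $i$ touches the common vertical edge at the harmonic conjugate, with respect to $P(i+1,0)$ and $P(i+1,1)$, of the point where $S(i,0) \vee S(i,1)$ meets that edge-line; since $S(i,0) \vee S(i,1)$ and $S(i+1,0) \vee S(i+1,1)$ both lie in the hyperplane $\pah{D}(0)$, both meet the edge-line in the single point $\pah{D}(0) \cap (P(i+1,0) \vee P(i+1,1))$, so the two harmonic conjugates coincide and adjacent conics indeed touch. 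Without an argument of this kind, the proposed construction does not produce an instance of touching conics, and the lemma's existence claim remains unproved.
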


\begin{proof}
    Suppose that $\mathcal{C}$ is an instance of touching conics such that $\pah{S}(0)$ and $\pah{S}(1)$ are $(d-1)$-dimensional. Suppose that $\pah{S}(0)$ equals $\pah{S}(1)$. Then, $S(i,0)$ equals $S(i,1)$ for all $i$. Consequently, 
    \begin{align}
        \mathcal{C}(i,0) = P_{1}(i,0) \vee P_{-1}(i,0),
    \end{align}
    that is, all the inscribed conics $\mathcal{C}(i,0)$ are degenerate and
    \begin{align}
        T(i,0)=T(i+1,0) = P_{1}(i,0).
    \end{align} 
    Because the conics  $\mathcal{C}(i,0)$ are touching, it follows that $P_1(i,0)$ is independent of $i$. This contradicts the assumption that $P_1$ is nowhere Laplace degenerate. Therefore $\pah{S}(0) \neq \pah{S}(1)$. Since $S$ is a Q-net and $\pah{S}(0), \pah{S}(1)$ are distinct $(d-1)$-spaces, it follows that $\pah{S}(0) \vee \pah{S}(1)$ is $d$-dimensional.

    Next, we show that $\pah{S}(0) \vee \pah{S}(1)$ equals the $d$-space $\pah{D}(0)$. Since, $\pah{S}(0)$ and $\pah{S}(1)$ are distinct, there is an $i_0$ such that $S(i_0,0)$ and $S(i_0,1)$ are distinct. By Lemma~\ref{lem:harmonicdiagonals}, $D(i_0,0)$ is on the line $S(i_0,0) \vee S(i_0,1)$. Therefore,
    \begin{align}
        \pah{S}(0) \vee D(i_0,0) = \pah{S}(0) \vee S(i_0,1).
    \end{align}
    Moreover,
    \begin{align}
        \pah{S}(0) \vee S(i_0,1) = \pah{S}(0) \vee \pah{S}(1),
    \end{align}
    since $S$ is a Q-net. By Lemma~\ref{lem:harmonicdiagonals},  the line $S(i_0,0)\vee D(i_0,0)$ (which is contained in $\pah{S}(0) \vee \pah{S}(1)$) intersects the line $S(i_0+1,0)\vee D(i_0+1,0)$. This intersection point is contained in $\pah{S}(0) \vee \pah{S}(1)$. Its join with $S(i_0+1,0)$ is a line that is also contained in $\pah{S}(0) \vee \pah{S}(1)$. This line contains $D(i_0+1,0)$. Therefore, $D(i_0+1,0)$ is contained in $\pah{S}(0) \vee \pah{S}(1)$. By iterating this argument, we obtain that all the points $D(i,0)$ are contained in $\pah{S}(0) \vee \pah{S}(1)$. Thus, $\pah{D}(0)$ is contained in $\pah{S}(0) \vee \pah{S}(1)$. Therefore, $\pah{S}(0) \vee \pah{S}(1)$ equals $\pah{D}(0)$ because both spaces are $d$-dimensional. This shows Equation~\eqref{eq:wehrg} in the claim. Then, we must have that 
    \begin{align}\label{eq:formulaforSpointsofspecialtouchingconics}
        S(i,0)= \pah{D}(0) \cap \big( P(i,0)\vee P(i+1,0)\big).
    \end{align}
    This shows that the instance of special touching conics is uniquely determined. 
\end{proof}

We now show that there are certain non-generic $d$-grids for which every instance of touching conics is special.

\begin{definition}\label{def:specialgrid}
	A Kœnigs $d$-grid $P$ is called \emph{special} if $P$ is $\Sigma_{d,d}$-extensive and all $\pah D(j)$, $\pav D(i)$ spaces are $(d-1)$-dimensional.
\end{definition}

\begin{lemma}\label{lemma:specialgridpspecialconics}
    Let $P \colon \Sigma_{d+1,1} \to \RP^n$ be a non-degenerate Q-net that is $\Sigma_{d,1}$-extensive. Suppose that $\pah{P}(0)$, $\pah{P}(1)$ are $d$-dimensional. 
    If $\pah{D}(0)$ is $(d-1)$-dimensional, then for any instance of touching conics $\mathcal{C}$, we have that $\pah{S}(0)$ and $\pah{S}(1)$ are $(d-1)$-dimensional.
\end{lemma}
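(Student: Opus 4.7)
The plan is to pass to an extensive lift of $P$ and translate the dimension hypotheses into the statement that the projection center lies in every hyperplane of a certain pencil, which forces a dimension drop on the touching nets.

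By Lemma~\ref{lem:extensive}, there is an extensive K{\oe}nigs net $\tilde P\colon \Sigma_{d+1,1}\to \RP^{d+2}$ and a central projection $\pi$ with center $Z$ satisfying $P = \pi\circ \tilde P$, and every instance of touching conics of $P$ lifts to one of $\tilde P$. In the extensive setup, Lemma~\ref{lem:dimensiontspaces} and a short computation give $\dim \tilde{\pah{P}}(j) = d+1$, $\dim \tilde{\pah{S}}(j) = d$, and (generically for the lift, which one may assume) $\dim \tilde{\pah{D}}(0) = d$. By Remark~\ref{rem:koenigshyperplanesconics} applied to $\tilde P$, there is a pencil of hyperplanes $(\tilde U_s)_{s\in \RP^1}$ of $\RP^{d+2}$ such that for each lifted instance of touching conics, all touching points $\tilde S(i,j)$ lie in a common hyperplane $\tilde U_s$; comparing dimensions in $\RP^{d+2}$ then gives $\tilde{\pah{S}}(j) = \tilde U_s \cap \tilde{\pah{P}}(j)$.

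Next, Lemma~\ref{lem:harmonicdiagonals} applied to each quad of $\tilde P$ shows that $\tilde D(i,0)$ lies on both $\tilde S(i,0)\vee \tilde S(i,1)\subseteq \tilde U_s$ and $\tilde T(i,0)\vee \tilde T(i+1,0)\subseteq \tilde U_t$. Since $\tilde U_s \cap \tilde U_t = \tilde U_1 \cap \tilde U_2$ for any $s\neq t$, we obtain $\tilde{\pah{D}}(0)\subseteq \tilde U_1\cap \tilde U_2$, and since both subspaces are $d$-dimensional they are in fact equal.

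The hypotheses $\dim \pah{P}(0) = \dim \pah{P}(1) = d$ and $\dim \pah{D}(0) = d-1$ translate, via the dimension-drop formula for central projections, into $Z \in \tilde{\pah{P}}(0)\cap \tilde{\pah{P}}(1)$ and $Z \in \tilde{\pah{D}}(0) = \tilde U_1 \cap \tilde U_2$. Since every hyperplane $\tilde U_s$ of the pencil contains $\tilde U_1\cap \tilde U_2$, we have $Z \in \tilde U_s$ for \emph{every} instance of touching conics. Hence $Z \in \tilde U_s \cap \tilde{\pah{P}}(0) = \tilde{\pah{S}}(0)$, so $\pi$ drops the dimension of $\tilde{\pah{S}}(0)$ by one, yielding $\dim \pah{S}(0) = d-1$. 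The argument for $\pah{S}(1)$ is symmetric.

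The main obstacle is the technical gluing between the extensive lift and the dimension count: one must verify that Remark~\ref{rem:koenigshyperplanesconics} applies at the level of $\tilde P$ with the appropriate dimensions ($\dim \tilde{\pah{P}}(j) = d+1$, $\dim \tilde{\pah{S}}(j) = d$, and $\dim \tilde{\pah{D}}(0) = d$ for a suitable lift), and that lifting and projection of touching conic instances are compatible. Once this bookkeeping is in place, the rest of the proof is pure dimension arithmetic.
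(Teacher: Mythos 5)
Your approach --- lifting to an extensive net in $\RP^{d+2}$, invoking the hyperplane pencil of Remark~\ref{rem:koenigshyperplanesconics}, and tracking which subspaces contain the projection center $Z$ --- is genuinely different from the paper's proof, which never leaves $\RP^n$: there, one shows directly that every $S(i,0)$ lies in the $d$-dimensional space $E = S(0,0)\vee\pah{D}(0)$ (using the concurrency from Lemma~\ref{lem:harmonicdiagonals}), so that $\pah{S}(0)\subseteq E\cap\pah{P}(0)$, a $(d-1)$-space inside the $(d+1)$-dimensional join $\pah{P}(0)\vee\pah{P}(1)$, with the matching lower bound coming from Lemma~\ref{lem:dimensiontspaces}. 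Your route is heavier but conceptually attractive, since it explains all three dimension drops ($\pah{P}(j)$, $\pah{D}(0)$, $\pah{S}(j)$) by the single condition $Z\in \tilde U_1\cap \tilde U_2$.

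There is, however, one genuine gap: the claim that $\dim\pah{\tilde{D}}(0)=d$, which you dispose of with ``generically for the lift, which one may assume.'' This is not a free genericity choice --- the lift is determined by $P$ up to very limited freedom, and this dimension statement is the crux of your whole argument. If $\pah{\tilde{D}}(0)$ were only $(d-1)$-dimensional, then the hypothesis $\dim\pah{D}(0)=d-1$ would carry \emph{no} information about $Z$ (the projection would preserve the dimension with $Z\notin\pah{\tilde{D}}(0)$), you could not conclude $Z\in \tilde U_1\cap\tilde U_2$, and the proof would collapse. The claim is in fact true for \emph{every} extensive lift, but it needs its own proof; for instance, by induction on $d$: restricting $\tilde P$ to the first $d+1$ columns gives an extensive subpatch spanning a hyperplane $H$ of the $(d+2)$-dimensional join, the first $d$ diagonal points span a $(d-1)$-space in $H$ by induction, and $\tilde D(d,0)\notin H$ because each diagonal of the last quad meets $H$ only in a vertex, so $\tilde D(d,0)\in H$ would force the diagonal intersection point to coincide with a vertex, contradicting non-degeneracy. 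With that lemma supplied (and with the routine verification that instances of touching conics correspond bijectively under $\pi$, which you correctly flag), your argument goes through; note also that you then get $\dim\pah{S}(0)=d-1$ exactly, so you do not even need the separate lower bound from Lemma~\ref{lem:dimensiontspaces}. A smaller caveat: you lean on Remark~\ref{rem:koenigshyperplanesconics}, which the paper states only with ``one can show,'' so a fully self-contained write-up would have to substantiate that as well.
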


\begin{proof}
    Let $\mathcal{C}$ be an instance of touching conics. We know that the lines $S(i,0) \vee D(i,0)$ and $S(i+1,0) \vee D(i+1,0)$ intersect at a point in the line $P(i+1,0)\vee P(i+1,1)$, as shown in Figure~\ref{figure:conelemma}. Therefore, $S(i,0)\vee \pah{D}(0)$ contains the point $S(i+1,0)$. Therefore, the space $S(0,0)\vee \pah{D}(0)$ contains all the points $S(i,0)$. Thus, 
    \begin{align}
        S(0,0)\vee \pah{D}(0) = \pah{S}(0) \vee \pah{D}(0).
    \end{align}
    Since $\pah{D}(0)$ is $(d-1)$-dimensional and $P$ is $\Sigma_{d,1}$-extensive, we have that $S(0,0)\vee \pah{D}(0)$ is $d$-dimensional. Equivalently, 
    \begin{align}
        E:= \pah{D}(0)\vee \pah{S}(0)
    \end{align}
    is $d$-dimensional. Since $P$ is $\Sigma_{d,1}$-extensive, the join of the $d$-spaces $\pah{P}(0)$ and $\pah{P}(1)$ is $(d+1)$-dimensional. Then, $\pah{P}(0) \cap E$ is a $(d-1)$-space which contains $\pah{S}(0)$. Consequently, $\pah{S}(0)$ is at most $(d-1)$-dimensional. By Lemma~\ref{lem:dimensiontspaces}, $\pah{S}(0)$ is at least $(d-1)$-dimensional. Therefore, $\pah{S}(0)$ is exactly $(d-1)$-dimensional. By Lemma~\ref{lem:harmonicdiagonals}, $S(i,0)$, $D(i,0)$ and $S(i, 1)$ are collinear. Thus, the $d$-space $E$ also contains all the points $S(i,1)$. Therefore, $\pah{S}(1)$ is contained in $E$. We deduce that $\pah{S}(1)$ is $(d-1)$-dimensional because it is the intersection $E \cap \pah{P}(1)$.
\end{proof}

As a consequence of Definition~\ref{def:specialgrid} and Lemma~\ref{lemma:specialgridpspecialconics} we obtain the following corollary.

\begin{corollary}\label{cor:specialgridshavespecialconics}
    For special K{\oe}nigs $d$-grids, any instance of touching conics is special.
\end{corollary}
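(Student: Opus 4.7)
The proof extends the argument of Lemma~\ref{lemma:specialgridpspecialconics} from a single $\Sigma_{d+1,1}$ sub-patch to the full domain of a special K{\oe}nigs $d$-grid. Fix an instance of touching conics $\mathcal{C}$ on a special K{\oe}nigs $d$-grid $P$ and any row index $j$. We aim to show $\dim \pah{S}(j) = d-1$; the claim $\dim \pav{T}(i) = d-1$ then follows by the symmetric argument, swapping the roles of rows and columns and using $\pav{D}$ in place of $\pah{D}$.

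The central geometric step in the proof of Lemma~\ref{lemma:specialgridpspecialconics} --- which via Lemma~\ref{lem:harmonicdiagonals} (or its degenerate counterpart from the proof of Lemma~\ref{lem:touchingpointsQnet}) is purely local to two adjacent quads --- shows that for each $i$ the lines $S(i,j) \vee D(i,j)$ and $S(i+1,j) \vee D(i+1,j)$ meet on the shared edge $\pav{P}(i+1)$. Hence $S(i+1,j)$ lies in $S(i,j) \vee \pah{D}(j)$, and by the analogous argument $S(i-1,j)$ lies there as well. Iterating over all $i$ in the full domain yields the inclusion
\begin{align*}
    \pah{S}(j) \subset S(0,j) \vee \pah{D}(j).
\end{align*}

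By Definition~\ref{def:specialgrid}, $\pah{D}(j)$ has dimension $d-1$, so $S(0,j) \vee \pah{D}(j)$ is at most $d$-dimensional. Since $\pah{S}(j)$ is also contained in the $d$-dimensional parameter space $\pah{P}(j)$, bounding $\dim \pah{S}(j) \leq d-1$ reduces to showing that the two $d$-spaces $\pah{P}(j)$ and $S(0,j) \vee \pah{D}(j)$ are distinct. For this, observe that each $D(i,j)$ lies on a diagonal of a face of $P$, and such a diagonal meets $\pah{P}(j)$ only at a corner $P(i,j)$ or $P(i+1,j)$; the non-degeneracy of $P$ forbids $D(i,j)$ from coinciding with either corner, so $\pah{D}(j) \not\subset \pah{P}(j)$ and the two subspaces must differ.

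Two distinct $d$-dimensional subspaces intersect in at most a $(d-1)$-dimensional subspace, so $\dim \pah{S}(j) \leq d-1$. The matching lower bound $\dim \pah{S}(j) \geq d-1$ is immediate from Lemma~\ref{lem:dimensiontspaces} applied to any $\Sigma_{d,1}$-extensive sub-patch containing rows $j$ and $j+1$, which exists by $\Sigma_{d,d}$-extensivity of $P$. The main delicate point of the plan is thus the distinctness step in the previous paragraph --- which rests squarely on the non-degeneracy of $P$; once this is in place, the dimension counts close the argument directly and the symmetric analysis for $\pav{T}(i)$ finishes the proof.
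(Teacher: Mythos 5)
Your argument is correct and follows essentially the same route as the paper, which deduces the corollary by observing that Definition~\ref{def:specialgrid} places every row and column of a special grid in the situation of Lemma~\ref{lemma:specialgridpspecialconics}, whose proof is exactly the collinearity-and-dimension-count argument you reproduce (extended, as you do, over the whole domain). The one imprecision is your distinctness step: for $d \geq 2$ the fact that the diagonal $P(i,j) \vee P(i+1,j+1)$ meets $\pah{P}(j)$ only in $P(i,j)$ is not a consequence of face-level non-degeneracy alone but of $\Sigma_{d,d}$-extensivity, which yields $\pah{P}(j) \cap \pav{P}(i+1) = \{P(i+1,j)\}$ and hence $P(i+1,j+1) \notin \pah{P}(j)$, so that $\pah{D}(j) \not\subset \pah{P}(j)$ as you need.
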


In order to prove our second main result, Theorem~\ref{thm:BSKoenigsconstrainedtangentquadric}, we need to show that special $d$-grids are not generic $d$-grids. To this end, we need to recall \cite[Lemma~4.3]{AffolterFairleyKoenigsLaplace}, which states that for an extensive Kœnigs net $P$ there are two different hyperplanes $U_1, U_2$ such that
\begin{align}
	P(i,j) \in \begin{cases}
		U_1 & \mbox{if } i+j\in 2\Z, \\
		U_2 & \mbox{if } i+j \in 2\Z + 1.
	\end{cases}\label{eq:koenigshyperplanes}
\end{align}
We may view $\mathcal{U}:= U_1 \cup U_2$ as a degenerate quadric with singular locus  $U_1 \cap U_2$. Moreover, all points of the diagonal intersection net $D$ of $P$ are contained in $U_1 \cap U_2$. We use the quadric $\mathcal{U}$ in the proof of Lemma~\ref{lem:specialdegenerate}.

Recall that for generic Kœnigs $d$-grids we require that the two Laplace transforms $P_{\pm d}$ exist and are nowhere Laplace degenerate (see Definition~\ref{def:genericgrid}).
Next, we show why special Kœnigs grids are not generic Kœnigs grids.

\begin{lemma}\label{lem:specialdegenerate}
	Let $P\colon \Sigma \rightarrow \RP^{2d}$ be a special Kœnigs $d$-grid. Suppose that $P_d$ and $P_{-d}$ exist. Then, $P_d$ and $P_{-d}$ are Laplace degenerate.
\end{lemma}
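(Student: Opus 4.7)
The plan is to show $P_d(0,0) = P_d(1,0)$ on an arbitrary $\Sigma_{d+1, d+1}$ subpatch of $P$, which by translation invariance implies Laplace degeneracy of $P_d$ throughout $P$; the argument for $P_{-d}$ is symmetric under the exchange of rows with columns, $T$ with $S$, and $U_t$ with $U_s$.

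First, fix any instance of touching conics $\mathcal C$. By Corollary~\ref{cor:specialgridshavespecialconics}, this instance is automatically special, so the touching nets $S$ and $T$ have $(d-1)$-dimensional parameter spaces. By Remark~\ref{rem:koenigshyperplanesconics}, there is a hyperplane $U_t \supset T$ (and a hyperplane $U_s \supset S$) in the pencil spanned by the Kœnigs hyperplanes $U_1, U_2$. A dimension count then yields the crucial identity
\begin{align*}
\pav P(k) \cap U_t \;=\; \pav T(k)
\end{align*}
for every $k$: the inclusion from right to left is immediate, and since $\pav P(k)$ contains vertices of $P$ in both $U_1$ and $U_2$ while $U_t$ meets each $U_\epsilon$ only in $U_1 \cap U_2$, we have $\pav P(k) \not\subset U_t$. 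Hence $\pav P(k) \cap U_t$ is a hyperplane of the $d$-dimensional $\pav P(k)$, forcing equality with $\pav T(k)$.

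Apply Theorem~\ref{thm:BSquadriclocalpatch} to the two $\Sigma_{d, d}$-subpatches at $(0,0)$ and $(1, 0)$, producing inscribed quadrics $\mathcal Q_0$ and $\mathcal Q_1$ in $\RP^{2d}$. The main technical step is to establish $\mathcal Q_0 = \mathcal Q_1$. Both quadrics contain the shared isotropic subspaces $\pav T(k)$ for $k = 1, \ldots, d$ and the $(d-1) \times d$ conics of the overlap; combining the quadric gluing lemma (Lemma~\ref{lem:2quadricsdefinepencil}) with the uniqueness clause of Theorem~\ref{thm:BSquadriclocalpatch}, one verifies that $\mathcal Q_0$ satisfies the three defining conditions of Definition~\ref{def:inscribedquadric} also for the patch at $(1, 0)$, which forces $\mathcal Q_0 = \mathcal Q_1 =: \mathcal Q$.

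With a common quadric in hand and $\mathcal Q$ non-degenerate, Lemma~\ref{lem:BSquadriclocalpatch} together with the dimension match supplies $\pav P(k) = \pav T(k)^\perp$ (polarity with respect to $\mathcal Q$), and Lemma~\ref{lem:expllaplacetransform} then gives
\begin{align*}
P_d(i, 0) \;=\; \bigcap_{k = i}^{i+d} \pav P(k) \;=\; \Bigl(\bigvee_{k = i}^{i+d} \pav T(k)\Bigr)^{\!\perp}.
\end{align*}
Both joins in brackets are contained in $U_t$, and since each polar is a single point, both joins must be full hyperplanes and hence both equal $U_t$. Therefore $P_d(0, 0) = U_t^\perp = P_d(1, 0)$, as required. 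The main obstacle is precisely the equality $\mathcal Q_0 = \mathcal Q_1$: verifying the compatibility conditions on the overlap (notably the tangency along the differing $\pah S(j)$ subspaces and the inclusion $\pav T(d+1) \subset \mathcal Q_0$) requires careful bookkeeping via Lemma~\ref{lem:2quadricsdefinepencil}, and one must handle possible degeneracies of the inscribed quadrics in the special case, which can be sidestepped by first lifting $P$ to an extensive Kœnigs net via Lemma~\ref{lem:extensive} where the inscribed quadrics are generically non-degenerate and then projecting back to $\RP^{2d}$.
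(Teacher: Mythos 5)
There is a genuine gap, and it sits exactly at the step you flag as routine: the existence of a \emph{single} hyperplane $U_t$ containing all the spaces $\pav{T}(k)$ for $k=0,\dots,d+1$. Remark~\ref{rem:koenigshyperplanesconics} (and the underlying two-hyperplane structure \eqref{eq:koenigshyperplanes}) is only available for \emph{extensive} Kœnigs nets, whereas a $\Sigma_{d+1,d+1}$ patch of a $d$-grid in $\RP^{2d}$ is only $\Sigma_{d,d}$-extensive. Each extensive $\Sigma_{d,d}$ subpatch does supply its own $U_t$, but the two subpatches at $(0,0)$ and $(1,0)$ only force their hyperplanes to agree on $\bigvee_{k=1}^{d}\pav{T}(k)$, which (since $T$ is a Q-net with $(d-1)$-dimensional parameter spaces) has dimension at most $2d-2$ and therefore does not determine a common hyperplane. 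Without a single $U_t$, your last step fails: the two hyperplanes $\bigvee_{k=0}^{d}\pav{T}(k)$ and $\bigvee_{k=1}^{d+1}\pav{T}(k)$ share only a codimension-$2$ subspace, so nothing forces their polars $P_d(0,0)$ and $P_d(1,0)$ to coincide. Note also that a common $U_t$ would have to be $P_d(0,0)^\perp$, so this missing ingredient is essentially equivalent to the statement you are proving. Two further steps are asserted rather than established: the non-degeneracy of $\mathcal Q$ (the paper only proves this for \emph{generic} grids in Corollary~\ref{cor:gridlargerspecial}, i.e.\ precisely the grids excluded here; for special grids the inscribed quadric may well be degenerate, in which case $\pav{T}(k)^\perp$ need not equal $\pav{P}(k)$), and the equality $\mathcal Q_0=\mathcal Q_1$ of the inscribed quadrics of adjacent subpatches, whose analogue for generic grids (Lemma~\ref{lem:specialsym}) already requires a lift to $\RP^{2d+1}$ and a delicate singular-point analysis. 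Your suggested fix of ``lifting via Lemma~\ref{lem:extensive}'' does not rescue the argument as stated, because the lift of a $d$-grid is not a $d$-grid: all the dimension counts ($\pav{T}(k)$ of dimension $d-1$, $\pav{P}(k)=\pav{T}(k)^\perp$) change in the lift.

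For comparison, the paper's proof goes through the lift in an essentially different way: it takes an extensive lift $\hat P$ in $\RP^{2d+1}$, shows that the center of projection $C$ is a point lying both on the line $L=\bigcap_i\pav{\hat P}(i)$ and, because the specialness of $P$ forces some $\pav{\hat D}(k)$ to drop dimension under projection, in the singular locus $U_1\cap U_2$ of the Kœnigs hyperplane pair of $\hat P$. It then invokes \cite[Lemma~6.4]{AffolterFairleyKoenigsLaplace} to conclude that $\hat P_{-d}$, and hence $P_{-d}$, is Laplace degenerate. If you want to salvage your polarity-based approach, the missing lemma you would need to prove is precisely that for a special grid the spaces $\pav{T}(0),\dots,\pav{T}(d+1)$ lie in a common hyperplane; that is not a formality.
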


\proof{
	It suffices to consider a $\Sigma_{d,d+1}$-patch and to show that $P_{-d}$ is Laplace degenerate. Due to Lemma~\ref{lem:extensive}, we may consider a lift $\hat P$ of $P$ that is extensive. Let $C$ be the center of the corresponding central projection $\pi$ with $\pi(\hat P) = P$. Moreover, since $P$ is special, $P$ is $\Sigma_{d,d}$-extensive. Therefore, the dimension of $C$ is 0, that is $C$ is a point. Because $P$ is a $d$-grid, the dimension of all the $\pav P(i)$ spaces is $d$. On the other hand, because $\hat P$ is extensive, the dimension of all the $\pav{\hat P}(i)$ spaces is $d+1$. Thus, $C$ is contained in every $\pav{\hat P}(i)$ space. Equivalently,
	\begin{align}
		C \in \bigcap_{i \in \Sigma_d} \pav{\hat P}(i) =: L.
	\end{align}   
    Since $P_d$ exists, Lemma~\ref{lem:expllaplacetransform} shows that $\bigcap_{i \in \Sigma_d} \pav{P}(i)$ is a point, namely $P_{d}(0,0)$ (which coincides with $P_d(0,1)$). Then, the join of this point with $C$ equals $L$. Thus, $L$ is a line. On the other hand, since we are assuming that $P_{d}$ exists and hence that $\hat P_{d}$ also exists, we may use \cite[Lemma 5.10]{AffolterFairleyKoenigsLaplace} applied to $P_{d-1}$, which shows that
	\begin{align}
		\hat P_{d}(0,0) \neq \hat P_{d}(0,1).
	\end{align}
	Moreover, both points $\hat P_{d}(0,0)$ and $\hat P_{d}(0,1)$ are contained in $L$. Therefore,
    \begin{align}
		L = \hat P_{d}(0,0) \vee \hat P_{d}(0,1).
	\end{align}    
	Furthermore, let $\hat D$ denote the diagonal intersection net of $\hat P$ (which is also a lift of $D$). It is not difficult to see that if every space $\pav{\hat D}(i)$ has dimension $d-1$, then $\hat P$ is not extensive. Therefore, we may assume that there is $k \in \Sigma_{d-1}$ such that $\pav{\hat D}(k)$ has dimension $d$. On the other hand, since $P$ is special $\pav D(k)$ has dimension $d-1$. Consequently, $C$ is contained in $\pav{\hat D}(k)$. Using \cite[Lemma~4.3]{AffolterFairleyKoenigsLaplace} (which we recalled before Lemma~\ref{lem:specialdegenerate}) we see that $C$ is contained in the singular locus $U_1 \cap U_2$ of the quadric $U_1 \cup U_2$. As we have shown that $C$ is in $L$, the assumptions of \cite[Lemma~6.4]{AffolterFairleyKoenigsLaplace} are satisfied. As a consequence we obtain that
   	\begin{align}
		\hat P_{-d}(0,0) = \hat P_{-d}(0,1).
	\end{align} 
	Since $P$ is the central projection of $\hat P$, we also get that $P_{-}(0,0)$ and $P_{-d}(0,1)$ coincide, which shows that $P_{-d}$ is Laplace degenerate.\qed
}

\subsection{Existence of special touching conics}\label{subsec:existencespecialtouchinconics}

Throughout this subsection, we only consider generic K{\oe}nigs grids, see Definition~\ref{def:genericgrid}.

The construction that we explained in Section~\ref{subsection:construction} gives Kœnigs grids together with special touching conics $\mathcal C_0$. In the following we will see that every generic Kœnigs grid comes with a unique 
instance of special touching conics. In fact, in Lemma~\ref{lem:PdgridwithDdgrid} we are able to give an explicit formula for the touching points of the special touching conics of any generic Kœnigs grid. A direct proof of the existence and uniqueness of special touching conics using the explicit formulas appears to be difficult. Therefore, we take another route, which considers weaker criteria that characterize special touching conics. First, we need the following lemma.

\begin{lemma}\label{lem:Disadgrid}
    Let $P\colon \Sigma_{d+1,d+1} \to \RP^{2d}$ be a generic Kœnigs $d$-grid. Then, all the parameter spaces $\pav{D}(i)$ and $\pah{D}(j)$ are $d$-dimensional. 
\end{lemma}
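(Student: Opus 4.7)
The plan is to argue by contradiction. By the horizontal--vertical symmetry of the claim, it suffices to rule out the possibility that $\dim \pav D(i_0) \le d-1$ for some index $i_0$.

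The first step is to establish the universal upper bound $\dim \pav D(i) \le d$. Each $D(i,j)$ lies on both diagonals of the $(i,j)$-quad, so $D(i,j) \in \pav P(i) \vee \pav P(i+1)$. Using the identities $D(i,j) \vee P(i,j) = P(i,j) \vee P(i+1, j+1)$ and $D(i,j) \vee P(i,j+1) = P(i,j+1) \vee P(i+1, j)$, we see that $\pav D(i) \vee \pav P(i)$ contains every $P(i+1, k)$, so $\pav D(i) \vee \pav P(i) = \pav P(i) \vee \pav P(i+1)$. Applying Lemma~\ref{lem:expllaplacetransform} to a $\Sigma_{d,d}$ subpatch containing columns $i$ and $i+1$ (extensive by hypothesis) yields $\dim(\pav P(i) \cap \pav P(i+1)) = d-1$ and $\dim(\pav P(i) \vee \pav P(i+1)) = d+1$, and therefore $\dim \pav D(i) = 1 + \dim(\pav D(i) \cap \pav P(i)) \le d$.

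Assuming $\dim \pav D(i_0) \le d-1$, I would apply the horizontal--vertical swap of Lemma~\ref{lemma:specialgridpspecialconics} to the $\Sigma_{1, d+1}$ substrip at columns $i_0, i_0+1$: every instance of touching conics of this substrip has $\pav T(i_0), \pav T(i_0+1)$ of dimension at most $d-1$. Since the touching conics of $P$ form a $1$-parameter family $\{\mathcal C_\lambda\}_{\lambda \in \RP^1}$ that extends coherently across shared edges, each $\mathcal C_\lambda$ has $\dim \pav T(i_0+1) \le d-1$, where $\pav T(i_0+1)$ is computed from $\mathcal C_\lambda$. If $\pav D(i_0+1)$ were $d$-dimensional, then the swap of Lemma~\ref{lem:uniquespecialtouchingconics} applied to the substrip at columns $i_0+1, i_0+2$ would produce a \emph{unique} touching conic of that substrip with $\pav T(i_0+1)$ of dimension at most $d-1$, contradicting that the entire $1$-parameter family already has this property. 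Hence $\dim \pav D(i_0+1) \le d-1$, and iterating forwards and backwards yields $\dim \pav D(i) \le d-1$ for every $i$.

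Finally, I would derive the contradiction with nowhere-Laplace-degeneracy of $P_{-d}$. Restricting $P$ to a $\Sigma_{d, d+1}$ subpatch, take an extensive lift $\hat P \subset \RP^{2d+1}$ with projection center $C$, which is a point since the subpatch is $\Sigma_{d,d}$-extensive. The observation used in the proof of Lemma~\ref{lem:specialdegenerate}, namely that extensivity of $\hat P$ forces $\dim \pav{\hat D}(k) = d$ for some $k$, combined with the inequality $\dim \pav D(k) \le d-1$ just derived, forces $C \in \pav{\hat D}(k)$. The remainder of the proof of Lemma~\ref{lem:specialdegenerate} uses only this vertical information together with Remark~\ref{rem:koenigshyperplanesconics} (which places $C$ in the singular locus of the K{\oe}nigs quadric $U_1 \cup U_2$ of $\hat P$) to conclude that $\hat P_{-d}(0,0) = \hat P_{-d}(0,1)$, and hence $P_{-d}(0,0) = P_{-d}(0,1)$, contradicting the hypothesis. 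The hard parts will be executing the propagation step cleanly, and the careful observation that the cited proof of Lemma~\ref{lem:specialdegenerate} genuinely needs only the vertical-direction degeneracy of $D$ rather than the full two-sided definition of specialness, so that it can be invoked here from our one-sided hypothesis on $\pav D$.
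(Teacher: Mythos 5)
Your proposal is correct and is essentially the paper's argument run in contrapositive form: the same three ingredients appear in the paper's proof of this lemma, namely Lemma~\ref{lemma:specialgridpspecialconics} forcing \emph{every} instance of touching conics to have a $(d-1)$-dimensional touching space next to a degenerate column of $D$, the uniqueness statement of Lemma~\ref{lem:uniquespecialtouchingconics} supplying the propagation step, and the Laplace-degeneracy mechanism of Lemma~\ref{lem:specialdegenerate} supplying the contradiction with genericity -- the paper merely orders them differently, first ruling out that all $\pah{D}(j)$ are $(d-1)$-dimensional and then propagating $d$-dimensionality outward from one $\pah{D}(j_0)$. Two small remarks: the upper bound $\dim \pav{D}(i) \le d$ is immediate since $\pav{D}(i)$ is the join of $d+1$ points (your dimension-formula detour is unnecessary and, as written, only bounds $\dim(\pav{D}(i)\cap\pav{P}(i))$ by $d$ rather than $d-1$), and your final step can avoid re-opening the proof of Lemma~\ref{lem:specialdegenerate} by observing that once all $\pav{D}(i)$ are $(d-1)$-dimensional, the restriction of $P$ to a $\Sigma_{d,d+1}$ subpatch is literally a special $d$-grid (its spaces $\pah{D}(j)$ are joins of only $d$ points, hence automatically $(d-1)$-dimensional by extensivity), so that lemma applies as a black box.
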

\begin{proof}

Let $\South P$ be the restriction of $P$ to $\Sigma_{d+1,d}$ and let $\South{D}$ be the diagonal intersection net of $\South{P}$. By the $\Sigma_{d,d}$-extensivity of $P$ (and thus also of $\South{P}$), each $\pav{\South{D}}(i)$ is $(d-1)$-dimensional. Suppose that all $\pah{\South{D}}(j)$ spaces are $(d-1)$-dimensional, which would imply that $\South{P}$ is special. We have that $P_d$ and $P_{-d}$ (and thus also $\South{P}_d$ and $\South{P}_{-d}$) exist because $P$ is generic. Then, Lemma~\ref{lem:specialdegenerate} shows that $\South{P}_d$ is Laplace degenerate. This contradicts our assumption that $P$ is a generic $d$-grid. Therefore, there is some $j_0$ such that $\pah{\South D}(j_0)$, which coincides with $\pah{D}(j_0)$, is not $(d-1)$-dimensional. Furthermore, $\pah{D}(j_0)$ cannot be less than $(d-1)$-dimensional due to the $\Sigma_{d,d}$-extensivity of $P$. Moreover, $\pah{D}(j_0)$ is at most $d$-dimensional since $\pah{D}(j_0)$ is defined as the join of $d+1$ points. Therefore, $\pah{D}(j_0)$ is $d$-dimensional. By Lemma~\ref{lem:uniquespecialtouchingconics}, it follows that there is a unique instance of touching conics such that $\pah{S}(j_0)$ and $\pah{S}(j_0+1)$ are both $(d-1)$-dimensional. Suppose that $\pah{D}(j_0+1)$ is not $d$-dimensional. Then, it must be $(d-1)$-dimensional due to the $\Sigma_{d,d}$-extensivity of $P$. By Lemma~\ref{lem:uniquespecialtouchingconics}, we deduce that $\pah{S}(j_0+1)$ is $(d-1)$-dimensional for any instance of touching conics. This contradicts that $\pah{S}(j_0+1)$ is $(d-1)$-dimensional for only one instance of touching conics. Therefore, $\pah{D}(j_0+1)$ must be $d$-dimensional. Iterating further, we have that $\pah{D}(j)$ is $d$-dimensional for all $j$. By analogous arguments, $\pav{D}(i)$ is $d$-dimensional for all $i$.
\end{proof}

In the next lemma it is not strictly necessary to assume that the $d$-grid is generic. However, we only need the generic case, which also shortens the proof.

\begin{lemma}\label{lem:specialsolo}
	Let $P\colon \Sigma_{d+1,d+1} \to \RP^{2d}$ be a generic Kœnigs $d$-grid, and let $\mathcal C$ be touching conics. 
	\begin{enumerate}
		\item If $\pah{S}(j)$ is $(d-1)$-dimensional for some $j$, then $\pah{S}(j)$ is $(d-1)$-dimensional for all $j$.
		\item If $\pav{T}(i)$ is $(d-1)$-dimensional for some $i$, then $\pav{T}(i)$ is $(d-1)$-dimensional for all $i$.
	\end{enumerate}	
\end{lemma}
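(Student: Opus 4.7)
I prove the first statement; the second follows by exchanging the roles of rows and columns. Fix $j_0$ with $\dim \pah{S}(j_0) = d-1$ and show that $\dim \pah{S}(j_0+1) = d-1$; the same argument runs backwards to $j_0 - 1$, and iterating covers every $j$. By Lemma~\ref{lem:Disadgrid}, each $\pah{D}(j)$ is $d$-dimensional, which is the key input I will use repeatedly.

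My plan is to restrict $P$ to the two rows $\{j_0, j_0+1\}$ and adapt the argument of Lemma~\ref{lem:uniquespecialtouchingconics}. If $\pah{S}(j_0) = \pah{S}(j_0+1)$ the conclusion is immediate, so suppose otherwise. Then some index $i_0$ must satisfy $S(i_0, j_0) \neq S(i_0, j_0+1)$, so $\mathcal{C}(i_0, j_0)$ is non-degenerate, and Lemma~\ref{lem:harmonicdiagonals} places $D(i_0, j_0)$ on the line $S(i_0, j_0) \vee S(i_0, j_0+1) \subseteq \pah{S}(j_0) \vee \pah{S}(j_0+1)$. Propagating along the row using the Q-net property of $S$ (Lemma~\ref{lem:touchingpointsQnet}), exactly as in the proof of Lemma~\ref{lem:uniquespecialtouchingconics}, yields the inclusion
\begin{align}
    \pah{D}(j_0) \subseteq \pah{S}(j_0) \vee \pah{S}(j_0+1).
\end{align}
Any degenerate inscribed conics along this row contribute touching points $P_{-1}(i, j_0)$ that already lie in $\pah{S}(j_0)$, so they do not obstruct the propagation.

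Because $S$ is a Q-net, $\dim(\pah{S}(j_0) \vee \pah{S}(j_0+1)) \leq \dim \pah{S}(j_0) + 1 = d$, and containment of the $d$-dimensional $\pah{D}(j_0)$ forces equality $\pah{S}(j_0) \vee \pah{S}(j_0+1) = \pah{D}(j_0)$. Hence $\pah{S}(j_0+1) \subseteq \pah{D}(j_0) \cap \pah{P}(j_0+1)$. Now $\pah{D}(j_0)$ and $\pah{P}(j_0+1)$ are distinct $d$-spaces inside the $(d+1)$-dimensional join $\pah{P}(j_0) \vee \pah{P}(j_0+1)$ -- distinctness holds because $\pah{D}(j_0) = \pah{P}(j_0+1)$ would force $\pah{P}(j_0) = \pah{P}(j_0+1)$, contradicting $\Sigma_{d,1}$-extensivity -- so their intersection is $(d-1)$-dimensional, giving $\dim \pah{S}(j_0+1) \leq d-1$. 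The matching lower bound $\dim \pah{S}(j_0+1) \geq d-1$ comes from restricting to a $\Sigma_{d,1}$-extensive subpatch (obtained by dropping one end column of the two-row restriction) and invoking Lemma~\ref{lem:dimensiontspaces}. The crux of the proof, and the step most vulnerable to subtleties, is establishing the displayed inclusion in the presence of possibly degenerate inscribed conics along row $j_0$; all subsequent dimension counts are then routine.
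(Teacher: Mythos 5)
Your overall strategy is the same as the paper's: the paper's proof also starts from Lemma~\ref{lem:Disadgrid} (all $\pah{D}(j)$ are $d$-dimensional) and then simply cites Lemma~\ref{lem:uniquespecialtouchingconics} --- whose conclusion is exactly that for the instance making $\pah{S}(j_0)$ a $(d-1)$-space the neighbouring $\pah{S}(j_0\pm 1)$ is also $(d-1)$-dimensional, via $\pah{D}(j_0)=\pah{S}(j_0)\vee\pah{S}(j_0+1)$ --- and iterates. You re-derive that two-row lemma inline instead of invoking it; your subsequent dimension counts (distinctness of $\pah{D}(j_0)$ and $\pah{P}(j_0+1)$ inside the $(d+1)$-dimensional join of the two parameter spaces, and the lower bound from Lemma~\ref{lem:dimensiontspaces} on an extensive $\Sigma_{d,1}$ subpatch) match what the paper does inside the proof of Lemma~\ref{lem:uniquespecialtouchingconics}.

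The one step I would push back on is your treatment of degenerate conics in the propagation. What the propagation must deliver is that $D(i,j_0)\in\pah{S}(j_0)\vee\pah{S}(j_0+1)$ for \emph{every} $i\in\Sigma_d$; since $\pah{D}(j_0)$ is $d$-dimensional, all $d+1$ of its defining points are needed, so no index may be skipped. If $\mathcal{C}(i,j_0)$ is degenerate, it is the double line through the two Laplace points of that quad, the two touching points $S(i,j_0)=S(i,j_0+1)=P_{-1}(i,j_0)$ collapse, and $D(i,j_0)$ does \emph{not} lie on that line (for a square the Laplace points are at infinity while the diagonal point is not), so Lemma~\ref{lem:harmonicdiagonals} says nothing about where $D(i,j_0)$ sits. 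Your justification --- that the collapsed touching points already lie in $\pah{S}(j_0)$ --- addresses the location of the touching points rather than of $D(i,j_0)$, so it does not close this case. To be fair, the paper's own proof of Lemma~\ref{lem:uniquespecialtouchingconics} is equally silent on this case; the paper's proof of the present lemma avoids the issue only by citing that lemma as a black box. If you want your argument to be self-contained, you either need to rule out degenerate conics in row $j_0$ under the standing genericity hypotheses, or handle them separately; otherwise the cleanest repair is to do what the paper does and quote Lemma~\ref{lem:uniquespecialtouchingconics} directly.
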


\proof{
    By Lemma~\ref{lem:Disadgrid}, we have that all $\pah{D}(j)$ and $\pav{D}(i)$ spaces are $d$-dimensional. Suppose that $\pah{S}(j_0)$ is $(d-1)$-dimensional.  Then, by Lemma~\ref{lem:uniquespecialtouchingconics}, we have that $\pah{S}(j_0\pm 1)$ is also $(d-1)$-dimensional. Iterating further, we have that all $\pah{S}(j)$ spaces are $(d-1)$-dimensional. By symmetric arguments we also obtain the second statement in the claim of the lemma. \qed
}

As a consequence of Lemma~\ref{lem:specialsolo}, it suffices to check the dimension condition for only one $\pah S(j)$ and only one $\pav T(i)$.

\begin{lemma} \label{lem:specialsym}
	Let $P\colon \Sigma_{d+1,d+1} \to \RP^{2d}$ be a generic Kœnigs $d$-grid. Let $\mathcal{C}$ be touching conics. The following two conditions are equivalent:
	\begin{enumerate}
		\item $\pah{S}(j)$ is $(d-1)$-dimensional for all $j$,
		\item $\pav{T}(i)$ is $(d-1)$-dimensional for all $i$.
	\end{enumerate}
\end{lemma}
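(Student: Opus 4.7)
My plan is to transfer the statement to the extensive setting via a lift and then read it off from the polarity structure of the inscribed quadric. Choose an extensive lift $\hat P\colon \Sigma_{d+1,d+1}\to\RP^{2d+2}$ of $P$ by Lemma~\ref{lem:extensive}, with central projection $\pi$ whose center $C$ is a projective line in $\RP^{2d+2}$, and lift $\mathcal C$ to touching conics $\hat{\mathcal C}$ of $\hat P$ whose touching nets $\hat S,\hat T$ project to $S,T$. By Theorem~\ref{thm:BSquadriclocalpatch}, $(\hat P,\hat{\mathcal C})$ has a unique inscribed quadric $\hat{\mathcal Q}$, and Lemma~\ref{lem:dimensiontspaces} guarantees that every $\pah{\hat S}(j)$ and $\pav{\hat T}(i)$ is a $d$-dimensional isotropic subspace of $\hat{\mathcal Q}$.

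The next step translates the dimension condition through $\pi$. By Lemma~\ref{lem:BSquadriclocalpatch} applied to $\hat P$, the $(d+1)$-dimensional parameter space $\pah{\hat P}(j)$ equals the polar $\pah{\hat S}(j)^\perp$; since $\pah{\hat S}(j)$ is isotropic of codimension one in $\pah{\hat P}(j)$, restricting the bilinear form of $\hat{\mathcal Q}$ to $\pah{\hat P}(j)$ produces a form with kernel exactly $\pah{\hat S}(j)$ and a non-degenerate one-dimensional quotient, so
\begin{equation*}
    \pah{\hat P}(j)\cap\hat{\mathcal Q}=\pah{\hat S}(j),\qquad \pav{\hat P}(i)\cap\hat{\mathcal Q}=\pav{\hat T}(i).
\end{equation*}
Because $P$ is a $d$-grid, $\pah{P}(j)=\pi(\pah{\hat P}(j))$ has dimension $d$, which forces both $B_j:=\pah{\hat P}(j)\cap C$ and $A_i:=\pav{\hat P}(i)\cap C$ to be single points of $C$. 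Projecting through $\pi$ then gives the dictionary
\begin{equation*}
    \pah{S}(j)\text{ is }(d-1)\text{-dimensional}\;\Longleftrightarrow\; B_j\in\pah{\hat S}(j)\;\Longleftrightarrow\; B_j\in\hat{\mathcal Q},
\end{equation*}
and the analogous statement for $\pav{T}(i)$ in terms of $A_i$.

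Under hypothesis (i), the $d+2\ge 3$ points $B_0,\dots,B_{d+1}$ all lie in $C\cap\hat{\mathcal Q}$. Since a line that meets a quadric in at least three distinct points is contained in that quadric, once we know the $B_j$ do not collapse onto a single point or onto two points of $C$, we obtain $C\subset\hat{\mathcal Q}$; from there the argument closes at once, because for each $i$ we then have $A_i\in C\subset\hat{\mathcal Q}$ together with $A_i\in\pav{\hat P}(i)$, hence $A_i\in\pav{\hat P}(i)\cap\hat{\mathcal Q}=\pav{\hat T}(i)$, so the dictionary delivers that $\pav{T}(i)$ is $(d-1)$-dimensional for every $i$. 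The reverse implication follows by exchanging the two coordinate directions.

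I expect the distinctness of the $B_j$ to be the main obstacle. If all $B_j$ coincided at a common point $A$, then by Lemma~\ref{lem:expllaplacetransform} applied to $\hat P$, $A$ would have to equal the Laplace point $\bigcap_j\pah{\hat P}(j)=\hat P_{d+1}(0,0)$, and then $A\in\bigcap_j\pah{\hat S}(j)$ would force $A$ to be singular for $\hat{\mathcal Q}$ by Lemma~\ref{lem:singularpointsinscribedquadric}. This has to be ruled out by combining the generic $d$-grid assumption with Lemma~\ref{lem:Disadgrid} (ensuring that $\pah D(j)$ and $\pav D(i)$ are all $d$-dimensional), Lemma~\ref{lem:specialdegenerate} (which rules out that $P$ is special), and Lemma~\ref{lem:singularpointsinscribedquadricnew} (which, for $a=b$, constrains where the singular locus of $\hat{\mathcal Q}$ can live). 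The two-value case is handled in the same spirit, by applying the intersection-dimension formula of Lemma~\ref{lem:expllaplacetransform} to the sub-collections of parameter hyperplanes sharing a prescribed point of $C$, so as to constrain the possible pairs $(A,A')$ and then exclude them.
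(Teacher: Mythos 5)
Your overall strategy is genuinely different from the paper's: you take a \emph{full} extensive lift $\hat P\colon\Sigma_{d+1,d+1}\to\RP^{2d+2}$ with a line $C$ as center, and try to force $C\subset\hat{\mathcal Q}$ from three distinct points of $C\cap\hat{\mathcal Q}$. The paper instead lifts only in one direction, to a $\Sigma_{d+1,d}$-extensive net in $\RP^{2d+1}$ with a \emph{point} $Z$ as center; this asymmetric lift is what lets hypothesis (2) be used (it transfers directly to ``each $\pav{\tilde T}(i)$ is $(d-1)$-dimensional''), first to glue the inscribed quadrics of the two $\Sigma_{d+1,d}$ halves into one quadric $\tilde{\mathcal Q}$, and then to run a singular-point analysis showing $Z\in\pah{\tilde S}(j)$ for all $j$.

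The step you flag as ``the main obstacle'' is not an obstacle to be ruled out --- it fails unconditionally, and this breaks the proof. Because $P$ is a $d$-grid, $\pah{P}(j)\vee\pah{P}(j+1)$ is the join of a $\Sigma_{d,1}$-subpatch and hence only $(d+1)$-dimensional, whereas $\pah{\hat P}(j)\vee\pah{\hat P}(j+1)$ is $(d+2)$-dimensional by extensivity of $\hat P$. The dimension formula for central projections then gives
\begin{align*}
\dim\bigl(\bigl(\pah{\hat P}(j)\vee\pah{\hat P}(j+1)\bigr)\cap C\bigr)=(d+2)-(d+1)-1=0,
\end{align*}
so this intersection is a single point containing both $B_j$ and $B_{j+1}$; hence $B_j=B_{j+1}$ for every $j$, and likewise all $A_i$ coincide. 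So $C\cap\hat{\mathcal Q}$ is only ever known to contain the \emph{one} point $B=\bigcap_j\pah{\hat P}(j)$ under hypothesis (i), the ``three points on a line force the line into the quadric'' mechanism never applies, and $C=A\vee B$ cannot be shown isotropic this way. What your setup actually reduces the lemma to is the claim that $B\in\hat{\mathcal Q}$ if and only if $A\in\hat{\mathcal Q}$, where $A$ and $B$ are the two iterated Laplace points $\bigcap_i\pav{\hat P}(i)$ and $\bigcap_j\pah{\hat P}(j)$ spanning $C$ --- a nontrivial statement for which your proposal contains no argument. (A smaller issue: $\pah{\hat P}(j)=\pah{\hat S}(j)^{\perp}$ presumes $\hat{\mathcal Q}$ non-degenerate, which is not known a priori on a square patch; the weaker identity $\pah{\hat P}(j)\cap\hat{\mathcal Q}=\pah{\hat S}(j)$ that you actually need can be salvaged, but only after excluding $\pah{\hat P}(j)\subset\hat{\mathcal Q}$.)
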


\begin{proof}
	Let $\mathcal{C}$ be an instance of touching conics such that statement $2$ is satisfied. We will show that each $\pah{S}(j)$ is $(d-1)$-dimensional.
	It is not difficult to see that there is a $\Sigma_{d+1,d}$-extensive Kœnigs net $\tilde{P}: \Sigma_{d+1,d+1} \to \RP^{2d+1}$ and a central projection $\pi$ with a point $Z$ as center, such that
	\begin{enumerate}
		\item $\pi(\tilde P) = P$,
		\item each $\pah{\tilde{P}}(j)$ is $(d+1)$-dimensional, and 
		\item each $\pav{\tilde{P}}(i)$ is $d$-dimensional.
	\end{enumerate}

     Let $\tilde{\mathcal{C}}(i,j)$ be the touching conics of $\tilde{P}$ that project via $\pi$ to $\mathcal{C}(i,j)$. Let $\tilde{S}(i,j)$ and $\tilde{T}(i,j)$ be the tangency points of the touching conics $\tilde{\mathcal{C}}(i,j)$.
	We consider the two restrictions $\South \tilde P$, $\North \tilde P$ of $\tilde{P}$ defined respectively on the $\Sigma_{d+1,d}$ patches 
    \begin{align}
		\Sigma_{d+1} \times \{0,1, \ldots, d\} , & & \Sigma_{d+1} \times \{1,2, \ldots, d+1\}.
	\end{align}
    
    Theorem~\ref{thm:BSquadriclocalpatch} determines unique inscribed quadrics $\North \mathcal{\tilde{Q}}$ and $\South \mathcal{\tilde{Q}}$ for $\North \tilde P$ and $\South \tilde P$, respectively. 
    Next, we will show that $\North \mathcal{\tilde{Q}} = \South \mathcal{\tilde{Q}}$. Because of the uniqueness of inscribed quadrics, it suffices to show that $\South \mathcal{\tilde{Q}}$ is also an inscribed quadric for $\North \tilde P$. Consider the plane
	    \begin{align}
		\tilde \Pi(0,d) = \tilde P(0,d) \vee \tilde P(1,d) \vee \tilde P(0,d+1),
	\end{align}
	which belongs to a quad of $\North \tilde P$ but not $\South \tilde P$. This quad comes with the conic ${\mathcal{\tilde C}}(0,d)$ which lies in $\North \mathcal {\tilde Q}$ by definition of $\North \mathcal{\tilde Q}$, but a priori not in $\South \mathcal{\tilde Q}$. So let us also consider the conic 
	\begin{align}
		\mathcal{\tilde K}(0,d) := \South \mathcal{\tilde Q} \cap \tilde{\Pi}(0,d).
	\end{align}
	Clearly, the point $\tilde S(0,d)$ is both in $\mathcal{\tilde C}(0,d)$ and $\mathcal{\tilde K}(0,d)$ since this point is contained in both $\South \tilde P$ and $\North \tilde P$. Moreover, the tangent in $\tilde S(0,d)$ is the same for $\mathcal{\tilde C}(0,d)$ and $\mathcal{\tilde K}(0,d)$ because it is the line $\tilde{P}(0,d) \vee \tilde{P}(1,d)$ which is contained in both $\South \tilde P$ and $\North \tilde P$.
	
	Because the lift has the property that each $\pav {\tilde P}(i)$ is $d$-dimensional, each $\pav {\tilde T}(i)$ has dimension $d-1$ (the same dimension as each $\pav{T}(i)$ by assumption). Then, $\North \pav {\tilde T}(i)$ equals $\South \pav {\tilde T}(i)$ for all $i$ because they are both equal to $\pav {\tilde T}(i)$. This implies that the point ${\tilde T}(0,d)$ is contained in $\South \mathcal{\tilde Q}$ and also $\North \mathcal{\tilde Q}$. It also implies that the line $\tilde P(0,d) \vee  \tilde T(0,d)$ is tangent to $\South \mathcal{\tilde Q}$ and also $\North \mathcal{\tilde Q}$. Therefore, $ \tilde T(0,d)$ is also contained in $\mathcal{\tilde K}(0,d)$, and the line $\tilde P(0,d) \vee \tilde T(0,d)$ is tangent to $\mathcal{\tilde K}(0,d)$. Furthermore, $\mathcal{\tilde C}(0,d)$ also contains $\tilde T(0,d)$ with the same tangent line. Finally, the same arguments apply to $\tilde T(1,d)$ and the tangent line $\tilde P(1,d) \vee \tilde T(1,d)$. Consequently, the two conics $\mathcal{\tilde C}(0,d)$ and $\mathcal{\tilde K}(0,d)$ share three points and the tangents in these three points. Therefore, the two conics coincide. Analogous arguments show that
	\begin{align}
		 \mathcal{\tilde C}(i,d) = \mathcal{\tilde K}(i,d)
	\end{align} 
	for all $i$. Hence, $\South \mathcal{\tilde Q}$ contains all touching conics of $\North \tilde P$ as required for an inscribed quadric of $\North \tilde{P}$. 
	
	Next, let us show that $\South \mathcal{\tilde Q}$ contains $\pah {\tilde S}(d+1)$.  Without loss of generality, we assume that $\pah {\tilde S}(d) \neq \pah {\tilde S}(d+1)$. Then, the space
	\begin{align}
		\pah{\tilde{S}}(d) \vee \pah{\tilde{S}}(d+1)
	\end{align}
	is $(d+1)$-dimensional. In this space, $\South \mathcal{\tilde Q}$ contains the $d$-space $\pah {\tilde S}(d)$ which has codimension 1. Therefore, the restriction of $\South \mathcal{\tilde Q}$ to this space must be the union of two $d$-spaces. As $\South \mathcal{\tilde Q}$ also contains $d+1$ points in  $\pah {\tilde S}(d+1)$, the second $d$-space must be $\pah {\tilde S}(d+1)$. Therefore, $\South \mathcal{\tilde Q}$ contains $\pah {\tilde S}(d+1)$.

	By definition, $\South \mathcal{\tilde Q}$ also contains $\pah {\tilde S}(j)$ for $j \in \Sigma_d$. Moreover, since $\North \pav {\tilde T}(i)$ equals $\South \pav {\tilde T}(i)$ for all $i$, the quadric $\South \mathcal{\tilde Q}$ contains all $\North \pav {\tilde T}(i)$ spaces. So far, we have shown all the conditions of Definition~\ref{def:inscribedquadric} saying that $\South \mathcal{\tilde Q}$ is an inscribed quadric for $\North \tilde P$. Recall, Theorem~\ref{thm:BSquadriclocalpatch} states that inscribed quadrics are unique. Thus, we conclude that $\North \mathcal{\tilde Q} = \South \mathcal{\tilde Q}$, and we write
	\begin{align}
		\mathcal{\tilde Q} := \North \mathcal{\tilde Q} = \South \mathcal{\tilde Q}.
	\end{align}
    By applying Lemma~\ref{lem:singularpointsinscribedquadric} to $\South \tilde{P}$ and $\North \tilde{P}$, it follows that any point in
	\begin{align}\label{eq:space1}
		\South\tilde{X} := \bigcap_{j=0}^{d} \pah{\tilde{S}}(j) \quad \mbox{and} \quad \North\tilde{X} = \bigcap_{j =1}^{d+1} \pah{\tilde{S}}(j)
	\end{align} 
	is a singular point of $\tilde{\mathcal{Q}}$. Also using Lemma~\ref{lem:singularpointsinscribedquadric} and that we are working with $\Sigma_{d+1,d}$ restrictions, we know that $\South\tilde{X}$ and $\North\tilde{X}$ are both at least $0$-dimensional and thus non-empty.

	Because we are assuming that $P_{-d}$ is nowhere Laplace degenerate and using Lemma~\ref{lem:expllaplacetransform},
we have that $\cap_{j \in \Sigma_{d+1}} \pah{P}(j)$ is empty. 	
	Therefore, as $\pah{\tilde{P}}(j)$ equals $\pah P(j) \vee Z$ for all $j$, it follows that
	\begin{align}\label{eq:rehkig}
	    Z= \bigcap_{j\in \Sigma_{d+1}} \pah{\tilde{P}}(j).
	\end{align}	
	Note that
	\begin{align}\label{eq:rshgreah}
		\South\tilde{X}  \subset \bigcap_{j\in \Sigma_{d}} \pah{\tilde{P}}(j).
	\end{align}
	Moreover, as we are assuming that $P$ is a generic $d$-grid, $\cap_{j \in \Sigma_{d}} \pah{P}(j)$ is a point. Hence, the right-hand side of \eqref{eq:rshgreah} is a $1$-dimensional space, which contains $Z$ by definition of the lift $\hat P$. Consequently, $\South\tilde{X}$ is at most $1$-dimensional.
	
	Let us distinguish two cases. First, let us assume $\South\tilde{X}$ is $1$-dimensional. Then, the $d$-space $\pah{\tilde{S}}(0)$ contains $Z$ and thus the projection $\pah{S}(0)$ is $(d-1)$-dimensional. Furthermore, by Lemma~\ref{lem:specialsolo} we have that each $\pah{S}(j)$ is $(d-1)$-dimensional. Hence, this case is fine. Analogously, if $\North\tilde{X}$ is $1$-dimensional, it also follows that $\pah{S}(j)$ is $(d-1)$-dimensional.
	
	For the second case, we assume that $\South\tilde{X}$ and  $\North\tilde{X}$ are both $0$-dimensional, which means that both spaces have only one point. Then, by applying Lemma~\ref{lem:singularpointsinscribedquadricnew} to $\South \tilde{P}$ and $\North \tilde{P}$, we have that any singular point of $\tilde{\mathcal{Q}}$ must be contained in $\South\tilde{X}$ and also $\North\tilde{X}$ -- as otherwise Lemma~\ref{lem:singularpointsinscribedquadricnew} would imply that both $\South\tilde{X}$ and also $\North\tilde{X}$ would be 1-dimensional. Since $\South\tilde{X}$ and $\North\tilde{X}$ are singular points, and every singular point has to be in both $\South\tilde{X}$ and $\North\tilde{X}$, we see that $\South\tilde{X} = \North\tilde{X}$. Moreover, both points $\South\tilde{X}$ and $\North\tilde{X}$ are contained in \eqref{eq:rehkig} and thus equal $Z$. Then, each $d$-space $\pah{\tilde{S}}(j)$ contains $Z$ and thus each projection $\pah{S}(j)$ is $(d-1)$-dimensional. This means that statement 1 is satisfied, as claimed. Symmetrically, statement 1 implies statement 2.
\end{proof}

With Lemma~\ref{lem:specialsym} we have further weakened the assumptions that we need for touching conics $\mathcal C$ to be special. Specifically -- combined with Lemma~\ref{lem:specialsolo} -- it is sufficient that one $\pah S(j)$ is $(d-1)$-dimensional for $\mathcal C$ to be special.

Recall that Lemma~\ref{lem:BSquadriclocalpatch} states that an extensive Kœnigs net $P$ on $\Sigma_{d,d}$ with an instance of touching conics $\mathcal C$ determines a unique inscribed quadric $\mathcal Q$. However, Kœnigs $d$-grids can be at most $\Sigma_{d,d}$-extensive. Thus, Lemma~\ref{lem:BSquadriclocalpatch} does not apply to $\Sigma_{d+1,d+1}$ patches of Kœnigs $d$-grids. However, the following corollary says that for $\Sigma_{d+1,d+1}$ patches of generic Kœnigs $d$-grids, the special instance of touching conics has the property that the inscribed quadrics on the four $\Sigma_{d,d}$ subpatches (which are determined by Lemma~\ref{lem:BSquadriclocalpatch}) coincide and thus determine an inscribed quadric on $\Sigma_{d+1,d+1}$.

\begin{corollary}\label{cor:gridlargerspecial}
	Let $P\colon \Sigma_{d+1,d+1} \to \RP^{2d}$ be a generic Kœnigs $d$-grid with special touching conics $\mathcal C$. Then, there is a unique inscribed quadric $\mathcal Q$, and $\mathcal Q$ is non-degenerate.
\end{corollary}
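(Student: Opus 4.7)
The plan is to adapt the horizontal-lift argument from the proof of Lemma~\ref{lem:specialsym}: build an auxiliary inscribed quadric one dimension higher, project it down to $\RP^{2d}$, verify the inscribed-quadric conditions, and use the singular-locus analysis of Lemmas~\ref{lem:singularpointsinscribedquadric} and~\ref{lem:singularpointsinscribedquadricnew}, together with the generic-$d$-grid hypothesis, to rule out degeneracy of the resulting $\mathcal Q$.

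For existence and uniqueness, I would use Lemma~\ref{lem:extensive} to produce a $\Sigma_{d+1,d}$-extensive lift $\tilde P \colon \Sigma_{d+1,d+1} \to \RP^{2d+1}$ of $P$ with central projection $\pi$ centered at a point $Z$, and take $\tilde{\mathcal C}$ to be the lift of $\mathcal C$. The proof of Lemma~\ref{lem:specialsym} already establishes that $\North \tilde{\mathcal Q} = \South \tilde{\mathcal Q} =: \tilde{\mathcal Q}$ and that $Z$ lies in every $\pah{\tilde S}(j)$ (and in every $\pav{\tilde T}(i)$, since the vertical parameter spaces in the horizontal lift remain $d$-dimensional). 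Then Lemma~\ref{lem:singularpointsinscribedquadric} applied to $\South \tilde P$ forces $Z$ to be a singular point of $\tilde{\mathcal Q}$, so $\tilde{\mathcal Q}$ is a cone with apex $Z$, and I set $\mathcal Q := \tilde{\mathcal Q} \cap \RP^{2d}$. Using $\pah{\tilde S}(j) = \pah S(j) \vee Z$ and the fact that $\pi$ restricts to an isomorphism on each plane $\tilde \Pi(i,j)$ (which does not contain $Z$), one checks routinely that $\mathcal C(i,j) = \mathcal Q \cap \Pi(i,j)$ and that every $\pah S(j)$ and every $\pav T(i)$ is contained in $\mathcal Q$; hence $\mathcal Q$ is an inscribed quadric for $P$ on $\Sigma_{d+1,d+1}$. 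Uniqueness is immediate, because any inscribed quadric restricts to an inscribed quadric of the extensive $\Sigma_{d,d}$ SW subpatch, and Theorem~\ref{thm:BSquadriclocalpatch} makes that restriction unique as a quadric in $\RP^{2d}$.

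For non-degeneracy, I would show that the only singular point of $\tilde{\mathcal Q}$ is $Z$. The crucial preliminary identity is $\bigcap_{j=0}^{d+1} \pah{\tilde S}(j) = \{Z\}$: using $\pah{\tilde P}(j) = \pah P(j) \vee Z$ and Lemma~\ref{lem:expllaplacetransform} applied to the generic $d$-grid $P$ (so that the nowhere-Laplace-degeneracy of $P_{-d}$ yields $\bigcap_{j=0}^{d+1}\pah P(j) = \emptyset$), one obtains $\bigcap_{j=0}^{d+1}\pah{\tilde P}(j) = \{Z\}$, and then intersection with the $\pah{\tilde S}(j) \subset \pah{\tilde P}(j)$ gives the claim. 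Lemma~\ref{lem:singularpointsinscribedquadricnew} applied to $\South \tilde P$ (with $a-b = 1$) then says that any singular point of $\tilde{\mathcal Q}$ not lying in $\tilde X^{\South} := \bigcap_{j=0}^{d} \pah{\tilde S}(j)$ forces $\tilde X^{\South}$ to be at least $1$-dimensional, and the same goes for $\tilde X^{\North}$. Since both $\tilde X^{\South}$ and $\tilde X^{\North}$ are contained in lines through $Z$ (namely $P_{-d}(0,0)\vee Z$ and $P_{-d}(0,1)\vee Z$) and their joint intersection is $\{Z\}$, a singular point of $\tilde{\mathcal Q}$ off $Z$ would place the distinct Laplace points $P_{-d}(0,0)$ and $P_{-d}(0,1)$ simultaneously in every $\pah S(j)$ of the overlapping range $j=1,\dots,d$, and symmetrically via the vertical $\Sigma_{d,d+1}$-extensive lift would place $P_d(0,0)$ and $P_d(1,0)$ in every $\pav T(i)$ for $i=1,\dots,d$.

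\textbf{Main obstacle.} The hard part is to close out this last case by turning these forced incidences into a contradiction. I expect the contradiction to come from combining the explicit description of special touching conics in Lemma~\ref{lem:uniquespecialtouchingconics} with Lemma~\ref{lem:Disadgrid}: the incidences above would embed the Laplace points in the sequence of $d$-dimensional spaces $\pah D(j)$ and $\pav D(i)$ of the diagonal intersection net, and the nowhere-Laplace-degeneracy of $P_{\pm d}$ then prevents the two distinct Laplace points from lying simultaneously in all these spaces. Once this is ruled out, $\tilde X^{\South} = \{Z\}$ is the only remaining possibility, so $Z$ is the unique singular point of $\tilde{\mathcal Q}$ and $\mathcal Q$ is non-degenerate.
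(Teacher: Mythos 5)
Your overall strategy — pass to the lift $\tilde P$ in $\RP^{2d+1}$ from the proof of Lemma~\ref{lem:specialsym}, take $\mathcal Q = \tilde{\mathcal Q}\cap\RP^{2d}$, get uniqueness from Theorem~\ref{thm:BSquadriclocalpatch} on a $\Sigma_{d,d}$ subpatch, and get non-degeneracy by showing $Z$ is the only singular point of $\tilde{\mathcal Q}$ — is exactly the paper's. The existence and uniqueness parts of your write-up are fine, except for one false parenthetical: $Z$ does \emph{not} lie in the spaces $\pav{\tilde{T}}(i)$. In the lift used in Lemma~\ref{lem:specialsym} the vertical parameter spaces $\pav{\tilde{P}}(i)$ stay $d$-dimensional, so $\pav{\tilde{T}}(i)$ and its projection $\pav{T}(i)$ are both $(d-1)$-dimensional, which forces $Z\notin\pav{\tilde{T}}(i)$. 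This does not hurt you: $Z\in\bigcap_j\pah{\tilde{S}}(j)$ already makes $Z$ singular via Lemma~\ref{lem:singularpointsinscribedquadric}, and $\pav{T}(i)\subset\mathcal Q$ then follows because the cone $\tilde{\mathcal Q}$ contains the line from its apex $Z$ through any point of $\pav{\tilde{T}}(i)$.

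The genuine gap is the one you flag yourself: the non-degeneracy argument is not closed. You correctly reduce to excluding the case where $\South\tilde{X}$ and $\North\tilde{X}$ are the full lines $P_{-d}(0,0)\vee Z$ and $P_{-d}(0,1)\vee Z$, but the proposed contradiction is only a conjecture ("I expect the contradiction to come from\dots"). To make it work along the lines you sketch, you would need that the two distinct points $P_{-d}(0,0)$ and $P_{-d}(0,1)$ cannot both lie in $\pah{S}(j)\subset\pah{D}(j)\cap\pah{D}(j-1)$ for $j=1,\dots,d$, i.e.\ that $d+1$ consecutive spaces $\pah{D}(j)$ meet in at most a point; this is a statement of Lemma~\ref{lem:expllaplacetransform}-type for the diagonal intersection net $D$, and neither Lemma~\ref{lem:Disadgrid} nor Lemma~\ref{lem:PdgridwithDdgrid} establishes the extensivity/genericity of $D$ needed to invoke it. So as written the key step is missing. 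For comparison, the paper's proof of the corollary is three sentences: it does not re-run the singular-locus analysis but simply cites the conclusion reached in the second case of the case distinction at the end of the proof of Lemma~\ref{lem:specialsym}, namely that every singular point of $\tilde{\mathcal Q}$ lies in $\South\tilde{X}\cap\North\tilde{X}=\{Z\}$. In other words, the paper asserts uniqueness of the singular point where you (more honestly) stop to worry about the case in which $\South\tilde{X}$ is one-dimensional; you have identified the right pressure point, but you have not supplied the argument that resolves it.
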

\proof{
	The quadric $\mathcal Q$ is the quadric $\tilde {\mathcal Q}$ (defined in $\RP^{2d +1}$ in the proof of Lemma~\ref{lem:specialsym}) restricted to $\RP^{2d}$. Note that  $\tilde {\mathcal Q}$ has only one singular point, namely the center of projection $Z$. Since $Z$ is not in $\RP^{2d}$, the singular point disappears when we intersect $\tilde{\mathcal{Q}}$ with $\RP^{2d}$.\qed
}

\begin{remark}
For non-generic Kœnigs grids, degenerations occur. For example, let $P \colon \Z^2 \to \RP^2$ be a $1$-grid such that all lines $\pav{P}(i)$ are concurrent at some point, say $Y$, and such that all lines $\pah{P}(j)$ are concurrent at some point, say $X$. Then, $P$ is a Kœnigs $1$-grid but it is not generic. The gridlines are not tangent to a non-degenerate conic. Instead, the gridlines are tangent to the degenerate conic $X \vee Y$.
\end{remark}

Corollary~\ref{cor:gridlargerspecial} gives us access to the special inscribed quadric $\mathcal Q$ on $\Sigma_{d+1,d+1}$ patches of Kœnigs grids (earlier we only had inscribed quadrics on $\Sigma_{d,d}$ patches). However, Corollary~\ref{cor:gridlargerspecial} relies on the existence of special conics on $\Sigma_{d+1,d+1}$ patches, which we establish next.

\begin{lemma}\label{lem:uniqueconics}
	Let $P\colon \Sigma_{d+1,d+1} \to \RP^{2d}$ be a generic Kœnigs $d$-grid. Among, the $1$-parameter family of touching conics, there is exactly one instance of special touching conics.
\end{lemma}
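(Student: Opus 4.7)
The plan is to reduce the full $\Sigma_{d+1,d+1}$ statement to a statement about the first row of quads, where Lemma~\ref{lem:uniquespecialtouchingconics} applies. I would consider the restriction $P'$ of $P$ to $\Sigma_{d+1} \times \{0,1\}$, which is a non-degenerate Q-net on $\Sigma_{d+1,1}$. Its $\Sigma_{d,1}$-extensivity follows from the $\Sigma_{d,d}$-extensivity of $P$; the spaces $\pah{P}(0)$ and $\pah{P}(1)$ are $d$-dimensional because $P$ is a $d$-grid; and by Lemma~\ref{lem:Disadgrid}, $\pah{D}(0)$ is $d$-dimensional. Granting for the moment that $P'_1$ is nowhere Laplace degenerate, Lemma~\ref{lem:uniquespecialtouchingconics} yields a unique instance of touching conics on $P'$ for which $\pah{S}(0)$ is $(d-1)$-dimensional.

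Next I would argue that restriction induces a bijection from the $1$-parameter family of touching conics on $P$ to the $1$-parameter family on $P'$. The tangency points on the horizontal edges of row $0$ determine, via the touching condition on shared edges, the tangency points on the edges between row $0$ and row $1$. Each inscribed conic of row $1$ is then recovered uniquely, since a conic inscribed in four given lines is determined by a single additional tangency point. Iterating row by row propagates the inscribed conics to all of $P$, so the unique special instance on $P'$ extends to a unique instance $\mathcal{C}$ of touching conics on $P$ with $\pah{S}(0)$ of dimension $d-1$.

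It remains to verify that $\mathcal{C}$ is special in the sense of Definition~\ref{def:special}. Lemma~\ref{lem:specialsolo} promotes the dimension condition at $\pah{S}(0)$ to every $\pah{S}(j)$, and Lemma~\ref{lem:specialsym} then forces every $\pav{T}(i)$ to be $(d-1)$-dimensional as well, so $\mathcal{C}$ is indeed special. For uniqueness, any special instance of touching conics on $P$ restricts to an instance on $P'$ with $\pah{S}(0)$ of dimension $d-1$; the uniqueness clause of Lemma~\ref{lem:uniquespecialtouchingconics} forces this restriction to coincide with the one constructed above, and the bijection between the two $1$-parameter families then shows that the global instance must coincide with $\mathcal{C}$.

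The main obstacle is the auxiliary verification that $P'_1$ is nowhere Laplace degenerate. Since $P_d$ is nowhere Laplace degenerate by the genericity assumption and each intermediate Laplace transform $P_k$ is well-defined, I expect this to follow by iterating the dimension identities of Lemma~\ref{lem:expllaplacetransform} together with the $\Sigma_{d,d}$-extensivity of $P$; aside from this point, every step of the argument is a direct application of a lemma already established in this or the previous subsection.
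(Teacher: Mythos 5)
Your proposal is correct and follows essentially the same route as the paper's proof, which likewise chains Lemma~\ref{lem:Disadgrid}, Lemma~\ref{lem:uniquespecialtouchingconics} (applied to the first row of quads), Lemma~\ref{lem:specialsolo} and Lemma~\ref{lem:specialsym}; you merely make explicit the restriction to $\Sigma_{d+1}\times\{0,1\}$ and the row-by-row propagation that the paper leaves implicit. The one loose end you flag closes immediately: genericity requires $P_d$ to exist, so every intermediate $P_k$ is a non-degenerate Q-net, and in particular $P_1(i,j)\neq P_1(i+1,j)$, which is exactly the nowhere-Laplace-degenerate condition needed for Lemma~\ref{lem:uniquespecialtouchingconics}.
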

\proof{
    By Lemma~\ref{lem:Disadgrid}, all the parameter spaces of $D$ are $d$-dimensional. By Lemma~\ref{lem:uniquespecialtouchingconics}, there is a unique instance of touching conics such that $\pah{S}(0)$ and $\pah{S}(1)$ are $(d-1)$-dimensional. By Lemma~\ref{lem:specialsolo}, all the spaces $\pah{S}(j)$ are $(d-1)$-dimensional. By Lemma~\ref{lem:specialsym}, all the spaces $\pav{T}(i)$ are also $(d-1)$-dimensional. Thus, we obtain the instance of special touching conics. \qed
}

\begin{theorem}\label{th:uniquespeciality}
	Let $P\colon \Sigma \to \RP^{2d}$ be a generic Kœnigs $d$-grid, where we assume that $\Sigma_{d+1,d+1}$ is contained in  $\Sigma$. 
	\begin{enumerate}
		\item There is a unique instance of special touching conics $\mathcal C$.
		\item There is a unique inscribed quadric $\mathcal Q$ corresponding to the special touching conics $\mathcal C$, and we call $\mathcal Q$ the \emph{special inscribed quadric}.
	\end{enumerate}
\end{theorem}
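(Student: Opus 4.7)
The strategy is to establish the theorem on a single $\Sigma_{d+1,d+1}$ subpatch and then extend globally using the inscribed quadric.

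\textbf{Local base case.} Fix any $\Sigma_{d+1,d+1}$ subpatch $Q_0 \subseteq \Sigma$. By Lemma~\ref{lem:uniqueconics} applied to $P|_{Q_0}$, there is a unique instance of special touching conics $\mathcal{C}_0$ on $Q_0$, and by Corollary~\ref{cor:gridlargerspecial} these determine a unique non-degenerate inscribed quadric $\mathcal{Q} \subset \RP^{2d}$. The touching points of $\mathcal{C}_0$ span $(d-1)$-dimensional isotropic subspaces $L_j := \pah{S}(j)$ and $M_i := \pav{T}(i)$ (computed on $Q_0$); the quadric $\mathcal{Q}$ is tangent to each $\pah{P}(j)$ along $L_j$ and to each $\pav{P}(i)$ along $M_i$.

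\textbf{Global extension via $\mathcal{Q}$.} Because $P$ is a $d$-grid on all of $\Sigma$, each parameter space $\pah{P}(j)$ and $\pav{P}(i)$ is a fixed $d$-dimensional projective subspace, independent of the subpatch used to compute it. For every edge and every quad of $P$ on $\Sigma$, define
\begin{align}
    S(i,j) &:= \big(P(i,j) \vee P(i+1,j)\big) \cap L_j, \\
    T(i,j) &:= \big(P(i,j) \vee P(i,j+1)\big) \cap M_i, \\
    \mathcal{C}(i,j) &:= \mathcal{Q} \cap \Pi(i,j),
\end{align}
where $\Pi(i,j)$ denotes the plane of the quad at $(i,j)$. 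Since $\mathcal{Q}$ is tangent to $\pah{P}(j)$ along $L_j$, its tangent hyperplane at each point of $L_j$ contains $\pah{P}(j)$ and hence the edge line $P(i,j)\vee P(i+1,j)$; intersecting with $\Pi(i,j)$ shows that $\mathcal{C}(i,j)$ is tangent to this edge at $S(i,j)$, and similarly at $T(i,j)$. Because $S(i,j)$ and $T(i,j)$ depend only on the edge and on $\mathcal{Q}$, the conics of adjacent quads meet tangentially at the shared edge. Hence $\mathcal{C}$ is a global instance of touching conics with inscribed quadric $\mathcal{Q}$, and it is special: each $L_j$ and $M_i$ is $(d-1)$-dimensional by construction.

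\textbf{Uniqueness and main obstacle.} Any alternative global instance of special touching conics restricts on $Q_0$ to a second instance of special touching conics there, contradicting Lemma~\ref{lem:uniqueconics}; analogous reasoning using Corollary~\ref{cor:gridlargerspecial} gives uniqueness of the inscribed quadric. The main obstacle is the verification in the extension step that the globally defined $S, T, \mathcal{C}$ genuinely form touching conics. This works because $P$ is a $d$-grid, so its parameter spaces, and consequently the isotropic subspaces $L_j$ and $M_i$, are intrinsic to $\mathcal{Q}$ rather than local to $Q_0$; this propagates the tangency data of $\mathcal{Q}$ consistently to every quad of $\Sigma$.
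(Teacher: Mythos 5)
Your proof has a genuine gap in the ``global extension'' step. The subspaces $L_j = \pah{S}(j)$ and $M_i = \pav{T}(i)$ are obtained from the special touching conics on the single base patch $Q_0$, so they are only defined for $j,i \in \{0,\dots,d+1\}$, and the tangency of $\mathcal Q$ to the parameter spaces is likewise only known for $\pah{P}(j)$ and $\pav{P}(i)$ with indices in that range (via Lemma~\ref{lem:BSquadriclocalpatch} applied on $Q_0$). For a quad of $\Sigma$ outside $Q_0$ --- say one meeting the row $j = d+2$ --- your formula $S(i,j) := (P(i,j)\vee P(i+1,j)) \cap L_j$ is simply undefined, and even if you set $L_{d+2} := \pah{P}(d+2)\cap\mathcal Q$, you have not shown that this intersection is a $(d-1)$-dimensional isotropic space along which $\pah{P}(d+2)$ is tangent to $\mathcal Q$, nor that $\mathcal Q \cap \Pi(i,d+1)$ is then an inscribed (four-edge-tangent) conic of that quad. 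That tangency of the \emph{one} quadric $\mathcal Q$ to \emph{all} parameter spaces of the grid is essentially the content of Theorem~\ref{thm:BSKoenigsconstrainedtangentquadric} and cannot be assumed; your phrase ``the isotropic subspaces $L_j$ and $M_i$ are intrinsic to $\mathcal Q$ rather than local to $Q_0$'' is exactly the assertion that needs proof.

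The missing ingredient is the gluing of \emph{overlapping} maximal patches, which is how the paper proceeds: given two adjacent $\Sigma_{d+1,d+1}$ (or larger) patches overlapping in a patch that still contains $\Sigma_{d,d+1}$, the special touching conics of each are forced to agree on the overlap because the touching-point spaces $\pah{S}(j)$ there determine them uniquely (Lemma~\ref{lem:uniquespecialtouchingconics}, in particular Equation~\eqref{eq:formulaforSpointsofspecialtouchingconics}); consequently the two special inscribed quadrics coincide, since the inscribed quadric is already uniquely determined by the touching conics on the overlap (Theorem~\ref{thm:BSquadriclocalpatch}). Iterating this over all shifts propagates both the special conics and the quadric to all of $\Sigma$. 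Your uniqueness argument (any global special instance restricts to the unique one on $Q_0$, hence is determined everywhere) is fine, but existence requires this induction; without it the proof does not go through.
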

\proof{
	Let us prove the first claim first. If $\Sigma = \Sigma_{d+1,d+1}$ then the claim follows from Lemma~\ref{lem:uniqueconics}. For larger patches, we use an iterative gluing method. Assume the claim is proven on a $\Sigma_{a,b}$ patch with $a,b \geq d+1$. We want to show that the claim also holds on a $\Sigma_{a+1,b}$ patch. Let us denote by $\West P$, $\WEmid P$ and $\East P$ the three restrictions of $P$ to the subpatches
	\begin{align}
		&\{(i,j) \mid i,j \in \Sigma_{a,b}\},  & & \{(i+1,j) \mid i,j \in \Sigma_{a-1,b}\}, & & \{(i+1,j) \mid i,j \in \Sigma_{a,b}\}.
	\end{align}
	By applying the inductive assumption to $\West P$ and $\East P$ we obtain two sets of special conics $\West \mathcal C$ and $\East \mathcal C$. Moreover, the $(d-1)$-spaces $\West \pah S(j)$ and $\East \pah S(j)$ of touching points are already determined by $\WEmid \pah S(j)$. Thus, we deduce that $\West \pah S(j)= \East \pah S(j)$. Subsequently, since the touching points agree, we also conclude that $\West \mathcal C = \East \mathcal C = \mathcal C$ (wherever they overlap). This concludes the proof of the first claim.

	The arguments for the second claim are similar. If $\Sigma = \Sigma_{d+1,d+1}$ then the claim follows from Corollary~\ref{cor:gridlargerspecial}. Using the same restrictions as before, the inductive claim provides us with three special inscribed quadrics $\West \mathcal Q$, $\WEmid \mathcal Q$ and $\East \mathcal Q$. The three quadrics coincide because the inscribed quadric for the touching conics $\WEmid \mathcal C$ is unique (by Theorem~\ref{thm:BSquadriclocalpatch}), which proves the second claim. 
    \qed
}

Combining Theorem~\ref{th:uniquespeciality} and Lemma~\ref{lem:BSquadriclocalpatch}, this completes the proof of Theorem~\ref{thm:BSKoenigsconstrainedtangentquadric}. Of course, Lemma~\ref{lem:BSquadriclocalpatch} still applies, therefore the $d$-spaces $\pah P(j)$ are tangent to the special inscribed quadric $\mathcal Q$ along the $(d-1)$-spaces $\pah S(j)$. Analogously, the $d$-spaces $\pah P(i)$ are tangent to the special inscribed quadric $\mathcal Q$ along the $(d-1)$-spaces $\pav T(i)$. Moreover, since Corollary~\ref{cor:gridlargerspecial} provides non-degenerate quadrics, this implies that $\mathcal Q$ is also non-degenerate.

The following Lemma~\ref{lem:PdgridwithDdgrid} is a generalization of Lemma~\ref{lem:Disadgrid}, which only considered the case $\Sigma_{d+1,d+1}$.

\begin{lemma} \label{lem:PdgridwithDdgrid}
	Let $P\colon \Sigma \to \RP^{2d}$ be a generic Kœnigs $d$-grid, with $\Sigma_{d+1,d+1} \subset \Sigma$. Then, the parameter spaces $\pah{D}(j)$ and $\pav{D}(i)$ of the diagonal intersection net are $d$-dimensional. Moreover, the touching points of the special touching conics $\mathcal C$ are given by
	\begin{align}
		S(i,j) &= \pah D(j) \cap (P(i,j) \vee P(i+1,j)) \label{eq:igbrgi}, \\
		T(i,j) &= \pav D(i) \cap (P(i,j) \vee P(i,j+1)) \label{eq:rogubrgi}.
	\end{align}

\end{lemma}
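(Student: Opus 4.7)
The plan is to combine Lemma~\ref{lem:Disadgrid} (the $\Sigma_{d+1,d+1}$ case) with the global uniqueness of special touching conics from Theorem~\ref{th:uniquespeciality}, bootstrapping the local dimension statement into a global one. The proof naturally splits into two parts: first the dimension claim on $\pah{D}(j)$ and $\pav{D}(i)$, then the explicit formulas for $S$ and $T$.

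For the dimension claim, fix an arbitrary index $j$ and select any $\Sigma_{d+1,d+1}$-subpatch of $P$ containing the two rows indexed $j$ and $j+1$. Such a subpatch is itself a generic Kœnigs $d$-grid (these properties are clearly inherited by restriction), so Lemma~\ref{lem:Disadgrid} shows that $\pah{D}(j)$ restricted to this subpatch is $d$-dimensional. Applied to the $\Sigma_{d+1,1}$ restriction to these two rows, Lemma~\ref{lem:uniquespecialtouchingconics} further identifies this $d$-space with $\pah{S}(j) \vee \pah{S}(j+1)$, where $S$ denotes the touching points of the special touching conics. Now Theorem~\ref{th:uniquespeciality} guarantees a \emph{globally} unique instance of special touching conics, so the $(d-1)$-spaces $\pah{S}(j)$ and $\pah{S}(j+1)$ are intrinsic to $P$ and independent of the chosen subpatch. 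Hence the fixed $d$-dimensional join $\pah{S}(j) \vee \pah{S}(j+1)$ contains every point $D(i,j)$ across the full domain $\Sigma$, which shows that $\pah{D}(j)$ is globally $d$-dimensional. The statement for $\pav{D}(i)$ follows by the symmetric argument applied to columns.

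For the formulas, the identification $\pah{D}(j) = \pah{S}(j) \vee \pah{S}(j+1)$ just established immediately gives $S(i,j) \in \pah{S}(j) \subset \pah{D}(j)$, and by definition $S(i,j)$ lies on the edge-line $P(i,j) \vee P(i+1,j)$. Thus $S(i,j)$ belongs to the right-hand side of \eqref{eq:igbrgi}. To upgrade the inclusion to equality, I argue that the intersection is $0$-dimensional. Both $\pah{D}(j)$ and the line $P(i,j) \vee P(i+1,j) \subset \pah{P}(j)$ sit inside $\pah{P}(j) \vee \pah{P}(j+1)$, which is $(d+1)$-dimensional. In such an ambient, a line and a $d$-space either meet in a single point or the line lies in the $d$-space. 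The latter would force $P(i,j) \in \pah{D}(j) \cap \pah{P}(j) = \pah{S}(j)$ (here the intersection is $(d-1)$-dimensional because $\pah{D}(j) \neq \pah{P}(j)$). However, $\pah{S}(j)$ is contained in the non-degenerate special inscribed quadric $\mathcal{Q}$ from Theorem~\ref{th:uniquespeciality} as the isotropic tangency locus of $\pah{P}(j)$, so $P(i,j) \in \pah{S}(j)$ would force the vertex $P(i,j)$ onto $\mathcal{Q}$, which fails by the genericity of $P$. Therefore the intersection in \eqref{eq:igbrgi} consists of the single point $S(i,j)$; the formula \eqref{eq:rogubrgi} for $T(i,j)$ follows by the symmetric argument in columns.

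The main obstacle is the global dimension claim: it is not purely local, because it requires that the $d$-space $\pah{S}(j) \vee \pah{S}(j+1)$ obtained from one $\Sigma_{d+1,d+1}$-subpatch agrees with that from any overlapping subpatch. This coherence is precisely the content of the global uniqueness statement in Theorem~\ref{th:uniquespeciality}, without which the argument would produce a priori different $d$-spaces on different subpatches. The subsequent dimension counting for the formulas is routine once the identification $\pah{D}(j) = \pah{S}(j) \vee \pah{S}(j+1)$ is available, and the only non-triviality there is the mild genericity step ruling out $P(i,j) \in \mathcal{Q}$.
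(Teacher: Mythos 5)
Your proof is correct and follows essentially the same route as the paper: reduce to overlapping $\Sigma_{d+1,d+1}$ subpatches via Lemma~\ref{lem:Disadgrid}, identify the local $\pah{D}(j)$ with $\pah{S}(j)\vee\pah{S}(j+1)$ via Lemma~\ref{lem:uniquespecialtouchingconics}, and glue these using the global uniqueness of the special touching conics from Theorem~\ref{th:uniquespeciality}. The only cosmetic difference is your detour through the quadric $\mathcal{Q}$ to see that the intersection in \eqref{eq:igbrgi} is a single point (the paper simply invokes \eqref{eq:formulaforSpointsofspecialtouchingconics}); a purely projective argument --- if $P(i,j)\in\pah{S}(j)$ then the edge-line $P(i,j)\vee S(i,j)$ lies in $\pah{S}(j)$, which propagates along the row and forces $\pah{P}(j)\subseteq\pah{S}(j)$, a dimension contradiction --- avoids the somewhat loose appeal to ``genericity'' at that step.
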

\proof{    
    The case $\Sigma_{d+1,d+1}$ follows from Lemma~\ref{lem:Disadgrid} and Lemma~\ref{lem:uniquespecialtouchingconics}. We now prove the case $\Sigma_{d+2,d+1}$. The proof of the general case proceeds analogously.
    
    Let $\West P$ and $\East P$ be the restrictions of $P$ defined respectively on $\Sigma_{d+1,d+1}$ and $\{1, \ldots, d+2\} \times \Sigma_{d+1}$. By Theorem~\ref{th:uniquespeciality}, we know that $P$ has a unique instance of special touching conics. Since $\pah{S}(0) =\pah{\West S}(0)$ and $\pah{P}(0) = \West \pah{P}(0)$, applying Lemma~\ref{lem:uniquespecialtouchingconics} to $\West P$ shows that \begin{align}
        \pah{S}(0) = \West \pah{D}(0) \cap \pah{P}(0).
    \end{align} Analogously, applying Lemma~\ref{lem:uniquespecialtouchingconics} to $\East P$, we have that \begin{align}\pah{S}(0) = \East \pah{D}(0) \cap \pah{P}(0).\end{align} By applying Lemma~\ref{lem:Disadgrid} to $\West P$ and also $\East P$, we know that $\West \pah{D}(0)$ and $\East \pah{D}(0)$ are both $d$-dimensional and thus they both intersect $\pah{P}(0)$ in a $(d-1)$-space since they are both contained in the $(d+1)$-space $\pah{P}(0) \vee \pah{P}(1)$. Lemma~\ref{lem:Disadgrid} also tells us that both $\West \pah{D}(0)$ and $\East \pah{D}(0)$ contain the $(d-1)$-space $\pah{S}(0)$. Therefore, $\West \pah{D}(0)= \East \pah{D}(0)$ which then also equals $\pah{D}(0)$. Therefore, $\pah{D}(0)$ is $d$-dimensional. Then, Equation~\eqref{eq:igbrgi} follows from Equation~\eqref{eq:formulaforSpointsofspecialtouchingconics}.
    By analogous arguments, all the spaces $\pah{D}(j)$ are $d$-dimensional. By symmetry, all the spaces $\pav{D}(i)$ are also $d$-dimensional and we also obtain Equation~\eqref{eq:rogubrgi}. \qed 
}

Note that a symmetric version of the formulas in Lemma~\ref{lem:PdgridwithDdgrid} also yields the equations
\begin{align}
	S(i,j) &= \pah D(j-1) \cap (P(i,j) \vee P(i+1,j)), \label{eq:wogiue}\\
	T(i,j) &= \pav D(i-1) \cap (P(i,j) \vee P(i,j+1)).\label{eq:ewiofbh}
\end{align}

For a K{\oe}nigs $d$-grid defined on a finite patch $\Sigma_{a,b}$, formulas \eqref{eq:wogiue} and \eqref{eq:ewiofbh} are useful to obtain the points $S(i,b)$ and $T(a,j)$, which cannot be obtained from the formulas \eqref{eq:igbrgi} and \eqref{eq:rogubrgi}.

\subsection{An incidence theorem}

Finally, our initial observations about the construction of Kœnigs $d$-grids together with Theorem~\ref{th:uniquespeciality} give rise to the following incidence theorem.

\begin{theorem}\label{th:bsgridincidence}
	Let $P \colon \Sigma_{d+2,d+2} \rightarrow \RP^{2d}$ be a generic $d$-grid such that its restrictions to
	\begin{align}
		\Sigma_{d+2} \times \Sigma_{d+1} \quad \mbox{and} \quad \Sigma_{d+1} \times \Sigma_{d+2}
	\end{align}
	are Kœnigs nets. Then, $P$ is also a K{\oe}nigs net.
\end{theorem}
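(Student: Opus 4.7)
The plan is to exploit the rigidity of inscribed quadrics for generic K{\oe}nigs $d$-grids (Theorem~\ref{thm:BSKoenigsconstrainedtangentquadric}) by showing that the two strips give rise to the \emph{same} inscribed quadric $\mathcal Q$, and then constructing touching conics on the full grid $P$ as intersections of $\mathcal Q$ with the face planes. Once $\mathcal Q$ is globally tangent to all parameter spaces of $P$, the touching property of the resulting conics will be forced automatically, and $P$ will be K{\oe}nigs by Definition~\ref{thm:BSKoenigstouchingconics}.

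First, I would verify that the two strips $P|_{\Sigma_{d+2} \times \Sigma_{d+1}}$ and $P|_{\Sigma_{d+1} \times \Sigma_{d+2}}$, together with their common overlap $P|_{\Sigma_{d+1,d+1}}$, are all generic K{\oe}nigs $d$-grids in the sense of Definition~\ref{def:genericgrid}. The K{\oe}nigs property on the strips is given, and on the overlap it follows by restriction. The genericity conditions -- being $\Sigma_{d,d}$-extensive, having $d$-dimensional parameter spaces, and having nowhere-Laplace-degenerate $P_{\pm d}$ -- are then inherited from those of $P$ using Lemma~\ref{lem:expllaplacetransform}.

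Next, applying Theorem~\ref{thm:BSKoenigsconstrainedtangentquadric} to each strip gives unique non-degenerate inscribed quadrics $\mathcal Q_1$ and $\mathcal Q_2$. Both are inscribed quadrics for the overlap, so the uniqueness part of Theorem~\ref{thm:BSKoenigsconstrainedtangentquadric} applied to the overlap forces $\mathcal Q_1 = \mathcal Q_2 =: \mathcal Q$. Since $P$ is a $d$-grid, each parameter space $\pav{P}(i)$ or $\pah{P}(j)$ of the full grid is $d$-dimensional and contains the $d$-dimensional corresponding parameter space of one of the strips; they therefore coincide. Consequently, every parameter space of the full grid $P$ is tangent to $\mathcal Q$ along a $(d-1)$-isotropic subspace.

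For each face $(i,j)$, define $\mathcal C(i,j) := \Pi(i,j) \cap \mathcal Q$. For any edge $E$ of this face, say $E \subset \pah{P}(j)$, the tangency of $\pah{P}(j)$ to $\mathcal Q$ along an isotropic $(d-1)$-space $L$ forces $E \cap \mathcal Q \subset \pah{P}(j) \cap \mathcal Q = L$, so $E \cap \mathcal Q$ reduces to the single point $E \cap L$; moreover, $E$ lies in the tangent hyperplane to $\mathcal Q$ at $E \cap L$, so $E$ is tangent to $\mathcal Q$ and hence to $\mathcal C(i,j)$ at this point. On any edge shared between two faces, the touching points of the two adjacent conics coincide, both being equal to $E \cap L$. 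Hence $\{\mathcal C(i,j)\}$ is an instance of touching conics for $P$, and $P$ is K{\oe}nigs by Definition~\ref{thm:BSKoenigstouchingconics}. The main obstacle is the rigorous verification of the genericity inheritance for the strips and the overlap, especially ensuring that $P_{\pm d}$ remains nowhere Laplace degenerate upon restriction; a secondary concern is to exclude the possibility that the conic on the corner face $(d+1,d+1)$ degenerates to a disallowed double line (cf.~Remark~\ref{rem:degconics}), which should follow from the non-degeneracy of $\mathcal Q$ but warrants care.
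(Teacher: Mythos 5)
Your proposal is correct and follows essentially the same route as the paper: both identify the (special) inscribed quadrics of the two strips via uniqueness on the common $\Sigma_{d+1,d+1}$ overlap, and then obtain the remaining touching conic(s) as plane sections of the common quadric $\mathcal Q$, whose tangency to all parameter spaces forces the touching property. The paper phrases this via the unique \emph{special} touching conics and quadric of Theorem~\ref{th:uniquespeciality} and only constructs the single missing corner conic, but this is the same argument in substance.
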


\begin{figure}[tb] 
	\[\includegraphics[width=0.6\textwidth]{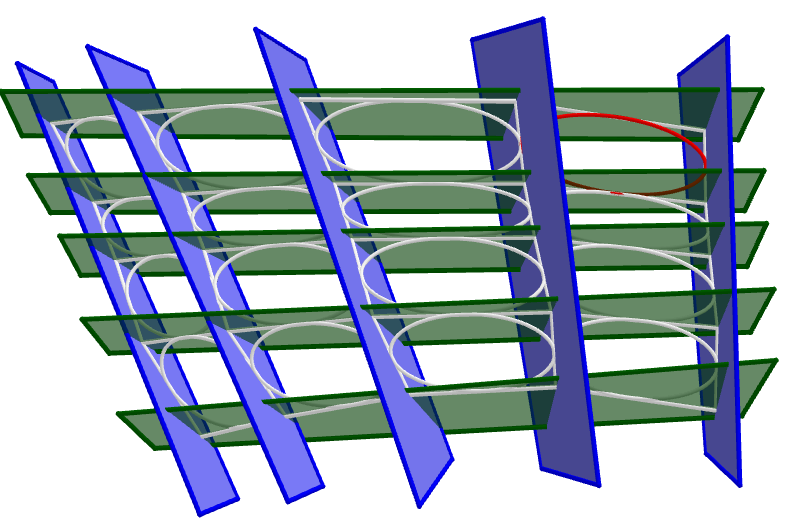}\]
	\caption{A Kœnigs grid $P \colon \Sigma_{4,4} \to \RP^3$ with two families of planar parameter lines. By Theorem~\ref{th:bsgridincidence}, the existence of the white touching conics guarantees the existence of the red touching conic.}
	\label{figure:koenigs5x5_10planes}
\end{figure}

\proof{
	Let us denote the two restrictions of $P$ in the theorem by $\South P$ and $\West P$, respectively. Due to Theorem~\ref{th:uniquespeciality} we obtain the two unique special inscribed quadrics $\South \mathcal Q$ and $\West \mathcal Q$. Moreover, on the restriction $\SW P$ to $\Sigma_{d+1,d+1}$ the two restrictions $\South P$ and $\West P$ agree. But Theorem~\ref{th:uniquespeciality} also gives a unique special quadric $\SW \mathcal Q$. Since all three quadrics  $\South \mathcal Q$, $\West \mathcal Q$ and $\SW \mathcal Q$ have codimension 1 in $\RP^{2d}$, the three quadrics coincide. Therefore, let us write $\mathcal Q := \South \mathcal Q = \West \mathcal Q = \SW \mathcal Q$. Analogously, we obtain a unique instance of special conics $\mathcal C := \South \mathcal C = \West \mathcal C = \SE \mathcal C$ for all quads but the top-right one. We define the final conic $\mathcal C(d+1,d+1)$ as the intersection of $\mathcal Q$ with the plane of the top-right quad
	\begin{align}
		P(d+1,d+1) \vee P(d+2,d+1) \vee P(d+2,d+2) \vee P(d+1,d+2).
	\end{align}
	Because $\mathcal Q$ is tangent to each $\pah{P}(j)$ and $\pav{P}(i)$, the conic $\mathcal C(d+1,d+1)$ is an inscribed conic that touches the conics $\mathcal{C}(d+1,d)$ and $\mathcal{C}(d,d+1)$. Therefore, $\mathcal C$ is an instance of touching conics in $P$. Due to Definition~\ref{thm:BSKoenigstouchingconics} we infer that $P$ is a Kœnigs net.\qed
}

The case $d=1$ of Theorem~\ref{th:bsgridincidence} was discovered in \cite[Cor.~4.3, Fig.~14]{bobenkofairley2021nets}, so Theorem~\ref{th:bsgridincidence} is a higher-dimensional generalization. The case $d=2$ is illustrated in Figure~\ref{figure:koenigs5x5_10planes}.

Note that since the restriction $\SW P$ of $P$ to $\Sigma_{d+1,d+1}$ is a generic Kœnigs $d$-grid, Theorem~\ref{th:uniquespeciality} says that $\SW P$ has an instance of special touching conics $\mathcal{C}$. Then, we can use the construction method that is explained at the beginning of Section~\ref{sec:constrained} to uniquely determine $P$ from the initial data \eqref{eq:initialkoenigsgriddata} and the touching conics $\mathcal{C}$ on the restriction of $P$ to $\Sigma_{d,d}$. More generally, the existence of special touching conics in Theorem~\ref{th:uniquespeciality} shows that any Kœnigs $d$-grid can be constructed using the method that is explained at the beginning of Section~\ref{sec:constrained}.

\subsection{Non-extensive K{\oe}nigs $d$-grids}

So far, we mostly considered generic Kœnigs $d$-grids. These grids live in $\RP^{2d}$. However, our results also have applications in lower dimensions. We provide some remarks about applications in $\RP^3$.

\begin{remark}\label{rem:koenigsgridRP3}
   Consider a (non-extensive) Kœnigs 2-grid $P\colon  \Sigma \to \RP^3$. Suppose that $P_2$ and $P_{-2}$ exist and are nowhere Laplace degenerate. This is a mild genericity condition. Then, we claim there exists a quadric $\mathcal Q$, such that the planes $\pav{P}(i)$ and $\pah{P}(j)$ are tangent to $\mathcal Q$. To see this, consider a $\Sigma_{2,2}$-extensive lift $\hat P: \Sigma \rightarrow \RP^4$ of $P$, which exists due to Lemma~\ref{lem:extensive}, so that there is a central projection $\pi$ such that $P = \pi \circ \hat P$. Because the lift $\hat P$ is a generic Kœnigs $2$-grid, Theorem~\ref{th:uniquespeciality} says that there is a (unique) special inscribed quadric $\mathcal{\hat Q}$ such that each plane $\pah{\hat{P}}(j)$ is tangent along the isotropic line $\pah{\hat{S}}(j)$ and each $\pav{\hat{P}}(i)$ is tangent along the isotropic line $\pav{\hat{T}}(i)$. Then, using the same argument as in Remark~\ref{rem:twicetangetnconics}, we obtain a unique quadric $\mathcal{Q}$ that is tangent to the parameter spaces of $P$. Let $\hat{\mathcal{C}}$ be the special touching conics of $\hat{P}$ and $\mathcal{C} := \pi (\hat{\mathcal{C}})$ be the projected touching conics. Then, Remark~\ref{rem:twicetangetnconics} also tells us that all the conics $\mathcal{C}$ are twice tangent to $\mathcal{Q}$.
\end{remark}

\begin{remark}
    Suppose $P \colon \Sigma \to \RP^3$ is a K{\oe}nigs $2$-grid. Generally, as explained in Remark~\ref{rem:koenigsgridRP3}, there is a non-degenerate quadric $\mathcal{Q}$ that is tangent to the parameter spaces of $D$. If we apply projective duality, the dual $P^\ast$ of $P$ is also a $Q$-net. It has the remarkable property that its Laplace transforms $P^\ast_{\pm 1}$ are \emph{Laplace degenerate} (Definition~\ref{defn:laplace}) and the vertices of $P^\ast_{\pm 1}$ are contained in the dual quadric $\mathcal{Q}^\ast$ of $\mathcal{Q}$. 
\end{remark}

\section{Autoconjugate curves}\label{section:autoconjugate}\label{sec:autoconjugate}

In this section, we show that autoconjugate curves appear naturally in the special inscribed quadric of a generic Kœnigs $d$-grid. Indeed, we show that they occur as the $d$-th forwards and backwards Laplace transforms of the diagonal intersection net. Moreover, we show that there is actually a bijection between generic pairs of autoconjugate curves and generic Kœnigs $d$-grids. For this, it will be practical to describe the autoconjugate curves via the parameter spaces of the nets $S$ and $T$ of the special instance of touching conics.

\subsection{Laplace transforms of K{\oe}nigs $d$-grids}\label{sec:laplacetransformKoenigsgrids}

We now record some relations between the Laplace transforms of a generic Kœnigs $d$-grid and its special inscribed quadric. We think that these are interesting in their own right, but they also support our approach to defining discrete autoconjugate curves.

\begin{theorem}\label{th:diaginquadric}
	Let $P\colon \Sigma \to \RP^{2d}$ be a generic Kœnigs $d$-grid and let $\mathcal Q$ be the unique special inscribed quadric.
    Suppose that the Laplace transforms $D_{\pm d}$ exist. Then:
	\begin{enumerate}
	\item The expressions $P_d(i,j)$ and $D_d(i,j)$ are independent of $j$, whereas $P_{-d}(i,j)$ and $D_{-d}(i,j)$ are independent of $i$. Therefore, $D_{\pm d}$ and $P_{\pm d}$ are discrete curves that are well-defined. They are given by the  formulas
         \begin{align}
              P_{d}(i) &:= P_{d}(i,j),  & P_{-d}(j) &:= P_{-d}(i,j),\\
             D_{d}(i) &:= D_{d}(i,j), & D_{-d}(j) &:= D_{-d}(i,j). \label{eq:wkghibwaggg}
        \end{align}
    \item The images of $D_{\pm 1}, \ldots, D_{\pm d}$ are contained in $\mathcal{Q}$.
    \item The join of any $d$ consecutive points of $D_{\pm d}$ is contained in $\mathcal{Q}$.
    \item The vertices of $D_{\pm d}$ are contained in the edge-lines of $P_{\pm d}$. More precisely, for all $k$,
		\begin{align}
			D_{\pm d}(k) \in P_{\pm d}(k) \vee P_{\pm d}(k+1).
		\end{align}
    \item Each point $D_{\pm }(k)$ is conjugate to the $2d$ points $P_{\pm d}(k+1-d), \ldots, P_{\pm d}(k+d)$.
	\end{enumerate}	
\end{theorem}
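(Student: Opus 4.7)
The plan is to exploit the tangency structure of the special inscribed quadric $\mathcal{Q}$ provided by Theorem~\ref{thm:BSKoenigsconstrainedtangentquadric} together with the formulas from Lemma~\ref{lem:expllaplacetransform}. By symmetry, I treat only the $+$ case; the $-$ case follows by swapping the roles of $i$ and $j$. The central geometric object is the line
\begin{align}
L_i := \bigcap_{k=1}^{d}\pav{P}(i+k),
\end{align}
which I will show is tangent to $\mathcal{Q}$ at a single point equal to $D_d(i)$, and which contains the two distinct points $P_d(i), P_d(i+1)$.

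First, for (1) with $P_d$, every $\Sigma_{d,d}$ subpatch of $P$ is extensive by the generic $d$-grid hypothesis, so Lemma~\ref{lem:expllaplacetransform} gives $P_d(i,j) = \bigcap_{k=0}^{d}\pav{P}(i+k)$, depending only on $i$. Moreover, \eqref{eq:laplacedim} with $u=d-1$ gives $\dim L_i = 1$, and by the nowhere Laplace degenerate hypothesis the distinct points $P_d(i), P_d(i+1)$ lie on $L_i$, yielding $L_i = P_d(i) \vee P_d(i+1)$. For (2), I would reprise the inductive argument of Corollary~\ref{cor:DnetofBSnetandQuadric}: Lemma~\ref{lem:concurrencyDST} places $D_1(i,j) \in \pav{T}(i+1) \subseteq \mathcal{Q}$, and the Laplace recursion $\pav{{(D_{k+1})}}(i) \subseteq \pav{{(D_k)}}(i) \cap \pav{{(D_k)}}(i+1)$ combined with induction on $k$ yields the stronger containment
\begin{align}
\pav{{(D_k)}}(i) \subseteq \bigcap_{\ell=1}^{k}\pav{T}(i+\ell) \subseteq \mathcal{Q} \qquad (1 \leq k \leq d).
\end{align}

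The crux---which I expect to be the main obstacle---is to show that $\bigcap_{k=1}^{d}\pav{T}(i+k)$ is a single point; this settles (1) for $D_d$ together with (4). By Theorem~\ref{thm:BSKoenigsconstrainedtangentquadric}, each $\pav{P}(i+k)$ is tangent to $\mathcal{Q}$ along $\pav{T}(i+k)$, so the bilinear form of $\mathcal{Q}$ restricted to $\pav{P}(i+k)$ has radical $\pav{T}(i+k)$, giving $\pav{P}(i+k)\cap\mathcal{Q} = \pav{T}(i+k)$ as point sets. Since $L_i \subseteq \pav{P}(i+k)$, this yields $L_i \cap \mathcal{Q} = L_i \cap \pav{T}(i+k)$, a single point because the line $L_i$ meets the codimension-one subspace $\pav{T}(i+k)$ of the $d$-dimensional $\pav{P}(i+k)$ in one point, provided $L_i \not\subseteq \pav{T}(i+k)$. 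The latter is ruled out by $P_d(i) \in L_i \setminus \mathcal{Q}$: if $P_d(i)$ lay in $\mathcal{Q}$, then $P_d(i) \in \bigcap_{k=0}^{d}\pav{T}(i+k)$, whose polar equals $\bigvee_{k=0}^{d}\pav{P}(i+k) = \RP^{2d}$ by $\Sigma_{d,d}$-extensivity, forcing the intersection to be empty. Consequently $\bigcap_{k=1}^{d}\pav{T}(i+k) = L_i \cap \mathcal{Q}$ is a single point, and by the containment above, $D_d(i,j)$ equals this point for every $j$ and lies on $L_i$.

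Part (3) follows by observing that for $0 \leq m \leq d-1$ the index $k+d$ belongs to $\{k+m+1,\ldots,k+m+d\}$, so $D_d(k+m) \in \pav{T}(k+d)$, and the join of these $d$ consecutive points lies in $\pav{T}(k+d) \subseteq \mathcal{Q}$. For (5), the containment $D_d(k) \in \pav{T}(k+i) = \pav{P}(k+i)^{\perp}$ for $i=1,\ldots,d$ is equivalent to $\pav{P}(k+i) \subseteq D_d(k)^{\perp}$; any $P_d(m)$ with $m \in \{k+1-d,\ldots,k+d\}$ belongs to some such $\pav{P}(k+i)$, because $\{m, m+1, \ldots, m+d\} \cap \{k+1, k+2, \ldots, k+d\}$ is non-empty exactly for this range of $m$, whence $P_d(m)$ is conjugate to $D_d(k)$.
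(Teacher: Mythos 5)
Your proposal is correct, and it shares the overall architecture of the paper's proof: both hinge on the identity $D_d(i) = \pav{T}(i+1)\cap\ldots\cap\pav{T}(i+d)$ and on Lemma~\ref{lem:expllaplacetransform} for $P_d$, and your treatments of statements (2), (3) and (5) are essentially the paper's (induction from Lemma~\ref{lem:concurrencyDST} for (2), membership of $d$ consecutive points in one $\pav{T}$ for (3), polarity of $\pav{T}$ and $\pav{P}$ for (5)). Where you genuinely diverge is in establishing that $D_d(i,j)$ is independent of $j$ and lies on $P_d(i)\vee P_d(i+1)$. The paper first shows (Lemma~\ref{lem:Disadgrid}) that the diagonal net $D$ is itself a $d$-grid, applies the explicit Laplace formula of Lemma~\ref{lem:expllaplacetransform} to $D$, and upgrades $\pav{{(D_1)}}(i)\subseteq\pav{T}(i+1)$ to an equality by a dimension count. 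You instead keep only the containments $\pav{{(D_k)}}(i)\subseteq\bigcap_{\ell=1}^{k}\pav{T}(i+\ell)$ and prove directly from the tangency structure of the special quadric that $\bigcap_{k=1}^{d}\pav{T}(i+k)=L_i\cap\mathcal{Q}$ is a single point, via the observations $\pav{P}(i+k)\cap\mathcal{Q}=\pav{T}(i+k)$ and $P_d(i)\notin\mathcal{Q}$; the containment then forces $D_d(i,j)$ to coincide with that point for every $j$. This is a clean, self-contained alternative: it avoids relying on $D$ being a $d$-grid and on applying Lemma~\ref{lem:expllaplacetransform} to $D$ (whose hypotheses for $D$ the paper does not re-verify in detail), at the cost of invoking the non-degeneracy of $\mathcal{Q}$ from Corollary~\ref{cor:gridlargerspecial} to identify $\pav{T}(i+k)$ with $\pav{P}(i+k)^{\perp}$ and to conclude $\pav{P}(i+k)\cap\mathcal{Q}=\pav{T}(i+k)$. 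Both routes are valid, and your argument that $P_d(i)\notin\mathcal{Q}$ (its membership would place it in $\bigcap_{k=0}^{d}\pav{T}(i+k)$, whose polar is all of $\RP^{2d}$ by $\Sigma_{d,d}$-extensivity) is a nice touch not spelled out in the paper.
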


Before we proceed to the proof, let us note that statement~3 in Theorem~\ref{th:diaginquadric} implies that  $D_d$ and $D_{-d}$ are autoconjugate curves in the sense of Definition~\ref{defn:autoconjugate}.

\proof[Proof of Theorem~\ref{th:diaginquadric}]{
    In the following, it suffices to prove the statements for Laplace transforms in the positive direction. The proofs in the negative direction are symmetric.

    Since $P$ is a $d$-grid, Lemma~\ref{lem:expllaplacetransform} implies that 
    \begin{align} \label{eq:esggw}
       P_d(i,j):=  \pav{P}(i) \cap \ldots \cap \pav{P}(i+d).
    \end{align} Therefore, $P_d(i,j)$ is independent of $j$. Thus, $P_d$ is a discrete curve and $P_d(i) := P_d(i,j)$ is well-defined. Since Lemma~\ref{lem:Disadgrid} says that $D$ is also a $d$-grid, we analogously obtain that 
    \begin{align} \label{eq:wkghibwg}
       D_d(i,j):=  \pav{D}(i) \cap \ldots \cap \pav{D}(i+d),
    \end{align} 
    and thus $D_d$ is a discrete curve. This proves statement~1.

	Corollary~\ref{cor:DnetofBSnetandQuadric} implies that $\pav{{(D_1)}}(i) \subset \pav T(i+1)$ for all $i$. Actually, we have equality since they are both $(d-1)$-dimensional.  Then, each $\pav{{(D_1)}}(i)$ is contained in $\mathcal{Q}$ because we already know that each $\pav T(j+1)$ is contained in $\mathcal{Q}$. Consequently, all points of all the Laplace transforms $D_{1}, \ldots, D_d$ are contained in $\mathcal Q$, as their vertices are contained in the intersections of some consecutive $\pav{{(D_1)}}(i)$ spaces. This proves statement~2. 
    
	   Since, $\pav{{(D_1)}}(i) = \pav{D}(i) \cap \pav{D}(i+1)$, Equation~\eqref{eq:wkghibwg} is equivalent to \begin{align}\label{eq:rkigwhbyg}
		 D_d(i)= \pav{{(D_1)}}(i) \cap \ldots \cap \pav{{(D_1)}}(i+d-1).
	\end{align}
    
	Equation~\eqref{eq:rkigwhbyg} shows that the $d$ consecutive points $D_d(i), \ldots, D_d(i+d-1)$ are all contained in $\pav{{(D_1)}}(i+d-1)$ and thus are also contained in $\mathcal{Q}$, which proves statement~3.

	Since each $\pav{{D_1}}(i)$ equals $\pav T(i+1)$ and is thus contained in $\pav P(i+1)$, we have that 
	\begin{align}\label{eq:aksrbg}
	   D_d(i) = \pav T(i+1) \cap \ldots \cap \pav T(i+d),
	\end{align}
	is contained in 
	\begin{align}
		\pav P(i+1) \cap \ldots \cap \pav{P}(i+d). \label{eq:rgiour}
	\end{align}
	By applying Lemma~\ref{lem:expllaplacetransform} to $P$, the space \eqref{eq:rgiour} is the line $P_d(i) \vee P_d(i+1)$. This proves statement~4.

    By Lemma~\ref{lem:BSquadriclocalpatch}, each $\pav T(i)$ is a subset of $\pav P(i)$ that is conjugate to $\pav P(i)$. So, Equation~\eqref{eq:aksrbg} shows that $D_d(i)$ is conjugate to the hyperplane joining $\pav{P}(i+1), \ldots, \pav{P}(i+d)$. This hyperplane contains the points $P_{ d}(i+1-d), \ldots, P_{ d}(i+d)$. So, these points are also conjugate to $D_d(i)$, which proves statement~5.
\qed	
}

\begin{remark}\label{rem:autoconjugateothercharacterisation}
    In (smooth) differential geometry, autoconjugate curves of a non-degenerate quadric $\mathcal{Q} \subset \RP^{2d}$ can be characterised as smooth curves such that the polar hyperplane and the osculating hyperplane coincide for each point of the curve. This characterisation is non-trivial to discretise because the osculating hyperplanes of any discrete curve $ \gamma \colon \Z \to \R^{2d}$ are naturally associated to the edges rather than the vertices. For example, if $d=1$, then the osculating hyperplanes are tangent lines of $\gamma$ which are associated to edges rather than vertices. However, in Theorem~\ref{th:diaginquadric} we have the pair $D_{d}, P_{d}$ of curves, which enables us to naturally define osculating hyperplanes as follows. Consider the combined curve given by
    \begin{align}
        \dots,\  P_d(i),\ D_d(i),\  P_d(i+1),\  D_d(i+1),\  \dots
    \end{align}
    which makes combinatorically sense since $D$ is the diagonal intersection net of $P$, so $D_d(i)$ lives between $P_d(i)$ and $P_d(i+1)$. Moreover, due to Theorem~\ref{th:diaginquadric}, the point $D_d(i)$ is actually contained in the line $P_d(i) \vee P_d(i+1)$. Thus, we can also think of the combined curve as the curve $P_d$ with marked points on the edges given by $D_d$. Now, we are able to define the osculating hyperplanes of $P$ using the points of $D$, and the osculating hyperplanes of $D$ using the points of $P$ in a symmetric fashion. Specifically, we define the osculating hyperplane at $D_{d}(k)$ to be the join 
    \begin{align}
        P_{d}(k+1-d) \vee \ldots \vee P_{d}(k+d),
    \end{align}
    and symmetrically we define the osculating hyperplane at $P_{d}(k)$ to be
    \begin{align}
        D_{d}(k-d) \vee \ldots \vee D_{d}(k+d-1).\label{eq:dosculating}
    \end{align}
    Using these definitions, Theorem~\ref{th:diaginquadric} shows that the osculating hyperplane at $D_d(k)$ is indeed the polar hyperplane of $D_{d}(k)$.
    We claim without proof that one can readily show that for each vertex $P_{d} (k)$ the osculating hyperplane \eqref{eq:dosculating} is the polar hyperplane of $P_{d}(k)$. Thus, the combined curve is autoconjugate in a sense that is analogous to the smooth setup in a more immediate sense. However, note that $D_d$ (so half of the combined curve) is an autoconjugate curve in the sense of Definition~\ref{defn:autoconjugate}, while $P_d$ (the other half) is not. Of course, the whole remark applies analogously to the pair $D_{-d}, P_{-d}$.
\end{remark}

\subsection{A bijection between K{\oe}nigs $d$-grids and pairs of autoconjugate curves}

In this subsection we establish our third main result, which is a bijection between generic K{\oe}nigs $d$-grids and generic pairs of autoconjugate curves. We begin with two lemmas that establish necessary properties of generic curves (in the sense of Definitions~\ref{def:genericcurve} and \ref{def:genericpair}).

\begin{lemma} \label{lem:justanothergenericitylemma}
	Let $\gamma \colon \Z \to \RP^{n}$ be a generic curve. Then, for all $j, k,\ell$ with $\ell \leq k+1 \leq n$ we have
	\begin{align}
		\cap^{\ell}_{b=0} C_{(k)}(j+b) = C_{(k-\ell)}(j+\ell),
	\end{align}
	which is $(k-\ell)$-dimensional.
\end{lemma}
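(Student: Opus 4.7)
The plan is to prove both the set equality and the dimension claim simultaneously by induction on $\ell$, leveraging the genericity of $\gamma$ via the formula $\dim(A \cap B) = \dim A + \dim B - \dim(A \vee B)$.

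The easy containment is $C_{(k-\ell)}(j+\ell) \subseteq \bigcap_{b=0}^{\ell} C_{(k)}(j+b)$: this is immediate from the definition~\eqref{eq:osculatingkspace} since the vertices $\gamma(j+\ell), \gamma(j+\ell+1), \ldots, \gamma(j+k)$ that span $C_{(k-\ell)}(j+\ell)$ also appear among the spanning vertices of each $C_{(k)}(j+b)$ for $b = 0, \ldots, \ell$. So the work lies in matching dimensions.

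For the base case $\ell = 0$ the statement is trivial. For the inductive step, I would use the induction hypothesis to rewrite
\begin{align*}
\bigcap_{b=0}^{\ell} C_{(k)}(j+b) = C_{(k-\ell+1)}(j+\ell-1) \cap C_{(k)}(j+\ell),
\end{align*}
where the first factor has dimension $k-\ell+1$ by induction and the second has dimension $k$ by genericity. The key observation is that their join is
\begin{align*}
C_{(k-\ell+1)}(j+\ell-1) \vee C_{(k)}(j+\ell) = C_{(k+1)}(j+\ell-1),
\end{align*}
since together these two spaces are spanned by $\gamma(j+\ell-1), \gamma(j+\ell), \ldots, \gamma(j+k+\ell)$. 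The assumption $k+1 \leq n$ (together with genericity of $\gamma$) ensures that $C_{(k+1)}(j+\ell-1)$ has dimension $k+1$. The dimension formula then yields
\begin{align*}
\dim \bigl(C_{(k-\ell+1)}(j+\ell-1) \cap C_{(k)}(j+\ell)\bigr) = (k-\ell+1) + k - (k+1) = k-\ell.
\end{align*}
Combined with the easy inclusion and the fact that $C_{(k-\ell)}(j+\ell)$ is $(k-\ell)$-dimensional (again by genericity, using $k-\ell \leq n$ which follows from $\ell \leq k+1 \leq n$), we conclude equality.

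I do not expect any genuine obstacle. The only subtlety is bookkeeping: one must verify that every osculating space invoked (notably $C_{(k+1)}(j+\ell-1)$ and $C_{(k-\ell)}(j+\ell)$) has the dimension guaranteed by Definition~\ref{def:genericcurve}, which is exactly what the hypothesis $\ell \leq k+1 \leq n$ guarantees.
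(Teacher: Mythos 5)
Your proof is correct and follows essentially the same route as the paper: induction on $\ell$, using the genericity fact that the relevant join is a full osculating space of known dimension to pin down the dimension of the intersection. The only cosmetic difference is the decomposition in the inductive step — the paper splits $\bigcap_{b=0}^{\ell}$ into the two overlapping sub-intersections $\bigcap_{b=0}^{\ell-1}$ and $\bigcap_{b=1}^{\ell}$ and applies the induction hypothesis to both, whereas you peel off the last factor $C_{(k)}(j+\ell)$ and run the dimension formula explicitly; both variants rest on the same observation.
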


\proof{
	Without loss of generality we assume $j=0$.
	The base case 
	\begin{align}
		C_{(k)}(0) \cap  C_{(k)}(1) = C_{(k -1)}(1),
	\end{align}
	follows immediately from the genericity assumption that
	\begin{align}
		C_{(k)}(0) \vee  C_{(k)}(1) = C_{(k + 1)}(0).
	\end{align}
	By induction on $\ell$, we get that 
	\begin{align}
		\cap^{\ell-1}_{b=0} C_{(k)}(b) = C_{(k-\ell+1)}(\ell-1), \qquad \cap^{\ell}_{b=1} C_{(k)}(b) = C_{(k-\ell+1)}(\ell).	
	\end{align}
	Using genericity again, we get 
	\begin{align}
		\cap^{\ell}_{b=0} C_{(k)}(b) = C_{(k-\ell+1)}(\ell-1) \cap C_{(k-\ell+1)}(\ell) = C_{(k-\ell)}(\ell).
	\end{align}
	\qed
}

Given a generic pair of autoconjugate curves $\sigma, \tau$ of a non-degenerate quadric in $\RP^{2d}$, they generate a sequence of maps
\begin{align}
P_k \colon \Z^2 &\to \RP^{2d}, \quad
	P_k(i,j) := S^\perp_{(d-k-1)}(j+k) \cap T^\perp_{(d+k-1)}(i), \label{eq:explicitpk}
\end{align}
for all $k$ such that $0 \leq k \leq d$. An immediate consequence of the genericity condition (polar version of Equation~\eqref{eq:autogendim}) is that each $P_{k}(i,j)$ is a point. Note that we use the convention that the polar of the empty set is the entire space $\RP^{2d}$. Then, \eqref{eq:explicitpk} also makes sense for $k=d$. In Theorem~\ref{thm:pairauutoconjugategivesgenericBSKoenigsgrid}, we will show that each $P_k$ is a Q-net and that $P_{k+1}$ is a Laplace transform of $P_{k}$. Then, the notation $P_k$ is justified.

\begin{lemma}\label{lem:autogenextensivity}
	Consider a generic pair of autoconjugate curves $\sigma, \tau \colon \Z \to \RP^{2d}$ and the generated sequence of maps $P_{k}$.	Then for all $i,j,k,l$ with $k \leq d$ and $l \leq d-k$,
\begin{align}\label{eq:dfnjjstj}
\bigvee^{l}_{b=0}P_k(i,j+b) = T^\perp_{(d+k-1)}(i) \cap S^\perp_{(d-k-l-1)}(j+k+l).
\end{align}
Moreover, $\dim \bigvee_{b=0}^l P_k(i,j+b) = l$.
\end{lemma}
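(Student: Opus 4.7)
The plan is to prove the formula by induction on $l$. The base case $l=0$ is immediate since both sides reduce to the point $P_k(i,j)$ as defined in Equation~\eqref{eq:explicitpk}.

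For the inductive step, the first task is to establish the containment $\bigvee_{b=0}^l P_k(i,j+b) \subseteq T^\perp_{(d+k-1)}(i) \cap S^\perp_{(d-k-l-1)}(j+k+l)$. Each $P_k(i,j+b)$ lies in $T^\perp_{(d+k-1)}(i)$ by construction, so the join does too. Moreover, for $0 \leq b \leq l$ the vertex set $\{\sigma(j+k+l),\ldots,\sigma(j+d-1)\}$ of $S_{(d-k-l-1)}(j+k+l)$ is contained in the vertex set $\{\sigma(j+k+b),\ldots,\sigma(j+b+d-1)\}$ of $S_{(d-k-1)}(j+k+b)$, which gives $P_k(i,j+b) \in S^\perp_{(d-k-1)}(j+k+b) \subseteq S^\perp_{(d-k-l-1)}(j+k+l)$. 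Using the polar duality identity $(X \cap Y)^\perp = X^\perp \vee Y^\perp$ together with Equation~\eqref{eq:autogendim}, the right-hand side has dimension $2d - ((d+k-1) + (d-k-l-1) + 1) - 1 = l$.

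The delicate step is to show that the left-hand side also has dimension $l$, which together with the containment forces equality. By the inductive hypothesis, $\bigvee_{b=0}^{l-1} P_k(i,j+b) = T^\perp_{(d+k-1)}(i) \cap S^\perp_{(d-k-l)}(j+k+l-1)$ has dimension $l-1$. I then verify that $P_k(i,j+l)$ does not lie in this subspace. If it did, it would belong to $S^\perp_{(d-k-1)}(j+k+l) \cap S^\perp_{(d-k-l)}(j+k+l-1) = S^\perp_{(d-k)}(j+k+l-1)$, where the equality is obtained by polarising the union-of-vertex-sets identity $S_{(d-k-1)}(j+k+l) \vee S_{(d-k-l)}(j+k+l-1) = S_{(d-k)}(j+k+l-1)$. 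Combined with $P_k(i,j+l) \in T^\perp_{(d+k-1)}(i)$, this would place a point inside $T^\perp_{(d+k-1)}(i) \cap S^\perp_{(d-k)}(j+k+l-1)$. However, the top-dimensional case of Definition~\ref{def:genericpair} gives $\dim T_{(d+k-1)}(i) \vee S_{(d-k)}(j+k+l-1) = 2d$, so this intersection is empty -- a contradiction. Hence $\dim \bigvee_{b=0}^l P_k(i,j+b) = l$, and combined with the containment, equality of the two sides follows.

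The main obstacle is aligning the indices so that the boundary case of Definition~\ref{def:genericpair} (where the join $T \vee S$ fills all of $\RP^{2d}$) is precisely what rules out $P_k(i,j+l)$ from the previous join. Once this alignment is recognised, the proof reduces to a mechanical application of polar duality and dimension counting.
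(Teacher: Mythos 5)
Your proof is correct and follows essentially the same route as the paper's: induction on $l$, containment of the join in a polar intersection, and a dimension count driven by the genericity of the pair of curves. The only differences are cosmetic --- you obtain the containment by direct vertex-set nesting instead of invoking Lemma~\ref{lem:justanothergenericitylemma} on the joined polars, and you verify the dimension increment by showing the new point would land in the empty intersection $T^\perp_{(d+k-1)}(i) \cap S^\perp_{(d-k)}(j+k+l-1)$ (the boundary case of Definition~\ref{def:genericpair}), whereas the paper intersects the two shifted $(l-1)$-joins to reach an $(l-2)$-dimensional space; both contradictions rest on the same genericity hypothesis.
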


\proof{
	Without loss of generality we assume $i=j=0$. The base case $l=0$ holds since each 
    \begin{align}
        P_k(0,0) = T^\perp_{(d+k-1)}(0) \cap S^\perp_{(d-k-1)}(k)    
    \end{align}
    is a point, as was explained earlier. By induction, we assume the case $l-1$ and prove the case $l>1$.	
	By our induction hypothesis, the joins
	\begin{align}\label{eq:rwhrwh}
		\bigvee^{l-1}_{b=0} P_k(0,b) \quad \text{ and } \qquad \bigvee^{l}_{b=1} P_k(0,b)
	\end{align}
	are $(l-1)$-dimensional.
	Using Equation~\eqref{eq:explicitpk} we have 
	\begin{align}
		\bigvee^{l-1}_{b=0}P_k(0,b) &= \bigvee^{l-1}_{b=0} \left( T^\perp_{(d+k-1)}(0) \cap S^\perp_{(d-k-1)}(b+k) \right)
		\subseteq T^\perp_{(d+k-1)}(0) \cap \left( \bigvee^{l-1}_{b=0} S^\perp_{(d-k-1)}(b+k)\right).\label{eq:thmoaergn}
	\end{align}

	By Lemma~\ref{lem:justanothergenericitylemma}, 
	\begin{align}
		\bigcap^{l-1}_{b=0} S_{(d-k-1)}(b+k) = S_{(d-k-l)}(k+l-1).
	\end{align}
	Equivalently,
	\begin{align}
		\bigvee^{l-1}_{b=0} S^\perp_{(d-k-1)}(b+k) = S^\perp_{(d-k-l)}(k+l-1).		
	\end{align}	
	Substituting this into Equation~\eqref{eq:thmoaergn} gives
	\begin{align}\label{eq:erhqerhqh}
		\bigvee^{l-1}_{b=0}P_k(0,b) = T^\perp_{(d+k-1)}(0) \cap S^\perp_{(d-k-l)}(k+l-1),
	\end{align}
	because our genericity assumption ensures that the right-hand side is $(l-1)$-dimensional, which is the same dimension as the subset on the left-hand side.
	Analogously,
	\begin{align}\label{eq:wrhbqrh}
		\bigvee^{l}_{b=1}P_k(0,b) = T^\perp_{(d+k-1)}(0) \cap S^\perp_{(d-k-l)}(k+l).
	\end{align}
	Looking for a contradiction, suppose that $\bigvee^{l}_{b=0}P_k(0,b)$ is not $l$-dimensional. Then -- using the induction assumption -- this join is $(l-1)$-dimensional and equals both spaces in \eqref{eq:rwhrwh}. Thus, from Equation~\eqref{eq:erhqerhqh} and Equation~\eqref{eq:wrhbqrh} we get
	\begin{align}
		\bigvee^{l}_{b=0}P_k(0,b) = T^\perp_{(d+k-1)}(0) \cap S^\perp_{(d-k-l)}(k+l-1) \cap S^\perp_{(d-k-l)}(k+l).
	\end{align}
	However, 
	\begin{align}
		S^\perp_{(d-k-l)}(k+l-1) \cap S^\perp_{(d-k-l)}(k+l) = S^\perp_{(d-k-l+1)}(k+l-1).
	\end{align}
	By our genericity assumption, 
	\begin{align}
		T_{(d+k-1)}(0) \vee S_{(d-k-l+1)}(k+l-1)	
	\end{align}
	is $(2d-l+1)$-dimensional since $l \geq 1$.
	 Equivalently, 
	\begin{align}
		T^\perp_{(d+k-1)}(0) \cap S^\perp_{(d-k-l+1)}(k+l-1)	
	\end{align}
	is $(l-2)$-dimensional. This leads to a contradiction because it cannot be both $(l-2)$-dimensional and $(l-1)$-dimensional. Therefore, $\bigvee^{l}_{b=0}P_k(0,b)$ is $l$-dimensional. From \eqref{eq:erhqerhqh}, we obtain \eqref{eq:dfnjjstj} by replacing $l-1$ with $l$. From \eqref{eq:dfnjjstj}, we see that the genericity assumption ensures that $\bigvee_{b=0}^l P_k(i,j+b)$ is $l$-dimensional.
	\qed
}

With these definitions, we are ready to give the bijection between autoconjugate curves and Kœnigs $d$-grids.

\begin{theorem}\label{thm:pairauutoconjugategivesgenericBSKoenigsgrid}
    Let $\sigma , \tau  \colon \Z \to \RP^{2d}$ be a generic pair of autoconjugate curves of a non-degenerate quadric $\mathcal{Q}$. Then, \begin{align} \label{eq:PijfromSandT}
    P(i,j) := S^\perp_{(d-1)}(j) \cap T^\perp_{(d-1)}(i)
    \end{align} determines a generic K{\oe}nigs $d$-grid $P \colon \Z^2 \to \RP^{2d}$. For $k \in \Sigma_{d}$, the Laplace transforms $P_{\pm k}$ exist and they are determined by the formulas
    \begin{align}
        P_{k}(i,j) &= S^\perp_{(d-k-1)}(j+k) \cap T^\perp_{(d+k-1)}(i), \label{eq:laplacetransormPk}\\
        P_{-k}(i,j) &= T^\perp_{(d-k-1)}(i+k) \cap S^\perp_{(d+k-1)}(j). \label{eq:laplacetransormPminusk}
    \end{align}
\end{theorem}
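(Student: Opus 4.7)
The plan is to verify four things in turn: (i) $P$ is well-defined with $d$-dimensional parameter spaces; (ii) $P$ is a Q-net whose Laplace transforms satisfy the stated formulas; (iii) $P$ admits an instance of touching conics and is therefore a K{\oe}nigs net; (iv) this K{\oe}nigs $d$-grid is generic. Throughout, the main computational tools will be Lemma~\ref{lem:autogenextensivity} and Lemma~\ref{lem:justanothergenericitylemma}, both used in their stated form for $\sigma$ and in the analogues obtained by exchanging the roles of $\sigma$ and $\tau$ (together with $i$ and $j$), which is legitimate because Definition~\ref{def:genericpair} is symmetric in the two curves.

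For (i), Equation~\eqref{eq:autogendim} gives $\dim\bigl(S_{(d-1)}(j) \vee T_{(d-1)}(i)\bigr) = 2d-1$, so $P(i,j) = S^\perp_{(d-1)}(j) \cap T^\perp_{(d-1)}(i)$ is a single point. Specialising Lemma~\ref{lem:autogenextensivity} to $k=0$, $l=d$ and using $S^\perp_{(-1)}(j+d) = \RP^{2d}$ yields $\bigvee_{b=0}^{d} P(i,j+b) = T^\perp_{(d-1)}(i)$; since each $P(i,j')$ lies in $T^\perp_{(d-1)}(i)$, this gives $\pav P(i) = T^\perp_{(d-1)}(i)$, a $d$-space. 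The symmetric version yields $\pah P(j) = S^\perp_{(d-1)}(j)$, so $P$ is a $d$-grid. For the Q-net property, set $Q_{ij} := S^\perp_{(d-2)}(j+1) \cap T^\perp_{(d-2)}(i+1)$. The containments $S_{(d-2)}(j+1) \subseteq S_{(d-1)}(j) \cap S_{(d-1)}(j+1)$, and analogously for $T$, imply that all four vertices of the quad at $(i,j)$ lie in $Q_{ij}$, while a polar-dimension count using~\eqref{eq:autogendim} shows $\dim Q_{ij} = 2$.

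For (ii), the plan is to establish the Laplace formulas by induction on $k$, with trivial base case. The inductive step applies Lemma~\ref{lem:autogenextensivity} with $l=1$ to rewrite the two $j$-joins $P_k(i,j) \vee P_k(i,j+1)$ and $P_k(i+1,j) \vee P_k(i+1,j+1)$, and then intersects over $i$ using $T^\perp_{(d+k-1)}(i) \cap T^\perp_{(d+k-1)}(i+1) = T^\perp_{(d+k)}(i)$, a direct consequence of Lemma~\ref{lem:justanothergenericitylemma} applied to $\tau$; the result is exactly the claimed formula~\eqref{eq:laplacetransormPk} for $P_{k+1}$. The analogous argument in the $i$-direction yields~\eqref{eq:laplacetransormPminusk}. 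Non-degeneracy of each intermediate $P_k$ is a by-product of the same dimension counts, so the Laplace sequence is well-defined at every step required.

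For (iii), I define $\mathcal C(i,j) := \mathcal Q \cap Q_{ij}$. Autoconjugacy of $\tau$ gives $T_{(d-1)}(i) \subseteq \mathcal Q$, so $\pav P(i) = T^\perp_{(d-1)}(i)$ is tangent to $\mathcal Q$ along the isotropic $(d-1)$-space $T_{(d-1)}(i)$; consequently every edge $\ell \subseteq \pav P(i)$ is tangent to $\mathcal Q$, and hence to $\mathcal C(i,j)$, at the single point $\ell \cap T_{(d-1)}(i)$. Since this tangency depends only on the edge and its containing parameter space, adjacent inscribed conics share tangency on any shared edge, giving the touching property required by Definition~\ref{thm:BSKoenigstouchingconics}; horizontal edges are handled symmetrically via $S_{(d-1)}(j)$. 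For (iv), $\Sigma_{d,d}$-extensivity follows from $\bigvee_{a=0}^{d} T^\perp_{(d-1)}(i+a) = \RP^{2d}$ by Lemma~\ref{lem:justanothergenericitylemma}, while nowhere Laplace degeneracy of $P_d$ follows from $P_d(i) = T^\perp_{(2d-1)}(i)$ combined with the $k=-1$ case of Definition~\ref{def:genericpair}, which forces $\dim T_{(2d)}(i) = 2d$ and hence $T_{(2d-1)}(i) \neq T_{(2d-1)}(i+1)$; the $P_{-d}$ case is symmetric. The most delicate point will be step (iii): although the autoconjugate/polar structure essentially forces the touching-conic construction to succeed, a careful polar-dimension audit will be needed to confirm that $\ell \cap T_{(d-1)}(i)$ is always exactly one point (neither empty nor positive-dimensional), and that the resulting conics $\mathcal C(i,j)$ are not among the degenerate diagonal configurations excluded by Remark~\ref{rem:degconics}.
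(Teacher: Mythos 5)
Your proposal is correct and follows essentially the same route as the paper: identifying $\pav{P}(i) = T^\perp_{(d-1)}(i)$ and $\pah{P}(j) = S^\perp_{(d-1)}(j)$ via Lemma~\ref{lem:autogenextensivity}, deriving the Laplace formulas by induction from the $l=1$ case together with $T^\perp_{(d+k-1)}(i)\cap T^\perp_{(d+k-1)}(i+1)=T^\perp_{(d+k)}(i)$, and exhibiting touching conics as $\mathcal Q$ intersected with the quad planes, with tangency supplied by the isotropic spaces $S_{(d-1)}(j)$ and $T_{(d-1)}(i)$. The one substantive detail you defer is the non-degeneracy of $P$ (no three vertices of a quad collinear), which the paper settles by supposing the edge-lines $P(i,j)\vee P(i+1,j)$ and $P(i,j)\vee P(i,j+1)$ coincide and intersecting both sides with $S^\perp_{(d-1)}(j)$ to force a contradiction between dimensions $1$ and $0$; your ``same dimension counts'' remark does cover this, but the argument needs to be written out.
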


\begin{proof}
Throughout the proof, let us denote
\begin{align}
    A_k(i,j) &:= P_{k}(i,j) \vee P_k(i+1,j), & B_k(i,j) &:= P_{k}(i,j) \vee P_k(i,j+1).
\end{align}

The genericity assumption ensures that each $P_k(i,j)$ is a point. By Lemma~\ref{lem:autogenextensivity}, for all $k \in \Sigma_{d-1}$ the join $B_{k}(i,j)$ is $1$-dimensional and  
\begin{align}\label{eq:lineerheh}
    B_{k}(i,j) &=  S^\perp_{(d-k-2)}(j+k+1) \cap T^\perp_{(d+k-1)}(i),
\end{align}
and with an index shift we also get
\begin{align}\label{eq:linestwhra}
    B_{k}(i+1,j)  &=  S^\perp_{(d-k-2)}(j+k+1) \cap T^\perp_{(d+k-1)}(i+1).
\end{align}
Now, we want to check that $P_{k+1}$ as given in the right-hand side in \eqref{eq:laplacetransormPk} is indeed the Laplace transform of $P_k$.
For this,
we substitute \eqref{eq:lineerheh} and \eqref{eq:linestwhra} into the defining equation of the Laplace transform, which shows that $P_{k+1}(i,j)$ should be
\begin{align}
     B_k(i,j) \cap B_{k}(i+1,j)  &= S^\perp_{(d-k-2)}(j+k+1)  \cap  T^\perp_{(d+k-1)}(i) \cap T^\perp_{(d+k-1)}(i+1).\label{eq:wrshrhs}
\end{align}
From the genericity of $\tau$ we obtain that
\begin{align}
    T^\perp_{(d+k)}(i) = T^\perp_{(d+k-1)}(i) \cap T^\perp_{(d+k-1)}(i+1).
\end{align}
Substituting this into \eqref{eq:wrshrhs} shows that \eqref{eq:laplacetransormPk} is the correct formula for $P_{k+1}(i,j)$. The proof of \eqref{eq:laplacetransormPminusk} is analogous.

Since the two lines in \eqref{eq:wrshrhs} intersect in a point, it follows that $P_k$ is a Q-net. In particular, $P=P_0$ is a Q-net. Next, we show that $P$ is non-degenerate. This requires that no three of the points $P(i,j), P(i+1,j), P(i+1,j+1), P(i,j+1)$ are collinear.
It suffices to show that $P(i,j), P(i+1,j), P(i,j+1)$ are not collinear -- the other cases are analogous. Specializing \eqref{eq:lineerheh} to $k=0$ we obtain
\begin{align}
    B(i,j) = S^\perp_{(d-2)}(j+1) \cap T^\perp_{(d-1)}(i). \label{eq:rsihnrelg}
\end{align}
Symmetrically, we get 
\begin{align}
        A(i,j) = S^\perp_{(d-1)}(j) \cap T^\perp_{(d-2)}(i+1). \label{eq:woguhwerig}
\end{align}
We already showed that each $B(i,j)$ is $1$-dimensional. Symmetrically, each $A(i,j)$ is also $1$-dimensional. Now, looking for a contradiction, suppose $P(i,j), P(i+1,j), P(i,j+1)$ are collinear. Then, the lines $A(i,j)$ and $B(i,j)$ are equal. So,
\begin{align}\label{eq:thqeh}
    S^\perp_{(d-1)}(j) \cap T^\perp_{(d-2)}(i+1) = S^\perp_{(d-2)}(j+1) \cap T^\perp_{(d-1)}(i).
\end{align} 
Intersecting both sides with $S^\perp_{(d-1)}(j)$ and using $S^\perp_{(d-1)}(j) \subset S^\perp_{(d-2)}(j+1)$, we obtain
\begin{align}\label{eq:fnbsftn}
S^\perp_{(d-1)}(j) \cap T^\perp_{(d-2)}(i+1)= S^\perp_{(d-1)}(j) \cap T^\perp_{(d-1)}(i).
\end{align}
The left-hand side of \eqref{eq:fnbsftn} is $1$-dimensional. However, the genericity assumption implies that the right-hand side of \eqref{eq:fnbsftn} is $0$-dimensional. This is a contradiction. Therefore, $P$ is a non-degenerate Q-net. 

By Lemma~\ref{lem:autogenextensivity}, $P$ has parameter lines that are at least $d$-dimensional. Moreover, each $\pav{P}(i)$ is contained in $T^\perp_{(d-1)}(i)$ which is $d$-dimensional. Therefore, each $\pav{P}(i)$ is $d$-dimensional and equals $T^\perp_{(d-1)}(i)$. Analogously, each $\pah{P}(j)$ is $d$-dimensional and equals $S^\perp_{(d-1)}(j)$. Thus, $P$ is a $d$-grid.

The join $\pav{P}(i) \vee \pah{P}(j)$ is $2d$-dimensional because our genericity assumption ensures that
\begin{align}
    S_{(d-1)}(j) \vee T_{(d-1)}(i)
\end{align}
is $(2d-1)$-dimensional so that $S^\perp_{(d-1)}(j) \cap T^\perp_{(d-1)}(i)$ is $0$-dimensional. Equivalently, $S^\perp_{(d-1)}(j) \vee T^\perp_{(d-1)}(i)$ is $2d$-dimensional. Therefore, $P$ is a $\Sigma_{d,d}$-extensive $d$-grid.

To show that $P$ is a generic $d$-grid, we need to show that $P_{\pm d}$ are nowhere Laplace degenerate. We have already shown that
\begin{align}
    P_d(i,j) = T_{(2d-1)}^\perp (i), \quad P_d(i+1,j) = T_{(2d-1)}^\perp (i+1). 
\end{align}
We have $P_d(i,j) \neq P_d(i+1,j)$  because $T_{(2d-1)}(i) \neq T_{(2d-1)}(i+1)$ since the genericity assumption says that $T_{(2d)}(i)$ is $2d$-dimensional. Therefore, $P_d$ is nowhere Laplace degenerate. Symmetrically, $P_{-d}$ is also nowhere Laplace degenerate.

It only remains to show that $P$ is a Kœnigs net. As we observed above, $\pah{P}(j)$ is the $d$-dimensional space $S_{(d-1)}^{\perp}(j)$, which contains the isotropic space $S_{(d-1)}(j)$. Moreover, since $\mathcal Q$ is non-degenerate, $S_{(d-1)}(j)$ is the restriction of $\pah{P}(j)$ to $\mathcal Q$.
Now, let 
\begin{align}
    E(i,j):= P(i,j) \vee P(i+1,j) \vee P(i,j+1)  .  
\end{align}
This is a $2$-space since $P$ is a non-degenerate Q-net. Let $\mathcal{C}(i,j):= \mathcal{Q}\cap E(i,j)$. We claim this is an inscribed conic of the quad $P(i,j), P(i+1,j), P(i+1,j+1), P(i,j+1)$. To see this, consider
\begin{align}
    S(i,j) := A(i,j) \cap S_{(d-1)}(j) = A(i,j) \cap \mathcal Q.
\end{align}
This intersection is $0$-dimensional because $A(i,j)$ is a line in $\pah{P}(j)$ that is not contained in $\pah{S}(j)$ (since $P$ is a $\Sigma_{d,d}$-extensive $d$-grid). Therefore, $S(i,j)$ is the touching point of $\mathcal C(i,j)$ on the line $A(i,j)$. Analogously, the touching point on the line $B(i,j)$ is the intersection point of $B(i,j)$ with $T_{(d-1)}(i)$. Thus, the inscribed conics $\mathcal{C}$ are touching conics. Finally, Definition~\ref{thm:BSKoenigstouchingconics} shows that $P$ is a Kœnigs net.
\end{proof}

\begin{theorem}\label{thm:genericBSKoenigsdgridgivesgenericpairautoconjugate}
    Let $P \colon \Z^2 \rightarrow \RP^{2d}$ be a generic Kœnigs $d$-grid. Let $\mathcal{Q}$ be the special inscribed quadric. Define $\sigma, \tau \colon \Z \to \RP^{2d}$ such that 
    \begin{align}
    \sigma (j) &:= \pah{S}(j) \cap \ldots \cap \pah{S}(j+d-1),\label{eq:sigmaformula} \\
    \tau(i) &:= \pav{T}(i) \cap \ldots \cap \pav{T}(i+d-1).\label{eq:tauformula}
    \end{align}
    The two curves $\sigma,\tau$ are a pair of generic autoconjugate curves of $\mathcal{Q}$.
\end{theorem}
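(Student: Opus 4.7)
The plan is to identify the curves $\tau$ and $\sigma$ with shifted $d$-th Laplace transforms of the diagonal intersection net $D$ of $P$; this reduces autoconjugacy to an application of Theorem~\ref{th:diaginquadric} and leaves the pair-genericity condition to a direct dimension count on the parameter spaces of $P$.

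First I would establish $\tau(i) = D_d(i-1)$ and $\sigma(j) = D_{-d}(j-1)$. From the proof of Theorem~\ref{th:diaginquadric} we have $\pav{(D_1)}(i) = \pav T(i+1)$, and a short induction (using that the equidimensional inclusions $\pav T(i+m) \subset \pav D(i+m-1) \cap \pav D(i+m)$ are forced to be equalities) gives
\begin{align}
	\bigcap_{l=0}^{k} \pav D(i+l) \;=\; \bigcap_{m=1}^{k} \pav T(i+m) \quad \text{for all } k \geq 1.
\end{align}
Taking $k=d$ and invoking Lemma~\ref{lem:expllaplacetransform} (whose hypotheses hold by Lemma~\ref{lem:Disadgrid} together with the genericity of $P$) yields $\tau(i) = D_d(i-1)$, and symmetrically $\sigma(j) = D_{-d}(j-1)$; in particular each $\tau(i)$ and each $\sigma(j)$ is a single point. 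Autoconjugacy then follows at once, since the osculating space $T_{(d-1)}(i) = \tau(i) \vee \dots \vee \tau(i+d-1)$ is the join of $d$ consecutive vertices $D_d(i-1), \dots, D_d(i+d-2)$ and hence lies in $\mathcal{Q}$ by statement~3 of Theorem~\ref{th:diaginquadric}; the argument for $\sigma$ is symmetric.

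The hard part is the pair-genericity condition $\dim\bigl(S_{(k)}(j) \vee T_{(2d-k-1)}(i)\bigr) = 2d$ for all $-1 \leq k \leq 2d$. To handle it I would use the tangency statement of Theorem~\ref{thm:BSKoenigsconstrainedtangentquadric}, which gives $\pav T(i)^\perp = \pav P(i)$ and $\pah S(j)^\perp = \pah P(j)$, so that
\begin{align}
	\tau(i)^\perp = \bigvee_{m=0}^{d-1} \pav P(i+m), \qquad \sigma(j)^\perp = \bigvee_{m=0}^{d-1} \pah P(j+m).
\end{align}
A further inductive computation (again relying on Lemma~\ref{lem:expllaplacetransform} applied to $D_1$) expresses $T_{(k)}(i) = \bigcap_{l=k}^{d-1}\pav T(i+l)$ for $0 \leq k \leq d-1$, with higher osculating spaces obtained by joining with successive $\tau$-vertices outside this range; an analogous formula holds for $S_{(k)}(j)$. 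Polar duality then rewrites the pair-genericity equalities as disjointness statements between explicit joins and intersections of $\pav P$ and $\pah P$ spaces, which are in turn controlled by Lemma~\ref{lem:expllaplacetransform} applied to $P$ together with the $\Sigma_{d,d}$-extensivity of $P$ and the nowhere-Laplace-degeneracy of $P_{\pm d}$. The dimension bookkeeping in this last step is mechanical but lengthy, and is the only substantive computation in the proof.
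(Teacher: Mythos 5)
Your identification $\tau(i)=D_d(i-1)$, $\sigma(j)=D_{-d}(j-1)$ is correct (the paper states exactly this after the theorem), but routing autoconjugacy through Theorem~\ref{th:diaginquadric} imports that theorem's extra hypothesis that $D_{\pm d}$ \emph{exist}, which is not part of Definition~\ref{def:genericgrid} and which you never verify (it requires non-degeneracy of each intermediate $D_k$ as a Q-net). This detour is also unnecessary: since $\tau(i),\dots,\tau(i+d-1)$ all lie in $\bigcap_{l}\pav T(\cdot)$-type intersections, they are all contained in the single space $\pav T(i+d-1)\subset\mathcal Q$, so $T_{(d-1)}(i)\subset\mathcal Q$ directly; this is what the paper does. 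Likewise the paper gets that $\tau(i)$ is a point not via $D_d$ but by polarizing: $\tau^\perp(i)=\pav P(i)\vee\dots\vee\pav P(i+d-1)$ is $(2d-1)$-dimensional by $\Sigma_{d,d}$-extensivity.

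The genuine gap is in the pair-genericity step, which you correctly identify as the hard part but then defer as ``mechanical but lengthy'' bookkeeping from extensivity. It is not: $\Sigma_{d,d}$-extensivity only controls configurations involving at most $d+1$ consecutive parameter spaces, whereas the condition $\dim\bigl(S_{(k)}(j)\vee T_{(2d-k-1)}(i)\bigr)=2d$ involves osculating spaces of total projective dimension $2d-1$ spread over up to $2d+1$ consecutive indices. The paper's mechanism, absent from your sketch, is to polarize the iterated Laplace transforms: Lemma~\ref{lem:expllaplacetransform} gives $P_k(i,j)$ as an intersection of joins of parameter spaces, whose polar is the hyperplane $P_k^\perp(i,j)=S_{(d-k-1)}(j')\vee T_{(d+k-1)}(i')$; the required $2d$-dimensionality of $S_{(d-k-1)}(j)\vee T_{(d+k)}(i)$ is then the statement that two consecutive such hyperplanes, $P_k^\perp(i+d-1,\cdot)$ and $P_k^\perp(i+d,\cdot)$, are \emph{distinct}, which is exactly where the existence of $P_{k+1}$ (for $k<d$) and the nowhere-Laplace-degeneracy of $P_{\pm d}$ (for $k=d$) enter. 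Without this identification the dimension count does not close, so as written the proposal does not establish Definition~\ref{def:genericpair}. You would also need to first establish that $\sigma$ and $\tau$ are individually generic (so that Lemma~\ref{lem:justanothergenericitylemma} applies to convert iterated intersections of the $\pah S,\pav T$ spaces into osculating spaces), which the paper does by a separate induction on $\sigma^\perp(j)\cap\dots\cap\sigma^\perp(j+k)$.
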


Before we give a proof of the theorem, let us point out that the pair of autoconjugate curves $\sigma, \tau$ coincides with the pair of Laplace transforms $D_{\pm d}$ of the diagonal intersection net $D$ of $P$. Indeed, comparing Equation~\eqref{eq:tauformula} from Theorem~\ref{thm:genericBSKoenigsdgridgivesgenericpairautoconjugate} with Equation~\eqref{eq:aksrbg}, we have that $D_{d}(i) = \tau(i+1)$. Analogously, $D_{-d}(j) = \sigma(j+1)$.

\begin{proof}[Proof of Theorem~\ref{thm:genericBSKoenigsdgridgivesgenericpairautoconjugate}]

From Corollary~\ref{cor:gridlargerspecial}, we obtain that $\mathcal Q$ is non-degenerate. Using that the polars of $\pah{S}(j)$ and $\pav{T}(i)$ are $\pah{P}(j)$ and $\pav{P}(i)$, respectively (by Lemma~\ref{lem:BSquadriclocalpatch}), we have that 
\begin{align}
    \sigma^\perp(j) = \pah{P}(j) \vee \ldots \vee \pah{P}(j+d-1), \label{eq:sigmafromp} \\ 
    \tau^\perp(i) = \pav{P}(i) \vee \ldots \vee \pav{P}(i+d-1),
\end{align}
which are both $(2d-1)$-dimensional since $P$ is $\Sigma_{d,d}$-extensive. Consequently, $\sigma(j)$ and $\tau(i)$ are points. Thus, $\sigma$ and $\tau$ are well defined.

Moreover, the osculating space
\begin{align}
    S_{(d-1)}(j) = \sigma(j) \vee \ldots \vee \sigma(j+d-1)
\end{align}
is contained in $\pah{S}(j+d-1)$, which -- by definition of the inscribed quadric $\mathcal Q$ -- is contained in $\mathcal{Q}$. So, each $S_{(d-1)}(j)$ is contained in $\mathcal{Q}$. Analogously, each $T_{(d-1)}(i)$ is contained in $\mathcal{Q}$. Therefore, $\sigma$ and $\tau$ are autoconjugate curves. It only remains to show that $\sigma$ and $\tau$ are a generic pair of curves.

Next, using induction on $k \in \Sigma_{d-1}$, we show that
\begin{align}\label{eq:rehewrh}
    \sigma^\perp(j) \cap \ldots \cap \sigma^\perp(j+k) = \pah{P}(j+k) \vee \ldots \vee \pah{P}(j+d-1),
\end{align}
which is then $(2d-k-1)$-dimensional by $\Sigma_{d,d}$-extensivity of $P$. The base case $k=0$ is \eqref{eq:sigmafromp}. By induction, 
\begin{align}
    \sigma^\perp(j) \cap \ldots \cap \sigma^\perp(j+k-1) &= \pah{P}(j+k-1) \vee \ldots \vee \pah{P}(j+d-1),\\
    \sigma^\perp(j+1) \cap \ldots \cap \sigma^\perp(j+k) &= \pah{P}(j+k) \vee \ldots \vee \pah{P}(j+d).
\end{align}
Hence, 

\begin{align}
    \sigma^\perp(j) \cap \ldots \cap \sigma^\perp(j+k) &= \left( \bigcap_{b=0}^{k-1} \sigma^\perp(j+b) \right) \bigcap  \left( \bigcap_{b=1}^{k} \sigma^\perp(j+b) \right)\\
     &= \left( \bigvee_{b=k-1}^{d-1} \pah P(j+b) \right) \bigcap \left(\bigvee_{b=k}^{d} \pah P(j+b) \right) \label{eq:ergkjengli}
\end{align}

By $\Sigma_{d,d}$-extensivity, $\pah{P}(j+k-1) \vee \ldots \vee \pah{P}(j+d-1)$ and $\pah{P}(j+k) \vee \ldots \vee \pah{P}(j+d)$ are distinct spaces that are both $(2d-k)$-dimensional and that are contained in the $(2d-k+1)$-dimensional space
\begin{align}
    \pah{P}(j+k-1) \vee \ldots \vee \pah{P}(j+d).
\end{align}
Therefore, their intersection \eqref{eq:ergkjengli} is the $(2d-k-1)$-space $\pah{P}(j+k) \vee \ldots \vee \pah{P}(j+d-1)$. This completes the proof of \eqref{eq:rehewrh}.

Next, we show that $\sigma$ (and by symmetric arguments $\tau$) is generic.
Note that the case $k=d-1$ of \eqref{eq:rehewrh} is
\begin{align} \label{eq:wiungrpieg}
    \pah{P}(j+d-1) = \sigma^\perp(j) \cap \ldots \cap \sigma^\perp(j+d-1).
\end{align}
By polarizing \eqref{eq:wiungrpieg} -- and using that $(\pah{P}(j+d-1))^\perp$ is $\pah{S}(j+d-1)$ due to Lemma~\ref{lem:BSquadriclocalpatch} --   we get that
\begin{align}
    S_{(d-1)}(j) := \sigma(j) \vee \ldots \vee \sigma(j+d-1)  =   \pah{S}(j+d-1),  \label{eq:blablibluzehn}
\end{align}
is $(d-1)$-dimensional since its polar is $d$-dimensional. Symmetrically, 
\begin{align}\label{eq:lirufgbwlirygb}
    T_{(d-1)}(i) := \tau(i) \vee \ldots \vee \tau(i+d-1) =    \pav{T}(i+d-1) 
\end{align}
is also $(d-1)$-dimensional.
Due to the $\Sigma_{d,d}$-extensivity of $P$, the dimension of 
\begin{align}
    \pah{P}(j+d-1) \cap \ldots \cap \pah{P}(j+d+k-1), \label{eq:thisspace}
\end{align}
is at least $d-k$ for any $k \in \Sigma_{d+1}$. Moreover, due to the genericity of $P$, (compare with Lemma~\ref{lem:expllaplacetransform}) the dimension is exactly $d-k$. The polar of \eqref{eq:thisspace} is 
\begin{align}
    \pah{S}(j+d-1) \vee \ldots \vee \pah{S}(j+d+k-1), \label{eq:rhwiqgubh}
\end{align}
which (using \eqref{eq:blablibluzehn}) is  
\begin{align}
    \sigma(j) \vee \ldots \vee \sigma(j+d+k-1) = S_{(d+k-1)}(j).
\end{align}
Since the dimension of \eqref{eq:thisspace} is $d-k$, the dimension of $S_{(d+k-1)}(j)$ is $d+k-1$.
By symmetric arguments, $T_{(d+k-1)}(i)$ is also $(d+k-1)$-dimensional. Therefore, we have shown that both $\sigma$ and $\tau$ are generic in the sense of Definition~\ref{def:genericcurve}. It only remains to show that the pair of curves is generic in the sense of Definition~\ref{def:genericpair}.

Since $P$ is a generic $d$-grid, its iterated Laplace transforms $P_{k}$ and $P_{-k}$ exist for all $k \in \Sigma_{d}$. By Lemma~\ref{lem:expllaplacetransform},
\begin{align}
    P_k(i,j) = \big( \pah{P}(j) \vee \ldots \vee \pah{P}(j+k)\big) \cap \big(  \pav{P}(i) \cap \ldots \cap \pav{P}(i+k) \big).
\end{align}
Polarising, we obtain 
\begin{align} \label{eq:rfhwethh}
    P_k^\perp(i,j) = \big( \pah{S}(j) \cap \ldots \cap \pah{S}(j+k)\big) \vee \big(  \pav{T}(i) \vee \ldots \vee \pav{T}(i+k) \big).
\end{align}
From \eqref{eq:blablibluzehn} we obtain (by an index shift) that $\pah{S}(j) = S_{(d-1)}(j-d+1)$. Combining this with Lemma~\ref{lem:justanothergenericitylemma}, which applies since $\sigma$ is generic, it follows that 
\begin{align}\label{eq:dfighgkyu}
    \pah{S}(j) \cap \ldots \cap \pah{S}(j+k) &= S_{(d-1)}(j-d+1) \cap \ldots \cap S_{(d-1)}(j-d+1+k) \\
    &= S_{(d-k-1)}(j-d+1+k).
\end{align}
Analogously, from \eqref{eq:lirufgbwlirygb} we obtain $\pav{T}(i) = T_{(d-1)}(i-d+1)$. Combining this with Lemma~\ref{lem:justanothergenericitylemma}, which applies since $\tau$ is generic, it follows that
\begin{align}\label{eq:sdgerwgh}
    \pav{T}(i) \vee \ldots \vee \pav{T}(i+k) &= T_{(d-1)}(i-d+1) \vee \ldots \vee T_{(d-1)}(i-d+1+k)\\
    &= T_{(d+k-1)}(i-d+1).
\end{align}
Inserting these two expressions into \eqref{eq:rfhwethh} we obtain 
\begin{align}\label{eq:lifbquqf}
    P_k^\perp (i,j) = S_{(d-k-1)}(j-d+1+k) \vee T_{(d+k-1)}(i-d+1).
\end{align}

Applying shifts to the indices in \eqref{eq:lifbquqf}, we obtain the following equations
\begin{align}\label{eq:lrigbqwkjg}
    S_{(d-k-1)}(j) \vee T_{(d+k-1)}(i) &= P^\perp_k(i+d-1, j+d-k-1), \\
    S_{(d-k-1)}(j) \vee T_{(d+k-1)}(i+1) &= P^\perp_k(i+d, j+d-k-1)\label{eq:asrkgnjr}
\end{align}
which are distinct hyperplanes because $P_k$ is nowhere Laplace degenerate since $P$ is a generic $d$-grid. Using that
\begin{align}
    T_{(d+k)}(i) = T_{(d+k-1)}(i) \vee T_{(d+k-1)}(i+1)
\end{align}
by the genericity of $\tau$, it follows that
\begin{align}\label{eq:iakhbglrehg}
    S_{(d-k-1)}(j) \vee T_{(d+k)}(i)
\end{align}
is $2d$-dimensional because it equals the join of the two distinct hyperplanes \eqref{eq:lrigbqwkjg} and \eqref{eq:asrkgnjr}. By symmetric arguments, using that $P_{-k}$ exists and is nowhere Laplace degenerate, we obtain that
\begin{align}\label{eq:rogjnqresef}
    S_{(d+k)}(j) \vee T_{(d-k-1)}(i)
\end{align} 
is $2d$-dimensional. Together, \eqref{eq:iakhbglrehg} and \eqref{eq:rogjnqresef} imply that 
\begin{align}\label{eq:sirdhgbrhgu}
    S_{(\ell)}(j) \vee T_{(2d-\ell-1)}(i)
\end{align} 
is $2d$-dimensional for all $\ell$ such that $-1 \leq \ell \leq 2d$. This shows that $\sigma, \tau$ are a generic pair of curves.
\end{proof}

Together, Theorem~\ref{thm:pairauutoconjugategivesgenericBSKoenigsgrid} and Theorem~\ref{thm:genericBSKoenigsdgridgivesgenericpairautoconjugate} prove our third main result (Theorem~\ref{thm:bijection}), that there is a bijection between generic autoconjugate curves and generic Kœnigs $d$-grids.

\section{Concluding Remarks}\label{sec:isothermic}

Let us finish by discussing a few directions of future research that we think are of interest.

\subsection{The family of inscribed quadrics}\label{section:conclusioninsrcibedquadrics}

Consider one quad of a Q-net. There is a 1-parameter family of inscribed conics. As already pointed out in Remark~\ref{rem:Koenigsconicssmoothdiscrete} it is not clear whether the $1$-parameter family of these conics has a counterpart in the smooth theory. Let us note though that the 1-parameter family of conics constitutes a dual pencil of conics.

More generally, let $P \colon \Sigma \to \RP^n$ be a $\Sigma_{a,b}$-extensive K{\oe}nigs net. We recall that by Theorem~\ref{thm:BSquadriclocalpatch}, it has a 1-parameter family of touching inscribed quadrics, each quadric is locally defined on a $\Sigma_{a,b}$ patch. One may ask the following question.

\begin{statement}
    For K{\oe}nigs nets, does the $1$-parameter family of touching inscribed quadrics have a counterpart in the smooth theory?
\end{statement}
It would also be interesting to understand whether there are further characterizations of the 1-parameter family of quadrics on the discrete level. For example, one could ask whether the family of quadrics is a dual pencil in the $\Sigma_{2,2}$ case.

\subsection{Isothermic nets}

Let us briefly discuss potential applications of our results to discrete isothermic surfaces. These nets are defined in \cite{bpdisosurfaces} as Kœnigs nets that are circular nets. By definition, a circular net is a discrete parametrized surface $\Z^2 \to \R^n$ such that, for each quad, the four points of the quad are contained in a circle. Circular nets with spherical parameter lines correspond to Q-nets in the M{\"o}bius quadric that have parameter spaces that are $3$-dimensional \cite{bobenkofairley2023circularnets}. If the circular net is isothermic, then the corresponding $Q$-net is also a K{\oe}nigs net. Therefore, circular nets with spherical parameter lines correspond to K{\oe}nigs $3$-grids that are inscribed in the M{\"o}bius quadric. Then, our results may have applications to discrete isothermic surfaces with two families of spherical parameter lines.  For instance, the combination of Theorem~\ref{th:bsgridincidence} and \cite[Proposition 4.23]{bobenkofairley2023circularnets} suggests that an analogue of Theorem~\ref{th:bsgridincidence} also holds for discrete isothermic surfaces with spherical curvature lines. However, the corresponding $3$-grids are typically not generic in the sense of Definition~\ref{def:genericgrid}. The parameter lines of such nets must be quite special curves. This leads to the following question:
\begin{statement}
	 Is there an efficient way to construct  all discrete isothermic nets with two families of spherical parameter lines?
\end{statement}

More generally, as mentioned in \cite{hoffmannSzewieczek2024isothermic}, it is an open problem to obtain a full description of isothermic nets with only one family of spherical parameter lines.

\subsection{Doubly periodic Kœnigs grids}

It may be interesting to understand the space of doubly periodic Kœnigs grids. In particular, using Theorem~\ref{thm:bijection} and the explicit form of the bijection given in Section~\ref{sec:autoconjugate}, it is clear that there is also a bijection between doubly periodic Kœnigs grids and pairs of periodic autoconjugate curves. Hence, one may ask the following question:
\begin{statement}
	What is the moduli space of discrete periodic autoconjugate curves of fixed length?
\end{statement}

\subsection{Kœnigs dual}
There is another characterization of Kœnigs nets in terms of the existence of a \emph{Kœnigs dual} \cite{BS2008DDGbook} -- which specializes to the \emph{Christoffel dual} for isothermic nets. The Kœnigs dual of a Kœnigs net is also a Kœnigs net, and the Kœnigs dual of a Kœnigs grid is also a Kœnigs grid. Hence, one may ask the following question:
\begin{statement}
	Is there a relation induced by the Kœnigs dual for inscribed quadrics and autoconjugate curves?
\end{statement}

\bibliographystyle{alpha}
\bibliography{main_laplacekoenigs}

\end{document}